\theoremstyle{plain}
\newtheorem{theo}{Theorem}[section]
\newtheorem{lemm}[theo]{Lemma}
\newtheorem{prop}[theo]{Proposition}
\newtheorem{coro}[theo]{Corollary}
\theoremstyle{definition}
\newtheorem{defi}[theo]{Definition}
\theoremstyle{remark}
\newtheorem{rema}[theo]{Remark}
\newcommand{\eps}{\epsilon}
\newcommand{\bfR}{\mathbf{R}}
\newcommand{\bb}[1]{\mathbb{#1}}
\newcommand{\HH}{\mathbb{H}}
\newcommand{\PP}{\mathbb{P}}
\newcommand{\QQ}{\mathbb{Q}}
\newcommand{\RR}{\mathbb{R}}
\newcommand{\CC}{\mathbb{C}}
\newcommand{\DD}{\mathbb{D}}
\newcommand{\ZZ}{\mathbb{Z}}
\newcommand{\cA}{\mathcal{A}}
\newcommand{\cC}{\mathcal{C}}
\newcommand{\cD}{\mathcal{D}}
\newcommand{\cH}{\mathcal{H}}
\newcommand{\cK}{\mathcal{K}}
\newcommand{\cL}{\mathcal{L}}
\newcommand{\cM}{\mathcal{M}}
\newcommand{\cN}{\mathcal{N}}
\newcommand{\cO}{\mathcal{O}}
\newcommand{\cP}{\mathcal{P}}
\newcommand{\uind}[1]{^{(#1)}}
\newcommand{\lind}[1]{_{(#1)}}
\newcommand{\lbind}[1]{_{[#1]}}
\newcommand{\lsta}{_{*}}
\newcommand{\sta}{^{*}}
\newcommand{\norm}[1]{\|#1\|}
\DeclareMathOperator{\id}{id}
\DeclareMathOperator{\gr}{gr}
\DeclareMathOperator{\SProj}{\bf{Proj}}
\DeclareMathOperator{\coh}{coh}
\DeclareMathOperator{\supp}{supp}
\newcommand{\de}{\partial}
\newcommand{\db}{\overline{\partial}}
\newcommand{\zbar}{\overline{z}}
\newcommand{\lap}{\Delta}
\DeclareMathOperator{\Dom}{Dom}
\DeclareMathOperator{\GL}{GL}
\DeclareMathOperator{\DR}{DR}
\DeclareMathOperator{\MHM}{MHM}
\DeclareMathOperator{\HM}{HM}
\newcommand{\bfu}{\mathbf{u}}
\DeclareMathOperator{\Poin}{car\acute{e}}
\begin{document}
	\thanks{The author was partially supported by NSF grant DMS-2001132.}
	
	\subjclass[2020]{14D07, 14F17, 14F25, 32J25}
	
	\author{Hyunsuk Kim}
	
	\address{Department of Mathematics, University of Michigan, 530 Church Street, Ann Arbor, MI 48109, USA}
	
	\email{kimhysuk@umich.edu}

	\begin{abstract} 
		We give an analytic proof of the Saito vanishing theorem using $L^{2}$-methods, by going back to the original idea for the proof of the Kodaira vanishing theorem.
	\end{abstract} 
	
	\title[$L^{2}$-approach to the Saito vanishing thoerem]{$L^{2}$-approach to the Saito vanishing thoerem}

	\maketitle
	
	\setcounter{tocdepth}{1}
	\tableofcontents 
	
	\section{Introduction}
    Along the way of developing the theory of mixed Hodge modules in \cite{saito1988modules,saito1990mixed}, Saito proved a vanishing theorem \cite{saito1990mixed}*{2.g} that can be understood as a far-reaching generalization of the Kodaira vanishing theorem, which plays a crucial role in algebraic geometry in characteristic 0. Almost all of the robust vanishing theorems involving ample line bundles are special cases of Saito vanishing. For example, if we apply the theorem to the trivial variation of Hodge structures on a smooth projective variety, then we recover the classical Kodaira--Akizuki--Nakano vanishing. Saito's vanishing  theorem concerns polarizable mixed Hodge modules for which key examples are provided by polarizable variations of Hodge structures. The theorem says the following:

    \begin{theo} \cite{saito1990mixed}*{2.g}
		Let $X$ be a complex projective variety and let $\cM \in \MHM(X)$ be a polarizable Hodge module on $X$. For an ample line bundle $\cL$ on $X$, we have
		$$ \HH^{l}\big( X, \gr_{p} \DR_{X}(\cM) \otimes \cL \big) = 0 \qquad \textnormal{for all } l > 0, p \in \ZZ.$$
	\end{theo}

    In this article, we provide a proof of Saito's vanishing theorem for mixed Hodge modules using analytic methods. We also mention that the proof works in a more general setup, by allowing complex coefficients, since our analytic method has nothing to do with the underlying $\QQ$-structure.\footnote{The author would like to thank Rui-jie Yang for pointing this out.} This is the setup of \textit{complex Hodge modules}, being developed in the mixed Hodge module project by Sabbah--Schnell \cite{SSMHMproject}. This is a generalized notion of Saito's Hodge modules which removes $\QQ$-perverse sheaves in the picture and replaces with certain distribution-valued sesquilinear pairing on $\cD$-modules.

    It is not a big surprise that the known proofs of the Saito vanishing theorem have some correspondence with the known proofs of the Kodaira vanishing theorem. One of the key ingredients of Saito's original approach in \cite{saito1990mixed}*{2.g} is the Artin--Grothendieck vanishing theorem, which is a perverse sheaf version of Andreotti--Frankel's result. This is a key input of Ramanujam's approach of Kodaira vanishing \cite{ramanujam1972remarks}, hence we view Saito's method as a generalization of his. There is another proof of Saito's vanishing theorem in \cite{schnell2016saito} (or  \cite{Schnell-Yang:2023higher}*{Theorem 4.5} for complex coefficients). This approach can be viewed as a cousin of Esnault--Viehweg's \cite{esnault1985logarithmic} which uses branched covering techniques and the degeneration of the Hodge to de Rham spectral sequence. We also mention a partial progress towards Saito vanishing using positive characteristic methods in \cite{arapura2019kodaira, arapura2019vanishing} which are in spirit, similar to the approach of Deligne--Illusie \cite{deligne1987relevements}. We mention that our method goes back to the very first proof of the Kodaira vanishing \cite{kodaira1953differential}, exploiting the positivity of the curvature of the ample line bundle.
	
	We describe our general strategy for the proof of Saito's vanishing theorem. First, we improve the result of \cite{deng2022vanishing} and prove a vanishing theorem (Theorem \ref{theo-Main-ample}) for a certain logarithmic de Rham complex associated to a variation of Hodge structures defined on the complement of a simple normal crossing (SNC) divisor. After that, we reduce Saito's vanishing theorem to Theorem \ref{theo-Main-ample} using the structure theory for complex Hodge modules. We summarize the three main advantages of this argument. First, the proof of Theorem \ref{theo-Main-ample} is established through analytic methods and does not require any results on Hodge modules or $\cD$-modules. Specifically, we only use the curvature formula for Hodge bundles and a careful analysis near the SNC divisor by studying the degeneration of Hodge structures. Second, Theorem \ref{theo-Main-ample} is slightly stronger; that is, it does not directly follow from Saito's vanishing theorem. In fact, analytic arguments naturally allow us to perturb a line bundle by an SNC divisor, leading to a Kawamata--Viehweg type vanishing theorem. Lastly, the general case of Saito vanishing almost immediately follows via the structure theory of Hodge modules and by resolving singularities.
	
	\subsection{Vanishing theorem for logarithmic de Rham complexes} We give a vanishing theorem for logarithmic de Rham complexes for complex variations of Hodge structures defined on the complement of an SNC divisor. This is a key ingredient towards the proof of Saito's vanishing theorem. We consider a compact Kähler manifold $\overline{X}$ and an SNC divisor $D = \sum_{i=1}^{\nu} D_{i}$ on $\overline{X}$. For a complex polarizable variation of Hodge structures $E$ on the open locus $X = \overline{X}\setminus D$, we consider the Deligne extension $E_{\alpha}$, for $\alpha \in \RR^{\nu}$, whose eigenvalues of the residue along $D_{i}$ lie inside the interval $[-\alpha_{i}, -\alpha_{i}+1)$. By the nilpotent orbit theorem, the graded pieces $E^{p,q}$ also extend to vector bundles $E_{\alpha}^{p,q}$ on $\overline{X}$. The graded piece of the logarithmic de Rham complex $\gr^{p} \DR_{(\overline{X}, D)} (E_{\alpha})$ is defined as follows:
	$$ \Big[ E_{\alpha}^{p,q} \to \Omega_{\overline{X}}^{1}(\log D) \otimes E_{\alpha}^{p-1, q+1} \to \cdots \to \Omega_{\overline{X}}^{n}(\log D) \otimes E_{\alpha}^{p-n, q+n} \Big] [n].$$
    Note that this complex lives in cohomological degree between $-n$ and $0$. In this situation, we have the following vanishing theorem:
	
	\begin{theo}\label{theo-Main}
		With the above notation, let $\cL$ be a line bundle on $\overline{X}$. Assume that $\cL + \sum_{i=1}^{\nu} \alpha_{i} D_{i}$ has a smooth hermitian metric whose curvature is positive semi-definite, and has at least $n-t$ positive eigenvalues at each point $x \in \overline{X}$. If $B$ is a nef line bundle on $\overline{X}$, then we have the following vanishing:
		$$ \HH^{l}\left( \overline{X} , \gr^{p} \Big(\DR_{(\overline{X}, D)}(E_{\alpha}) \Big) \otimes \cL \otimes B \right) = 0 \qquad \textnormal{ for all } l > t, p \in \ZZ.$$
	\end{theo}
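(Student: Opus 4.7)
The plan is to run a Kodaira-type curvature argument on the graded logarithmic Higgs-de Rham complex, after identifying its algebraic hypercohomology with $L^{2}$-cohomology on $X = \overline{X} \setminus D$. Since the graded piece of the flat connection $\nabla$ on $E_{\alpha}$ along the Hodge filtration is exactly the Kodaira-Spencer map $\theta$, the differential in $\gr^{p} \DR_{(\overline{X}, D)}(E_{\alpha})$ is a Higgs field, and Griffiths' curvature formula together with Schmid's asymptotic analysis of degenerating VHS supplies the right analytic input.

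I would begin by equipping $X$ with a complete Kähler metric $\omega$ of Poincaré type along $D$, each $E_{\alpha}^{p,q}$ with its Hodge metric (singular along $D$, with asymptotics controlled by the nilpotent orbit theorem), $\cL$ with the given smooth metric whose curvature is semipositive with at least $n-t$ positive eigenvalues, and $B$ with a metric whose curvature is bounded below by $-\varepsilon\omega$ for every $\varepsilon > 0$ (available by the nefness of $B$). A standard computation in the local model near $D$ shows that the $L^{2}$-condition singles out precisely the Deligne lattice $E_{\alpha}$, which is why the twist by $\sum \alpha_{i} D_{i}$ appears on the curvature side. A Dolbeault-type resolution of $\gr^{p} \DR_{(\overline{X}, D)}(E_{\alpha}) \otimes \cL \otimes B$ by fine sheaves of $L^{2}$-forms should then identify its hypercohomology with the space of harmonic representatives.

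The analytic heart of the proof is a Bochner-Kodaira-Nakano identity applied to such a harmonic $L^{2}$-form $u$ representing a class in degree $l$. The curvature operator that appears combines Griffiths' formula for $E_{\alpha}^{p,q}$ (in terms of $\theta$ and its adjoint), the curvatures of $\cL$, of $B$, and of $\Omega_{X}$, and the twisting by the Poincaré metric. The tautological flatness $\nabla^{2} = 0$ of the underlying local system, together with the bidegree decomposition, forces the $\theta$-contributions from the Hodge bundle and from the Higgs differential of the complex to combine favorably, so that the net curvature operator is controlled by that of $\cL + \sum \alpha_{i} D_{i}$, up to an $\varepsilon$-error from $B$. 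A Nakano-type argument then shows that the resulting pairing $\langle [i\Theta, \Lambda] u, u \rangle$ is strictly positive on forms of total degree $l > t$, forcing $u = 0$.

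The main technical obstacle, and the point where this argument should improve on \cite{deng2022vanishing}, is the boundary analysis near $D$. On the one hand, one must establish that the $L^{2}$-Dolbeault complex computes the correct hypercohomology, which requires a local $L^{2}$-Poincaré lemma compatible with the Deligne extension. On the other hand, in the BKN calculation the Hodge metric degenerates and $\theta$ has logarithmic poles along $D$; the fine asymptotics coming from Schmid's nilpotent and $SL_{2}$-orbit theorems will be needed to ensure that all error terms arising from integration by parts and smooth cutoffs are absorbed by the positivity of $\cL + \sum \alpha_{i} D_{i}$ and the completeness of $\omega$. Once these Higgs field estimates and the $L^{2}$-Poincaré lemma are in place, the vanishing follows directly from the BKN positivity above.
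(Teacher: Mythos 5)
Your plan follows essentially the same route as the paper: a fine resolution of the graded logarithmic de Rham complex by sheaves of $L^{2}$-forms with respect to a complete Poincar\'e-type metric, a Bochner--Kodaira--Nakano identity for the total operator $\db+\theta$ in which the negative curvature of the Hodge bundles cancels against the Higgs-field terms, Mochizuki's bound on $|\theta|$ near $D$, and the nilpotent orbit theorem to identify the $L^{2}$-condition with the Deligne lattice $E_{\alpha}$. The one step you leave implicit --- converting the stated positivity of $\cL+\sum_{i}\alpha_{i}D_{i}$ on $\overline{X}$ into positivity of $[i\Theta_{\cL\otimes B},\Lambda_{\omega}]$ on $X$ in total degree $>n+t$ --- is carried out in the paper by twisting the metric of $\cL$ by a weight $e^{-\xi}$ built from the norms of sections cutting out the $D_{i}$ (the same weight that makes the $L^{2}$-sheaves compute $\cP_{\alpha}E^{p,q}$), followed by a Girbau-type eigenvalue estimate.
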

	
	The curvature condition on $\cL + \sum_{i=1}^{\nu} \alpha_{i} D_{i}$ seems somewhat complicated, but a typical situation to get this condition is when the line bundle is the pull-back of an ample one by a smooth proper morphism $f \colon X \to A$, where $\dim A = n-t$. This is a similar generalization to that of Kodaira vanishing given by Girbau \cite{girbau1976theoreme}. For convenience, we also give a simpler statement which is algebraic and is closer to that of the classical Saito vanishing theorem.
	
	\begin{theo} \label{theo-Main-ample}
		With the same notation as in Theorem \ref{theo-Main}, if $\cL + \sum_{i=1}^{\nu} \alpha_{i} D_{i}$ is ample, then
		$$\HH^{l}\left( \overline{X} , \gr^{p} \Big(\DR_{(\overline{X}, D)}(E_{\alpha}) \Big) \otimes \cL \right) = 0 \qquad \textnormal{ for all } l > 0, p \in \ZZ.$$
	\end{theo}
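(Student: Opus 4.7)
The plan is to derive Theorem \ref{theo-Main-ample} as an immediate specialization of Theorem \ref{theo-Main}, with the choices $t = 0$ and $B = \mathcal{O}_{\overline{X}}$. All the analytic substance is already packaged inside the main theorem; the only real task is to translate the algebraic hypothesis into the curvature condition required there.

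Since the $\alpha_{i}$ are real, $\cL + \sum_{i=1}^{\nu} \alpha_{i} D_{i}$ is an $\RR$-Cartier class. Ampleness of such a class means precisely that it lies in the interior of the ample cone, which is equivalent to the existence of a smooth hermitian metric whose Chern curvature is a Kähler form. Explicitly, write $\cL + \sum_{i} \alpha_{i} D_{i} = \sum_{j} r_{j} A_{j}$ as a positive real combination of ample $\QQ$-Cartier classes $A_{j}$; by Kodaira's positivity criterion each $A_{j}$ admits (after rescaling) a smooth hermitian metric with strictly positive Chern curvature $\omega_{j}$, and $\sum_{j} r_{j} \omega_{j}$ is then a Kähler form representing $c_{1}(\cL + \sum_{i} \alpha_{i} D_{i})$. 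In particular the curvature is positive definite, so it has all $n$ eigenvalues positive at every point of $\overline{X}$ — this is exactly the condition in Theorem \ref{theo-Main} with $n - t = n$, i.e.\ $t = 0$. Note also that ampleness of a real class automatically forces $\overline{X}$ to be projective, so working in the compact Kähler category of Theorem \ref{theo-Main} is compatible with the algebraic setting of Theorem \ref{theo-Main-ample}.

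Invoking Theorem \ref{theo-Main} with this metric, with $t = 0$, and with the trivially nef line bundle $B = \mathcal{O}_{\overline{X}}$, we obtain
$$ \HH^{l}\bigl(\overline{X},\, \gr^{p}(\DR_{(\overline{X}, D)}(E_{\alpha})) \otimes \cL\bigr) = 0 \quad \textnormal{for all } l > 0 \textnormal{ and } p \in \ZZ, $$
which is exactly Theorem \ref{theo-Main-ample}. Since all of the $L^{2}$-theoretic and Hodge-theoretic work has been discharged by Theorem \ref{theo-Main}, there is no substantive obstacle at this stage; the only point meriting attention is the standard passage from algebraic ampleness of an $\RR$-divisor to the existence of a positively curved smooth hermitian metric, which is where the full strength of the main theorem's flexible curvature hypothesis is used.
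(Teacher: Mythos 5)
Your proposal is correct and matches the paper's own proof exactly: the paper also deduces Theorem \ref{theo-Main-ample} from Theorem \ref{theo-Main} by taking $B$ trivial and $t=0$. The extra paragraph you include on converting ampleness of the $\RR$-class $\cL + \sum_i \alpha_i D_i$ into a positively curved smooth hermitian metric is a standard point the paper leaves implicit, and it is handled correctly.
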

	{\it Proof.} Let $B$ be the trivial bundle. The ampleness condition corresponds to the case $t = 0$ in Theorem \ref{theo-Main}. \hfill{$\square$}
	
	As an immediate corollary, we obtain the vanishing theorem in \cite{huang2016logarithmic,AMPW}, which is a generalization of log-Nakano vanishing where we allow an extra perturbation by an SNC divisor, as a direct corollary of Theorem \ref{theo-Main-ample}.
	
	\begin{coro} \label{coro-classical}
		Let $X$ be a projective complex manifold of dimension $n$ and let $\cL$ be a line bundle and $D = \sum_{i=1}^{\nu}D_{i}$ be a reduced SNC divisor on $X$. If $\cL + \sum_{i=1}^{\nu}\alpha_{i} D_{i}$ is ample for some $0 \leq \alpha_{i} \leq 1$, then
		\begin{align*}
			H^{q}(X, \Omega_{X}^{p}(\log D) \otimes \cL) = 0 & \qquad \textnormal{for } p+q > n. 
		\end{align*}
	\end{coro}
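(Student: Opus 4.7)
The plan is to apply Theorem \ref{theo-Main-ample} to the trivial variation of Hodge structures $E = \CC_{X\setminus D}$ (pure of weight $0$ and bidegree $(0,0)$) on the complement $X\setminus D$, and then unwind the definitions. Because the local monodromy of $E$ around each $D_i$ is trivial, for any $\alpha_i\in[0,1)$ the residue of the connection is zero and lies in the prescribed interval $[-\alpha_i, -\alpha_i+1)$, so the Deligne extension is simply $E_\alpha = \cO_X$ with its standard flat connection, and the only nonvanishing Hodge graded piece is $E_\alpha^{0,0} = \cO_X$. Consequently, in the defining complex for $\gr^p \DR_{(X,D)}(E_\alpha)$, every term of the form $\Omega^k_X(\log D)\otimes E_\alpha^{p-k,-p+k}$ vanishes except the one with $k=p$, giving
\[
\gr^p \DR_{(X,D)}(E_\alpha) \;\cong\; \Omega^p_X(\log D)[n-p],
\]
i.e.\ the single sheaf $\Omega^p_X(\log D)$ placed in cohomological degree $p-n$.

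With this identification, the hypercohomology in Theorem \ref{theo-Main-ample} reduces to ordinary sheaf cohomology:
\[
\HH^l\bigl(X,\, \gr^p \DR_{(X,D)}(E_\alpha)\otimes \cL\bigr) \;=\; H^{l+n-p}\bigl(X,\, \Omega^p_X(\log D)\otimes \cL\bigr).
\]
Under the hypothesis that $\cL + \sum \alpha_i D_i$ is ample and $\alpha_i\in[0,1)$, the main theorem yields the vanishing of the left-hand side for all $l>0$; setting $q=l+n-p$, this is precisely $H^q(X,\Omega^p_X(\log D)\otimes \cL)=0$ for $p+q>n$.

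The one case not yet covered is when some $\alpha_i$ equals $1$, since the identification of the Deligne extension above requires $\alpha_i \in [0,1)$. I will handle this by a standard perturbation: because ampleness is an open condition in the real Néron--Severi space, replacing each $\alpha_i = 1$ by $\alpha_i - \epsilon$ for a sufficiently small $\epsilon>0$ preserves the ampleness of $\cL + \sum \alpha_i D_i$ while bringing all exponents into $[0,1)$, reducing to the previous case. The substantive content of the plan is the identification of the Deligne extension of the trivial VHS together with the bookkeeping of the shift $[n]$; everything else is a formal translation from the hypercohomology of a single-term complex to ordinary sheaf cohomology of its nonzero component.
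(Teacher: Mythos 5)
Your proposal is correct and follows exactly the paper's (one-line) proof: apply Theorem \ref{theo-Main-ample} to the trivial variation of Hodge structures on $X\setminus D$, for which $E_{\alpha}=\cO_{X}$ and $\gr^{p}\DR_{(X,D)}(E_{\alpha})\cong\Omega^{p}_{X}(\log D)[n-p]$, so that $l>0$ translates into $p+q>n$. Your explicit treatment of the boundary case $\alpha_{i}=1$ via openness of ampleness is a detail the paper leaves implicit, and it is handled correctly.
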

	{\it Proof.} Apply Theorem \ref{theo-Main-ample} for the trivial variation of Hodge structures on the complement $X \setminus D$. \hfill{$\square$}
	
	\subsection{Comparison with previous results}
	We now compare our result with several related previous results. The most similar results and ideas can be found in \cite{deng2022vanishing}. The main theorem of that paper discusses vanishing theorems for tame harmonic bundles with nilpotent residues. The key input of the proof is the bound for the Higgs field near the SNC divisor. Then one can build a resolution of the logarithmic de Rham complex by fine sheaves, which is essentially the Dolbeault resolution but allowing measurable coefficients with certain $L^{2}$-conditions. The chain map is given by $\db + \theta$ and the authors use appropriate $L^{2}$-estimates to solve the $(\db + \theta)$-equation.
	
	Theorem \ref{theo-Main} generalizes \cite{deng2022vanishing} in the case of a variation of Hodge structures. The main difference is that we get vanishing for a family of sheaves parameterized by $\RR^{\nu}$, and assuming some positivity for the line bundle with an $\RR$-twist $\cL + \sum_{i=1}^{\nu} \alpha_{i} D_{i}$. This can be done by a careful construction of the Kähler metric on the open locus and by relating certain sheaves given by $L^{2}$-conditions and the Deligne extension. The result in \cite{deng2022vanishing} can be viewed as the case when $\alpha = 0 \in \RR^{\nu}$. However, the statement in \cite{deng2022vanishing} is stronger since they work with tame harmonic bundles with nilpotent residues, a more general notion than variations of Hodge structures. However, we would like to stress that allowing a perturbation of $\alpha$ is extremely important towards the proof of the Saito vanishing since it allows us to resolve singularities and use the standard perturbation trick to get positivity of the line bundle on the resolution.

    We also compare our results with those in \cite{PopaVanishing}, \cite{SuhSaito}, and \cite{Wu22vanishing} which are Kawamata--Viehweg type generalizations of Saito vanishing. The results in \cite{PopaVanishing}*{\S 11} are somewhat orthogonal to ours since the author considers the situation when the line bundle $\cL$ is big and nef, while we discuss the issue of perturbing by an extra SNC divisor. The result in \cite{SuhSaito} is closely related to ours since it considers vanishing of a logarithmic de Rham complex when we have a perturbation by an SNC divisor. The main result \cite{SuhSaito}*{Theorem 1} can be deduced from Theorem \ref{theo-Main-ample} of ours. The author shows the assertion when $\alpha = -(\eps, \ldots, \eps)$ (with appropriate duality applied), under the assumption that $\cL$ is big and nef, and $\cL + \sum_{i=1}^{\nu} \alpha_{i} D_{i}$ is ample for all $0 < -\alpha_{i} \ll 1$. The assumption of our result is weaker in the sense that we only need a single $\alpha$ with $E_{\alpha} = E_{-\eps}$ and $\cL + \sum_{i=1}^{\nu} \alpha_{i} D_{i}$ is ample and do not impose any positivity condition on $\cL$ itself. The vanishing result in \cite{Wu22vanishing}*{Theorem 1.5} generalizes \cite{PopaVanishing} and \cite{SuhSaito} simultaneously. The theorem shows a vanishing result for the lowest graded piece, under a weaker positivity condition, i.e., assuming that $\cL + \sum \alpha_{i} D_{i}$ is big and nef. In general, vanishings for the higher graded pieces are no longer true for big and nef line bundles. We also mention a related result for twistor $\cD$-modules by \cite{wei2022kodaira}.

    {\bf Acknowledgements.} The author would like to thank his advisor Mircea Musta\c{t}\u{a} for generous support and guidance, Qianyu Chen and Brad Dirks for helpful suggestions and answering questions on Hodge modules. The author would also like to thank Christian Schnell on helpful comments and conversations, and Ya Deng for comments and for pointing out some misunderstandings on harmonic bundles. Lastly, we thank Ruijie Yang for pointing out that the vanishing theorem would work for polarizable complex Hodge modules.
	
	\section{Preliminaries}
	\subsection{Complex Polarized Variation of Hodge Structures} \label{sec-CPVHS}
	We begin by reviewing some basic definitions and notation related to complex variations of Hodge structures. Let $E$ be a smooth vector bundle on a complex manifold $X$. We denote by $\cA_{X}^{r,s}(E)$ the sheaf of smooth differentiable $(r, s)$-forms with values in $E$. A complex variation of Hodge structures on $X$ with weight $k$ is a smooth vector bundle $E$ on $X$ with a decomposition
	$$ E = \bigoplus_{p + q = k} E^{p,q}$$
	by smooth vector bundles and a flat connection $\nabla \colon E \to \cA_{X}^{1} (E)$ that maps each $\cA_{X}(E^{p,q})$ into
	$$ \cA_{X}^{1,0} (E^{p,q}) \oplus \cA_{X}^{1,0} (E^{p-1, q+1}) \oplus \cA_{X}^{0,1} (E^{p,q}) \oplus \cA_{X}^{0,1} (E^{p+1, q-1}).$$
	We define the filtration on $E$ as $F^{p}E = \bigoplus_{p' \geq p} E^{p', k-p'}$. The four components of the connection extend naturally to four operators
	\begin{align*}
		&\nabla^{1,0} \colon \cA_{X}^{r,s} (E^{p,q}) \to \cA_{X}^{r+1, s}(E^{p,q}) &\theta : \cA_{X}^{r,s} (E^{p,q}) \to \cA_{X}^{r+1, s} (E^{p-1,q+1} ) \\
		& \db \colon \cA_{X}^{r,s}(E^{p,q}) \to \cA_{X}^{r, s+1} (E^{p,q}) &  \varphi \colon \cA_{X}^{r, s} (E^{p,q}) \to \cA_{X}^{r, s+1} (E^{p+1, q-1}).
	\end{align*}
	A polarization on $E$ is a sesquilinear pairing $Q \colon E \otimes \overline{E} \to \cA_{X}$ such that 
	\begin{itemize} 
		\item $Q$ is compatible with $\nabla$ in the sense that
		$$ \nabla Q(u, v) = Q(\nabla u , v) + Q(u, \nabla v)$$
		for all smooth sections $u, v$ of $E$, and
		\item The summands $E^{p,q}$ of the decomposition are mutually orthogonal to each other, and
		\item $h(v, w) = \sum_{p+q = k} (-1)^{q} Q(v^{p,q} , w^{p,q})$ is positive definite.
	\end{itemize}
	We denote the hermitian metric on $E$ by $h_{E}$. We will sometimes omit the subscript if there is no possibility for confusion. Note that the vector bundles $E$, $F^{p}E$, and $E^{p,q}$ have holomorphic structures by the $(0,1)$-part of the connection, but the decomposition $E = \bigoplus E^{p,q}$ does not preserve the holomorphic structures. However, $F^{\bullet} E$ are holomorphic subbundles of $E$. This is essentially because of the presence of the Higgs field, which is the operator $\theta$. We say that $(E, \nabla, F^{\bullet}, Q)$ is a complex polarized variation of Hodge structures if the above properties are satisfied. However, we will simply say that $E$ is a complex polarized variation of Hodge structures, without specifying the connection, filtration, and the hermitian paring if there is no possibility for confusion. By the flatness of connection $\nabla^{2} = 0$ and by type analysis, we have the following identities.
	\begin{lemm} \label{lemm-flat-identity}
		The operators $\nabla^{1,0}$, $\db$, $\theta$, $\varphi$ satisfy the following identities:
		\begin{align*}
			& (\nabla^{1,0})^{2}  = \theta^{2}  = \db^{2} = \varphi^{2} = 0 \\
			& \nabla^{1,0} \theta + \theta \nabla^{1,0} = \db \varphi + \varphi\db = 0 \\
			& \db \theta + \theta \db = \nabla^{1,0} \varphi + \varphi \nabla^{1,0} = 0 \\
			& \nabla^{1,0} \db + \db \nabla^{1,0} + \theta \varphi + \varphi \theta = 0.
		\end{align*} 
	\end{lemm}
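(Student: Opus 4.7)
The plan is to expand the flatness equation $\nabla^{2}=0$ and separate the result according to bidegree and Hodge shift. Writing $\nabla = \nabla^{1,0} + \theta + \db + \varphi$ and squaring produces sixteen terms, which I would group by the two bigradings simultaneously carried by each operator: namely, the form bidegree $(r,s)$ and the Hodge shift $p \mapsto p+a$. Here $\nabla^{1,0}$ has form-bidegree $(1,0)$ with Hodge shift $0$, the Higgs field $\theta$ has bidegree $(1,0)$ with shift $-1$, $\db$ has bidegree $(0,1)$ with shift $0$, and $\varphi$ has bidegree $(0,1)$ with shift $+1$.

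Next, I would observe that the four subspaces on the right-hand side of the definition of a VHS are linearly independent inside $\cA_{X}^{1}(E)$, and more generally the decomposition of $\cA_{X}^{r,s}(E)$ according to $(p,q)$ is a direct sum. Thus any sum of operators mapping into distinct $(r,s;p,q)$-pieces can only vanish if each individual operator vanishes. Applying this to $\nabla^{2}=0$ yields one identity for each admissible pair (form bidegree, Hodge shift) of the composition.

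Carrying out the bookkeeping gives exactly nine identities: the pure pieces $(\nabla^{1,0})^{2}$ (bidegree $(2,0)$, shift $0$), $\theta^{2}$ (bidegree $(2,0)$, shift $-2$), $\db^{2}$ (bidegree $(0,2)$, shift $0$), $\varphi^{2}$ (bidegree $(0,2)$, shift $+2$) must each vanish; the mixed pieces $\nabla^{1,0}\theta+\theta\nabla^{1,0}$ (bidegree $(2,0)$, shift $-1$) and $\db\varphi+\varphi\db$ (bidegree $(0,2)$, shift $+1$) must vanish; similarly $\db\theta+\theta\db$ (bidegree $(1,1)$, shift $-1$) and $\nabla^{1,0}\varphi+\varphi\nabla^{1,0}$ (bidegree $(1,1)$, shift $+1$) each vanish; finally the two $(1,1)$-shift-$0$ contributions $\nabla^{1,0}\db+\db\nabla^{1,0}$ and $\theta\varphi+\varphi\theta$ share the same bidegree and Hodge shift, so they are not individually forced to vanish, but their sum is. These are precisely the four lines of the lemma.

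There is no real obstacle here: the proof is essentially a bookkeeping exercise, and the only subtlety is to notice that the last displayed identity is the unique one in which two different compositions share the same bidegree-and-shift type and therefore appear combined, which explains why it has four terms rather than two.
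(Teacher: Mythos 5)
Your proposal is correct and is precisely the argument the paper intends: the lemma is stated as a consequence of ``the flatness of connection $\nabla^{2}=0$ and type analysis,'' which is exactly your decomposition of the sixteen terms of $\nabla^{2}$ by form bidegree and Hodge shift. Your observation that $\nabla^{1,0}\db+\db\nabla^{1,0}$ and $\theta\varphi+\varphi\theta$ share the same type and hence only their sum is forced to vanish correctly accounts for the four-term final identity.
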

	
	Here are some more basic facts that will be used later. The proofs can be found in a self-contained manner in \cite{sabbah2022degenerating}*{\S 47 - \S 50}.
	\begin{lemm} \label{lemm-curvature}
		\begin{enumerate} \itemsep -2pt
			\item For all smooth sections $u , v \in E$, we have
			$$ h_{E}(\theta u, v) =h_{E}(u, \varphi v).$$
			\item For each $E^{p,q}$, the connection $\nabla^{1,0} + \db$ is the metric connection with respect to $h$.
			\item The curvature of the hermitian bundle $E^{p,q}$ is equal to $- (\theta \varphi + \varphi \theta)$. \hfill{$\square$}
		\end{enumerate}
	\end{lemm}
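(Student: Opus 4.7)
The argument rests on two inputs: the compatibility of $\nabla$ with the polarization $Q$ (hence with the Hodge metric $h$), and the type decomposition $\nabla=\nabla^{1,0}+\db+\theta+\varphi$, where $\theta$ shifts Hodge bidegree by $(-1,+1)$ and $\varphi$ by $(+1,-1)$. Throughout I would use the orthogonality of distinct Hodge summands $E^{p,q}$ with respect to $h$, which is built into its definition. I would prove the three parts in the order $(2)\Rightarrow(1)\Rightarrow(3)$, since $(3)$ becomes nearly immediate from Lemma~\ref{lemm-flat-identity} once part $(2)$ identifies the relevant connection.

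For part $(2)$, take smooth local sections $u,v$ of $E^{p,q}$ and compute $d\,h(u,v)$ via compatibility: $d\,h(u,v)=h(\nabla u,v)+h(u,\nabla v)$. Expanding $\nabla$ by type, the pieces $\theta u\in\cA^{1,0}(E^{p-1,q+1})$ and $\varphi u\in\cA^{0,1}(E^{p+1,q-1})$ lie in Hodge summands disjoint from $E^{p,q}$, so by orthogonality they do not contribute to $h(\nabla u,v)$; the same reduction applies to $h(u,\nabla v)$. Hence $\nabla^{1,0}+\db$ preserves $h$ on $E^{p,q}$, and since its $(0,1)$-component $\db$ defines the holomorphic structure, uniqueness of the Chern connection identifies it with the metric connection. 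For part $(1)$, apply the same compatibility identity to $u\in E^{p,q}$ and $v\in E^{p-1,q+1}$: because $u,v$ belong to distinct Hodge summands, $h(u,v)\equiv 0$, so $0=h(\nabla u,v)+h(u,\nabla v)$. Type matching isolates a single surviving term on each side: among the four pieces of $\nabla u$, only $\theta u$ lands in $E^{p-1,q+1}$, and among the four pieces of $\nabla v$, only $\varphi v$ lands in $E^{p,q}$. Accounting for the $(-1)^q$ weights in the definition $h=\sum(-1)^qQ(\cdot,\cdot)$ then yields the adjunction $h(\theta u,v)=h(u,\varphi v)$.

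For part $(3)$, part $(2)$ reduces the computation of the curvature of the hermitian holomorphic bundle $E^{p,q}$ to evaluating $(\nabla^{1,0}+\db)^2=(\nabla^{1,0})^2+\db^2+\nabla^{1,0}\db+\db\,\nabla^{1,0}$. The first two terms vanish by Lemma~\ref{lemm-flat-identity}, and the last identity of the same lemma rewrites the remaining bracket as $-(\theta\varphi+\varphi\theta)$, producing the asserted formula. The only real obstacle --- and it is a bookkeeping one --- is the sign tracking in part $(1)$: one has to interpret the compatibility of $\nabla$ with the sesquilinear pairing $Q$ carefully and convert the resulting $Q$-identity into an $h$-identity through the $(-1)^q$ weights so that the adjunction emerges with the correct sign. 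Once this is handled, parts $(2)$ and $(3)$ are formal consequences of Lemma~\ref{lemm-flat-identity} and the Hodge-type bookkeeping.
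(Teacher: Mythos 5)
The paper gives no proof of this lemma at all---it defers to \cite{sabbah2022degenerating}*{\S 47 - \S 50}---and your argument is precisely the standard one found there: orthogonality of the Hodge summands together with the $\nabla$-compatibility of $Q$ give (2) and (1), and (3) then falls out of Lemma \ref{lemm-flat-identity} applied to $(\nabla^{1,0}+\db)^{2}$. The proof is correct; the only nit is that your displayed identity $0=h(\nabla u,v)+h(u,\nabla v)$ in part (1) should be stated for $Q$ rather than $h$ (as you yourself acknowledge at the end---taken literally with $h$ it would yield $h(\theta u,v)=-h(u,\varphi v)$, since $\nabla$ is compatible with $Q$ but not with $h$, and it is exactly the $(-1)^{q}$ versus $(-1)^{q+1}$ discrepancy that flips the sign to the asserted adjunction).
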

	
	\begin{rema}
		We mention some notational differences from \cite{sabbah2022degenerating}. This is mainly because we wanted to reserve the notation $\theta\sta$ for the adjoint of the operator of $\theta$ when $X$ has a Kähler metric.
	\end{rema}
	
	\subsection{Bound for the Norm of Higgs field} \label{sec-Higgs-field-bound}
	We introduce some useful terminology to describe the analysis near a simple normal crossing divisor.
	\begin{defi}[Admissible Coordinates]
		Let $X$ be a complex manifold and let $D$ be a reduced simple normal crossing divisor on $X$. Let $x$ be a point of $X$. An admissible coordinate centered at $x$ is a tuple $(\Omega; z_{1}, \ldots, z_{n})$ where
		\begin{itemize}
			\item $\Omega$ is an open subset of $X$ containing $x$.
			\item $(z_{1},\ldots, z_{n})$ is a coordinate system on $\Omega$ centered at $x$, which gives a holomorphic isomorphism of $\Omega$ with $\Delta^{n} = \{ (\zeta_{1},\ldots, \zeta_{n}) \in \CC^{n} : |\zeta_{j}| < 1\}$.
			\item $D \cap \Omega$ is given by the equation $z_{1}\cdots z_{l} =0$ for some $l \leq n$.
		\end{itemize}
		We shall write $\Omega\sta = \Omega\setminus D$.
	\end{defi}
	We define a Poincaré type metric $\omega_{\Poin}$ on $(\Delta\sta)^{l} \times \Delta^{n-l}$ by
	$$ \omega_{\Poin} = i \sum_{j=1}^{l} \frac{dz_{j} \wedge d\zbar_{j}}{|z_{j}|^{2} (-\log |z_{j}|^{2})^{2}} + \sum_{k=l+1}^{n} i dz_{k} \wedge d\zbar_{k}. $$
    Here, $\Delta = \{ z \in \CC : |z|  <1 \}$ is the unit disk and $\Delta\sta = \Delta \setminus \{0 \}$ is the punctured disk. %We will shrink the radius if necessary.
	
	For a variation of Hodge structures $E$ on $(\Delta\sta)^{l} \times \Delta^{n-l}$, we have the following crucial norm estimate for its Higgs field $\theta$ proved in \cite{simpson1990harmonic} for the one-dimensional case and \cite{mochizuki2002asymptotic} for the general case. Its proof follows from a clever use of Ahlfors lemma.
	
	\begin{theo}[Mochizuki] \label{theo-HiggsBound}
		If $E$ is a variation of Hodge structures on $X = \overline{X} \setminus D$, then every point $x \in D$ has an admissible coordinate $(\Omega ; z_{1},\ldots, z_{n})$ centered at $x$ such that
		$$ | \theta|_{h, \omega_{\Poin}}^{2} \leq C$$
		holds on $\Omega\sta$ for some constant $C > 0$. \hfill{$\square$}
	\end{theo}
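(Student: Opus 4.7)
The plan is to follow the Simpson--Mochizuki strategy, which turns the desired estimate into an application of the Ahlfors--Schwarz lemma by exhibiting $|\theta|^{2}_{h, \omega_{\Poin}}$ as a function satisfying a favorable subharmonic inequality on the Poincaré domain. After shrinking $\Omega$, one may assume that all geometric data are controlled on the compact closure of any subpolydisk of the admissible coordinate chart.

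First I would choose admissible coordinates so that $D \cap \Omega = \{z_{1}\cdots z_{l} = 0\}$ and split $\theta = \sum_{i=1}^{n} \theta_{i}\, dz_{i}$. With respect to $\omega_{\Poin}$, the desired bound $|\theta|^{2}_{h, \omega_{\Poin}} \leq C$ amounts to uniform bounds on $|z_{i}\theta_{i}|^{2}_{h}(\log |z_{i}|^{2})^{2}$ for $i \leq l$ and on $|\theta_{i}|^{2}_{h}$ for $i > l$. By restricting to one-dimensional slices in which all but one coordinate is frozen, the proof essentially reduces to the one-variable case on $\Delta\sta$, originally treated by Simpson.

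In that one-dimensional case, the heart of the argument is a pointwise subharmonicity estimate. Using the curvature formula of Lemma \ref{lemm-curvature}(3)---that the curvature of $(E^{p,q}, h)$ equals $-(\theta\varphi + \varphi\theta)$---together with the type identities in Lemma \ref{lemm-flat-identity} and the positive-definiteness of $h$ coming from the polarization, one extracts a pointwise inequality of Ahlfors--Schwarz type
\[ \sqrt{-1}\,\partial\db \log |\theta|^{2}_{h} \geq c\, |\theta|^{2}_{h}\, \sqrt{-1}\,dz \wedge d\zbar \]
on $\Delta\sta$. Rewriting this estimate against the Poincaré metric gives $\Delta_{\omega_{\Poin}} \log u \geq c'\, u$ for $u = |\theta|^{2}_{h, \omega_{\Poin}}$, and a version of the Ahlfors--Schwarz lemma then forces $u$ to be pointwise dominated by a universal constant depending only on $c'$. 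The behavior at the puncture is controlled because $E$ is a local system on $\Delta\sta$ with quasi-unipotent monodromy, so $|\theta|^{2}_{h}$ has at worst polynomial growth in $|\log|z||$, which is absorbed by the logarithm.

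The main obstacle is the derivation of the favorable differential inequality. The mixed terms involving $\varphi$ must be handled carefully so that the commutator $[\theta, \theta\sta]$ contributes with the correct sign; this ultimately depends on the Hodge-theoretic positivity of $h$ and the specific form of the Hodge bundle curvature. A further subtlety is that $|\theta|^{2}_{h}$ may vanish, so in practice one works with a regularized log quantity. The passage to several variables, which is the content of Mochizuki's generalization, requires proving that the constants in the one-variable estimate are uniform as one approaches the deeper strata of $D$; this uniformity is established by an induction on dimension using the same curvature methods together with the nilpotent orbit theorem to compare the Hodge metric to a model metric on each stratum.
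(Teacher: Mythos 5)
The paper does not prove this statement: it is quoted from Simpson (one variable) and Mochizuki (several variables), with only the remark that the proof ``follows from a clever use of Ahlfors lemma.'' Your outline matches that cited strategy --- derive a differential inequality $i\partial\db\log|\theta|_{h}^{2}\geq c\,|\theta|_{h}^{2}\,i\,dz\wedge d\zbar$ from the Hodge-bundle curvature formula and conclude by the Ahlfors--Schwarz lemma against $\omega_{\Poin}$, then reduce the several-variable case to one-dimensional slices --- so at the level of approach you are aligned with the source. Two points deserve correction, though. First, your appeal to quasi-unipotent monodromy to control $|\theta|_{h}^{2}$ near the puncture is both out of place and circular: the paper works with complex polarized variations of Hodge structure, where the monodromy theorem only gives eigenvalues of absolute value $1$ (not quasi-unipotence), and an a priori polynomial growth bound on $|\theta|_{h}^{2}$ is essentially a weak form of what is being proved. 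The generalized Ahlfors--Schwarz lemma needs no boundary growth hypothesis at all --- one exhausts $\Delta\sta$ by relatively compact hyperbolic subdomains, applies the maximum principle on each, and passes to the limit of their Poincar\'e metrics --- so this step should simply be dropped. Second, be careful that applying Ahlfors--Schwarz only on small round disks $D(z_{0},|z_{0}|/2)\subset\Delta\sta$ yields the weaker bound $|\theta|^{2}\lesssim|z_{0}|^{-2}$, which misses the $(\log|z_{0}|^{2})^{-2}$ factor; one must compare against the Poincar\'e metric of the full punctured disk, which is exactly why the exhaustion argument (rather than a local one) is needed.
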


	\begin{rema}
		We will use this constant $C$ such that $| \theta u|_{h, \omega_{\Poin}}^{2} \leq C |u|_{h, \omega_{\Poin}}^{2}$ for all $(r, s)$-forms valued in $E$. The original statement in \cite{mochizuki2002asymptotic} says that we can get a constant $C_{0}$ such that if we write $\theta = \sum \theta_{j} dz_{j}$, then 
		\begin{align*}
			& | \theta_{i}|_{h, \omega}^{2} \leq \frac{C_{0}}{|z_{i}|^{2} (-\log |z_{i}|^{2})^{2} } \quad \text{for } 1 \leq i \leq l, \quad \text{and} \\
			& | \theta_{i}|_{h, \omega}^{2} \leq C_{0} \quad \text{for } l+1 \leq i \leq n,
		\end{align*}
		where $\omega$ is the usual Euclidean metric. We can get the bound $|\theta u|_{h, \omega_{\Poin}}^{2} \leq C |u|_{h, \omega_{\Poin}}^{2}$ for all $(r, s)$-forms $u$ by multiplying $C_{0}$ by an extra constant only depending on the dimension $n$.
	\end{rema}
	
	\subsection{Prolongation via Norm Growth and the Nilpotent Orbit Theorem} \label{sec-prolongation}
    We address the key consequences of the nilpotent orbit theorem (\cite{schmid1973variation, cattani1986degeneration} for integral variations of Hodge structures and \cite{sabbah2022degenerating,deng2022nilpotent} for complex variations) and the asymptotics of the Hodge norms. In particular, we relate Deligne's extension and the prolongation bundles via the growth of Hodge norms. We briefly summarize the relation between these two objects following \cite{deng2022nilpotent}.
	
	\begin{defi}[Prolongation Bundles]
		Let $\overline{X}$ be a complex manifold and let $D = \sum_{i=1}^{\nu} D_{\nu}$ be an SNC divisor on $\overline{X}$. Let $(E, h)$ be a hermitian vector bundle on $X = \overline{X} \setminus D$. For each $\alpha= (\alpha_{1},\ldots, \alpha_{\nu}) \in \RR^{\nu}$, we can prolong $E$ to an $\cO_{\overline{X}}$-module $\cP_{\alpha} E$ as follows. Let $(U; z_{1},\ldots, z_{n})$ be an admissible coordinate and suppose that $D|_{U}$ is defined by $z_{1}\cdots z_{l} = 0$. For simplicity, assume that $D_{i} = (z_{i} = 0)$. Then,
		$$ \cP_{\alpha}E(U) = \left\{ \sigma \in \Gamma(E, X \cap U) : |\sigma|_{h} \lesssim \prod_{j=1}^{l} |z_{j}|^{-\alpha_{j} - \eps} \text{ on } U \text{ for all } \eps > 0  \right\}. $$
		For two non-negative functions $f$ and $g$, we write $f \lesssim g$ on $U$ if for each compact subset $K \subset U$, there exists some $C_{K} > 0$ such that $f \leq C_{K} g$ on $K$.
	\end{defi}
	
	We discuss several properties of prolongation bundles when $E$ comes from a complex polarized variation of Hodge structures. The following theorem is proved in \cite{mochizuki2002asymptotic} in the language of acceptable bundles.
	
	\begin{theo}[Mochizuki] \label{theo-Prolongation}
		Let $\overline{X}$ be a complex manifold and let $D = \sum_{i=1}^{\nu} D_{i}$ be an SNC divisor on $\overline{X}$. Let $X = \overline{X} \setminus D$ and $j \colon X \to \overline{X}$ the open inclusion. Let $E$ be a complex polarized variation of Hodge structures on $X$. For every $p, q$ and every $\alpha \in \RR^{\nu}$, the prolongation bundles $\cP_{\alpha} E^{p,q}$ satisfy the following properties:
		\begin{enumerate}
			\item $\cP_{\alpha}E^{p,q}$ is a locally free sheaf.
			\item $\cP_{\alpha} E \subset \cP_{\beta} E$ if $\alpha_{i} \leq \beta_{i}$ for all $i$.
			\item For $1 \leq i \leq \nu$, $\cP_{\alpha + e_{i}} E^{p,q} = \cP_{\alpha} E^{p,q} \otimes \cO_{X}(D_{i})$ where $e_{i}$ is the $i$-th unit vector.
			\item $\cP_{\alpha + \vec{\eps}} E^{p,q} = \cP_{\alpha}E^{p,q}$ for $\vec{\eps} = (\eps, \ldots, \eps)$ for all $0 < \eps \ll 1$.
			\item The set $\{ \alpha : \cP_{\alpha} E / \cP_{< \alpha} E  \neq 0\}$ is discrete in $\RR^{\nu}$. \hfill{$\square$}
		\end{enumerate}
	\end{theo}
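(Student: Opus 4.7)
The plan is to reduce to a local statement on an admissible chart and then derive the five properties from a precise Hodge-norm estimate supplied by the nilpotent orbit theorem. All assertions are local, so first I would fix an admissible coordinate $(U; z_{1},\ldots,z_{n})$ with $D|_{U}=(z_{1}\cdots z_{l}=0)$ and pass to the universal cover $\HH^{l}\times\Delta^{n-l}$ via $z_{j}=\exp(2\pi i\tilde{z}_{j})$. For each branch $D_{i}$ I would write the local monodromy as $T_{i}=T_{i}^{\mathrm{ss}}\cdot\exp(-2\pi i N_{i})$ with commuting semisimple parts $T_{i}^{\mathrm{ss}}$ (eigenvalues $e^{-2\pi i\beta_{i}}$ for various $\beta_{i}\in\RR$) and commuting nilpotent logarithms $N_{i}$, and decompose $E$ into the simultaneous generalized eigenspaces of the $T_{i}^{\mathrm{ss}}$.

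The central input is the several-variable nilpotent orbit theorem in the complex setting (\cite{sabbah2022degenerating}, \cite{deng2022nilpotent}) together with the corresponding $SL_{2}$-orbit theorem. On each generalized eigenspace the Hodge filtration is approximated by $\exp(\sum_{j}\tilde{z}_{j}N_{j})F_{0}^{\bullet}$ with exponentially small error, and this yields the quantitative estimate
$$|v|_{h}^{2} \asymp \prod_{j=1}^{l}|z_{j}|^{-2\beta_{j}}(-\log|z_{j}|)^{m(v,j)}$$
(up to bounded factors) for every flat multivalued section $v$ in the generalized eigenspace with exponents $(\beta_{1},\ldots,\beta_{l})$, where the integers $m(v,j)$ are determined by the weight filtrations of the $N_{j}$ evaluated at $v$. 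The same asymptotics transfer to each smooth summand $E^{p,q}$: since the Higgs field $\theta$ is bounded in the norm of Theorem \ref{theo-HiggsBound}, projection onto $E^{p,q}$ does not move the power of $|z_{j}|$ and only affects the polylog exponent.

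Granted this estimate, the five statements follow essentially by bookkeeping. For (1), build a holomorphic frame of $\cP_{\alpha}E^{p,q}$ by selecting a multivalued flat frame $\{v_{\sigma}\}$ with exponents $\beta_{j}(\sigma)\in[-\alpha_{j},-\alpha_{j}+1)$ and normalizing to single-valued sections $\tilde{v}_{\sigma}=\bigl(\prod_{j}z_{j}^{\beta_{j}(\sigma)}\bigr)\exp\bigl(\sum_{j}(\log z_{j})N_{j}\bigr)v_{\sigma}$; the norm estimate forces every section in $\cP_{\alpha}E^{p,q}(U)$ to be an $\cO_{\overline{X}}$-linear combination of the $\tilde{v}_{\sigma}$, giving local freeness. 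Property (2) is immediate from $|z_{j}|^{-\alpha_{j}-\eps}\leq |z_{j}|^{-\beta_{j}-\eps}$ when $\alpha_{j}\leq \beta_{j}$, and (3) follows because multiplication by $z_{i}$ shifts $\beta_{j}$ by $\delta_{ij}$, matching the twist by $\cO(D_{i})$. Finally, (4) and (5) reduce to the finiteness of the set of exponents $\beta$ appearing in the generalized eigenspaces of the $T_{i}^{\mathrm{ss}}$, which makes these exponents discrete modulo $\ZZ$ and forces $\cP_{\alpha}E^{p,q}$ to jump only on a discrete locus in $\RR^{\nu}$.

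The principal obstacle is the Hodge norm estimate itself. In one variable it is a consequence of Schmid's $SL_{2}$-orbit theorem; the several-variable integral case is Cattani--Kaplan--Schmid; the complex several-variable case needed here is the work of Mochizuki \cite{mochizuki2002asymptotic}, with a more recent complex nilpotent orbit theorem in \cite{deng2022nilpotent}. Once that estimate is available, the remaining arguments are essentially formal; the only subtlety is verifying that Higgs-field boundedness suffices to transfer the estimate from $E$ to each graded piece $E^{p,q}$ without altering the $|z_{j}|$-exponents.
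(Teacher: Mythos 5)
The paper does not prove this theorem at all: it is stated with a citation to \cite{mochizuki2002asymptotic} (in the language of acceptable bundles) and the nilpotent orbit theorem of \cite{deng2022nilpotent}, and the box is placed immediately after the statement. Your proposal is therefore not in competition with an argument in the paper; it is a reconstruction of the strategy of the cited sources, and as a roadmap it is essentially the right one: localize to an admissible chart, decompose by the semisimple part of the commuting local monodromies, invoke the several-variable norm estimates coming from the nilpotent/$SL_2$-orbit theorems, build the twisted frame $\tilde{v}_{\sigma}$ to get local freeness, and observe that (2) and (3) are formal from the definition of $\cP_{\alpha}$ while (4) and (5) reduce to finiteness of the local exponent set modulo $\ZZ^{\nu}$. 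The one place where your sketch is thinner than the actual argument is the transfer of the norm asymptotics from $E$ to the graded pieces $E^{p,q}$: this does not follow from boundedness of $\theta$ in the Poincar\'e metric (that bound is itself a consequence, not a cause, of the refined estimates), but rather from the norm estimates for the bigraded pieces of the monodromy weight and Hodge filtrations supplied by the $SL_2$-orbit theorem, which is precisely the deep content of Mochizuki's and Deng's work. Since you flag this as the principal obstacle and defer it to those references, your write-up is consistent with the level of detail the paper itself adopts.
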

	
	We finally introduce the construction of Deligne's extension and describe the relation to the prolongation via Hodge norms. Let $\overline{X}$ be a complex manifold and $D$ be an SNC divisor. Suppose we have a complex variation of Hodge structures $E$ on $X = \overline{X} \setminus D$. Choose an admissible coordinate $(\Omega; z_{1},\ldots, z_{n})$ and suppose that $D \cap \Omega$ is given by the equation $z_{1}\cdots z_{l} = 0$. The fundamental group $\pi_{1}(\Omega\sta)$ is isomorphic to $\ZZ^{l}$, with generators $\gamma_{1},\ldots, \gamma_{l}$, where $\gamma_{i}$ corresponds to the loop going once around the puncture counterclockwise in the $i$-th coordinate. The universal cover of $\Omega\sta$ is isomorphic to $\HH^{l} \times \Delta^{n-l}$ where $\HH = \{ z \in \CC: \mathrm{Im} z > 0\}$ is the upper-half plane. The quotient map is given by
	$$ \pi \colon \HH^{l} \times \Delta^{n-l} \to \Omega\sta , \qquad (w_{1},\ldots,w_{l}, z_{l+1}, \ldots,  z_{n}) \mapsto (e^{2\pi i w_{1}}, \ldots , e^{2\pi i w_{l}}, z_{l+1} ,\ldots, z_{n}).$$
	Let $V$ be the space of flat sections of $\pi\sta E$ on $\HH^{l} \times \Delta^{n-l}$. Then we have the monodromy representation $\Phi\colon \pi_{1}(\Omega\sta) \to \GL(V)$ given as follows.
	$$ \Phi(\gamma_{i}) (v) (w_{1}, \ldots, w_{l}, z_{l+1}, \ldots, z_{n}) = v(w_{1}, \ldots, w_{i} - 1, \ldots, w_{l}, z_{l+1}, \ldots, z_{n}).$$
	Denote by $T_{i}$ as the image of $\gamma_{i}$. The monodromy theorem says that the eigenvalues of $T_{i}$ have absolute value 1. Therefore, we can consider the Jordan decomposition
	$$ T_{j} = e^{2\pi i (S_{j} + N_{j})},$$
	where $S_{j}$ is a semisimple operator whose eigenvalues lie in the interval $[-\alpha_{j} , -\alpha_{j} + 1)$ and $N_{j}$ is a nilpotent operator. One can see that all $S_{j}$ and $N_{j}$ mutually commute since $\pi_{1}(\Omega\sta)$ is abelian. For a flat section $v \in V$, consider the expression
	$$ \tilde{v} =  \exp \left( \sum_{j=1}^{l}(S_{j} + N_{j})w_{j} \right) v. $$
	Then we see that $\tilde{v}$ is stable under the transformation $w_{j} \mapsto w_{j} + 1$ for all $1 \leq j \leq l$ hence, $\tilde{v}$ descends to a section on $\Omega\sta$. We define $E_{\alpha}$ on $\Omega$ to be the locally free sheaf generated by these sections $\tilde{v}$. For each subbundle $F^{p} E \subset E$, we define a filtration on $E_{\alpha}$ by
	$$ F^{p}E_{\alpha} \colon= j \lsta F^{p}E \cap E_{\alpha}.$$
	The nilpotent orbit theorem relates Deligne's extension and the prolongation bundles via norm growth. Here, we use the version of \cite{deng2022nilpotent} which is for complex polarized variations of Hodge structures.
	
	\begin{theo}[Nilpotent Orbit Theorem \cite{deng2022nilpotent}]
		Let $\overline{X}$ be a complex manifold and let $D = \sum_{i=1}^{\nu} D_{i}$ be a simple normal crossing divisor on $\overline{X}$. Let $E$ be a complex polarized variation of Hodge structures on $X = \overline{X} \setminus D$. Then for every $\alpha \in \RR^{\nu}$, $F^{p}E_{\alpha}$ and $E_{\alpha}^{p,q} = F^{p}E_{\alpha}/ F^{p+1} E_{\alpha}$ are locally free sheaves. Moreover, $E_{\alpha}^{p,q}$ can be naturally identified to $\cP_{\alpha}E^{p,q}$, which is the prolongation via growth of the Hodge norm. \hfill{$\square$}
	\end{theo}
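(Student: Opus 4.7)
The statement is local near each point of $D$, so I would work in an admissible coordinate chart $(\Omega; z_{1},\ldots,z_{n})$ with $D \cap \Omega = (z_{1}\cdots z_{l} = 0)$. The plan has two parts. First, establish that the canonical sections $\tilde v = \exp\bigl(\sum_{j}(S_{j}+N_{j})w_{j}\bigr)v$ generate a locally free $\cO_{\Omega}$-module $E_{\alpha}$, and that the induced filtration $F^{p}E_{\alpha} = j\lsta F^{p}E\cap E_{\alpha}$ is locally free with locally free graded pieces $E_{\alpha}^{p,q}$. Second, identify each $E_{\alpha}^{p,q}$ with the prolongation bundle $\cP_{\alpha}E^{p,q}$ by comparing the Hodge norm growth of canonical sections with the defining condition of $\cP_{\alpha}$.

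For the first part, the key input is the nilpotent orbit theorem applied to the pull-back $\pi\sta E$ on the universal cover $\HH^{l}\times\Delta^{n-l}$. After conjugating by $\exp\bigl(-\sum_{j}(S_{j}+N_{j})w_{j}\bigr)$, the multi-valued filtration $\pi\sta F^{\bullet}E$ becomes single-valued on $\Omega\sta$, and Schmid's nilpotent orbit theorem in one variable, generalized to several variables by Cattani--Kaplan--Schmid in the integral case and by Mochizuki and Deng in the complex case, says that this filtration extends holomorphically across $D$; in particular, $F^{p}E_{\alpha}$ is a holomorphic subbundle of $E_{\alpha}$. For the second part, decompose $V=\bigoplus_{\lambda}V_{\lambda}$ into simultaneous generalized eigenspaces of $S_{1},\ldots,S_{l}$ with eigenvalues $\lambda=(\lambda_{1},\ldots,\lambda_{l})\in\prod_{j}[-\alpha_{j},-\alpha_{j}+1)$. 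For $v\in V_{\lambda}$, the Hodge norm of $\tilde v$ satisfies an asymptotic estimate of the form
\[
|\tilde v|_{h}^{2}\ \asymp\ \prod_{j=1}^{l}|z_{j}|^{-2\lambda_{j}}\,P\bigl(\log|z_{1}|,\ldots,\log|z_{l}|\bigr),
\]
where $P$ is a positive polynomial determined by the weight filtration of $\sum_{j} N_{j}$. This is the norm asymptotics that accompanies the nilpotent orbit theorem (the several-variable SL$_{2}$-orbit theorem of Cattani--Kaplan--Schmid, and its complex version in \cite{deng2022nilpotent}). Since $-\lambda_{j}\leq\alpha_{j}$, the polynomial factor in $\log|z_{j}|$ is absorbed by an arbitrary $\eps>0$, so the image of $\tilde v$ in $E^{p,q}$ is a section of $\cP_{\alpha}E^{p,q}$. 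The converse inclusion follows by expanding an arbitrary germ of $\cP_{\alpha}E^{p,q}$ in the basis of projected canonical sections and observing that the polynomial growth bound forces every contributing eigenvalue to satisfy $\lambda_{j}\geq-\alpha_{j}$, so the germ lies in $E_{\alpha}^{p,q}$.

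The real obstacle is that the whole plan rests on the multi-variable nilpotent orbit theorem and its companion SL$_{2}$-orbit norm asymptotics, both highly non-trivial analytic results. Since this paper treats the theorem purely as a black box input, the work actually done at this step reduces to organizing the local construction of $E_{\alpha}$, defining the filtration $F^{\bullet}E_{\alpha}$, and making the norm comparison with $\cP_{\alpha}E^{p,q}$; the analytic heart is deferred to \cite{deng2022nilpotent}, and no new estimates need to be proved here.
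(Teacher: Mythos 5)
The paper offers no proof of this statement: it is quoted from \cite{deng2022nilpotent} as a black-box input (hence the $\square$ at the end of the statement), so your outline is not competing with any argument in the text. As a summary of the standard proof in the literature it is structurally correct, and it matches how the surrounding material is organized: construct $E_{\alpha}$ locally from the canonical sections $\tilde v$, invoke the several-variable nilpotent orbit theorem to see that the untwisted Hodge filtration extends holomorphically across $D$ (giving local freeness of $F^{p}E_{\alpha}$ and of the graded pieces), and then identify $E_{\alpha}^{p,q}$ with $\cP_{\alpha}E^{p,q}$ via the norm asymptotics of canonical sections. You also correctly locate the analytic heart in the cited reference, which is exactly where the paper leaves it. One sign slip worth fixing: with the paper's conventions that the eigenvalues $\lambda_{j}$ of $S_{j}$ lie in $[-\alpha_{j},-\alpha_{j}+1)$ and that $\cP_{\alpha}$ consists of sections with $|\sigma|_{h}\lesssim\prod_{j}|z_{j}|^{-\alpha_{j}-\eps}$, the asymptotic for a canonical section attached to $v\in V_{\lambda}$ should read $|\tilde v|_{h}^{2}\asymp\prod_{j}|z_{j}|^{+2\lambda_{j}}\,P(\log|z_{1}|,\ldots,\log|z_{l}|)$ rather than $|z_{j}|^{-2\lambda_{j}}$. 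The inequality $-\lambda_{j}\leq\alpha_{j}$ that you then invoke is the correct one for the corrected exponent, since it gives $|z_{j}|^{2\lambda_{j}}\leq|z_{j}|^{-2\alpha_{j}}$ for $|z_{j}|<1$ with the logarithmic factors absorbed by $\eps$; with the exponent as you wrote it, membership in $\cP_{\alpha}E^{p,q}$ would not follow (for example, $\alpha_{j}=0$ and $\lambda_{j}$ close to $1$ would violate the required bound).
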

	Note that the connection $\nabla \colon E \to \Omega_{X}^{1} \otimes E$ extends to a logarithmic connection
	$$ \nabla \colon E_{\alpha} \to \Omega_{\overline{X}}^{1} (\log D) \otimes E_{\alpha}$$
	and satisfies the Griffiths transversality condition, i.e., on each graded piece $E_{\alpha}^{p,q}$, the connection $\nabla$ induces an $\cO_{X}$-linear map
	$$ \theta \colon E_{\alpha}^{p,q} \to \Omega_{\overline{X}}^{1} (\log D) \otimes E_{\alpha}^{p-1,q+1}.$$

	\subsection{Some Standard Kähler Identities and Functional Analysis} This section is more or less standard, but we include it for more algebraically oriented readers. Let $(X, \omega)$ be a Kähler manifold. We denote by $L$ the Lefschetz operator $\omega \wedge \bullet$ and its adjoint by $\Lambda$ (or $\Lambda_{\omega}$ if we want to emphasize the Kähler metric). We will use the following Kähler identities which can be found in \cite{demailly2012analytic}*{Chapter 4.C} and many other standard textbooks in complex geometry and Hodge theory.
	
	\begin{prop} \label{prop-Kahler-comm}
		Let $E$ be a holomorphic vector bundle with a smooth hermitian metric on a Kähler manifold $(X, \omega)$. If $D = D' +D''$ is the Chern connection and $\delta'$, $\delta''$ are the corresponding adjoints of $D'$ and $D''$, then we have the following identities:
		\begin{align*}
			& [\delta'', L] = i D' & [\delta', L] & = - i D'' \\ &[\Lambda, D''] = -i\delta' & [\Lambda, D'] & = i \delta''.
		\end{align*}
	\end{prop}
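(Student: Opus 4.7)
The plan is to reduce all four identities to the corresponding flat statements on $\CC^n$ with trivial bundle and Euclidean metric, where they can be verified by a direct algebraic computation. Since the identities are local operator equalities, it suffices to prove that both sides agree when applied to any smooth $E$-valued form at an arbitrary point $x_0 \in X$. I would choose Kähler normal coordinates $(z_1, \ldots, z_n)$ centered at $x_0$ so that $g_{j\bar k}(x_0) = \delta_{jk}$ and $dg_{j\bar k}(x_0) = 0$, together with a normal holomorphic frame for $E$ in which $h_{\alpha\bar\beta}(x_0) = \delta_{\alpha\beta}$ and $dh_{\alpha\bar\beta}(x_0) = 0$. In this frame the Chern connection forms vanish at $x_0$, so $D'\phi(x_0) = \partial\phi(x_0)$ and $D''\phi(x_0) = \bar\partial\phi(x_0)$ for every smooth $E$-valued form $\phi$, where $\partial$ and $\bar\partial$ act componentwise in the chosen frame.

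A parallel statement holds for the adjoints. The local formulas for $\delta'$ and $\delta''$ (via a Hodge-$\ast$ expression) involve only the metric data and its first derivatives, all of which coincide at $x_0$ with those of the flat model. Hence $\delta'\phi(x_0) = \partial_0^*\phi(x_0)$ and $\delta''\phi(x_0) = \bar\partial_0^*\phi(x_0)$, where $\partial_0^*, \bar\partial_0^*$ are the Euclidean formal adjoints. The same reasoning applies to composites such as $D''(\Lambda\phi)$ or $\Lambda(D''\phi)$: because $\Lambda$ depends only on $g^{j\bar k}$, whose first derivatives at $x_0$ also vanish (by differentiating $g^{j\bar l} g_{l\bar k} = \delta_{jk}$), the value of $\bar\partial(\Lambda\phi)$ at $x_0$ equals $\Lambda_0(\bar\partial\phi)(x_0)$, and similarly for the other compositions. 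Consequently each of the four identities reduces at $x_0$ to the corresponding identity in the flat trivial model, and since $x_0$ is arbitrary, the global operator identities follow from the flat ones.

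To finish, I note that the four flat identities come in two formal-adjoint pairs linked by complex conjugation, so it suffices to verify just one, say $[\Lambda_0, \bar\partial] = -i\partial_0^*$. Writing $\Lambda_0$ as the appropriate sum of double interior products associated to $\omega_0 = i \sum dz_j \wedge d\bar z_j$ and expanding the graded commutator with $\bar\partial = \sum_k d\bar z_k \wedge \partial/\partial\bar z_k$ using the universal anti-commutation relations $\{\iota_{\partial/\partial\bar z_j}, d\bar z_k \wedge \cdot\} = \delta_{jk}$ and $\{\iota_{\partial/\partial z_j}, d\bar z_k \wedge \cdot\} = 0$, a short monomial computation yields $-i\partial_0^*$. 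The only mild obstacle is keeping careful track of sign and normalization conventions and verifying rigorously that the first derivatives of the relevant metric coefficients really vanish at $x_0$; both issues are handled by the defining property of Kähler normal coordinates and a routine bookkeeping argument that can be consulted in \cite{demailly2012analytic}*{Chapter 4.C}.
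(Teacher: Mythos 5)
The paper does not prove this proposition at all: it is quoted as a standard fact with a pointer to \cite{demailly2012analytic}*{Chapter 4.C}, and the argument given there is exactly your reduction via Kähler normal coordinates and a normal holomorphic frame to the flat constant-coefficient model, so your proposal is correct and matches the intended (cited) proof. One sentence is miswritten: you assert that $\db(\Lambda\phi)$ at $x_{0}$ equals $\Lambda_{0}(\db\phi)(x_{0})$, which taken literally would force the commutator to vanish; what the vanishing of the first derivatives of $g^{j\bar k}$ actually gives is $\db(\Lambda\phi)(x_{0})=\db(\Lambda_{0}\phi)(x_{0})$, i.e.\ each \emph{composite} may be replaced by the corresponding flat composite, after which the nonzero flat commutator $[\Lambda_{0},\db]=-i\de_{0}^{*}$ does the work --- your subsequent sentences show this is what you meant, so it is a slip of notation rather than a gap.
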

	
	Also, we have the Bochner-Kodaira-Nakano identity for the Laplacian operator:
	
	\begin{prop} \label{prop-BKN}
		If $\Theta_{E}$ is the curvature of $E$, and
		$$ \lap'' = D'' \delta'' + \delta'' D'' \qquad \textnormal{and} \qquad \lap' = D' \delta' + \delta' D', $$
		then
		$$ \lap'' = \lap' + [i \Theta_{E}, \Lambda].$$
	\end{prop}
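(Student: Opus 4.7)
The plan is to derive the identity algebraically from the Kähler commutator relations of Proposition \ref{prop-Kahler-comm} together with the curvature equation $D^2 = \Theta_E$ for the Chern connection, essentially via a single application of the graded Jacobi identity.

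First, I would substitute $\delta'' = -i[\Lambda, D']$, which is a rearrangement of the identity $[\Lambda, D'] = i\delta''$, into the definition of $\lap''$ to obtain
\[ \lap'' \;=\; \{D'', \delta''\} \;=\; -i\,\bigl\{D'', [\Lambda, D']\bigr\}, \]
reducing the problem to expanding a single nested super-bracket. The key step is then the graded Jacobi identity: since $D'$ and $D''$ are odd (degree one) while $\Lambda$ is even (degree $-2$), a direct expansion yields
\[ \bigl\{D'', [\Lambda, D']\bigr\} \;=\; \bigl\{[D'', \Lambda], D'\bigr\} + \bigl[\Lambda, \{D'', D'\}\bigr]. \]
The first term on the right simplifies via $[D'', \Lambda] = i\delta'$ (again from Proposition \ref{prop-Kahler-comm}) to $\{i\delta', D'\} = i\lap'$. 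For the second term I would invoke that $D^2 = \Theta_E$ and that on a holomorphic bundle $(D')^2 = (D'')^2 = 0$ since $\Theta_E$ is of pure type $(1,1)$; hence $\{D'', D'\} = \Theta_E$, and the bracket becomes $-[\Theta_E, \Lambda]$. Multiplying through by $-i$ produces $\lap'' = \lap' + [i\Theta_E, \Lambda]$, as desired.

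There is no genuine conceptual obstacle here; the whole identity is a piece of Lie superalgebra bookkeeping once the Kähler commutator relations are in hand. The only point demanding care is tracking which brackets are commutators and which are anticommutators when applying the graded Jacobi identity, since the signs and bracket types depend on the parities of $D'$, $D''$, and $\Lambda$. Once this is set up correctly, the full proof reduces to essentially the two displayed equations above and one bilinear expansion.
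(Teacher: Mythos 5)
Your proof is correct and is precisely the standard derivation of the Bochner--Kodaira--Nakano identity (substitute $\delta''=-i[\Lambda,D']$, apply the graded Jacobi identity, and use $\{D',D''\}=\Theta_E$) found in the references cited just before Proposition~\ref{prop-Kahler-comm}; the paper itself states Proposition~\ref{prop-BKN} without proof. Your bracket-type and sign bookkeeping all check out, so there is nothing further to compare.
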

	
	We introduce a useful computation for the commutator operator $[i \Theta_{E}, \Lambda]$ in the case when $E$ is a line bundle. The following lemma can be obtained by a straightforward calculation.
	
	\begin{lemm} [\cite{demailly1997complex}*{Chapter VII.4}]
 \label{lemm-Girbau}
		Let $\omega$ be a Kähler metric on $X$ and let $\cL$ be a hermitian line bundle on $X$. If $\gamma_{1} \leq \ldots \leq \gamma_{n}$ are the eigenvalues of $i \Theta_{\cL,x}$ with respect to $\omega_{x}$, then we have
		$$ \langle [i \Theta_{\cL}, \Lambda ] u , u \rangle \geq (\gamma_{1} + \ldots + \gamma_{s} - \gamma_{r+1} - \ldots - \gamma_{n}) |u|^{2}$$
		for all $u \in \bigwedge^{r,s} T\sta_{x} X$.
	\end{lemm}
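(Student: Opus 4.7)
The bound is pointwise in $x \in X$, so I would fix the point and treat the problem as a finite-dimensional Hermitian linear algebra question on $\bigwedge^{r,s} T^{*}_{x} X$. The operator $[i\Theta_{\cL}, \Lambda]$ is self-adjoint with respect to the Hermitian metric induced by $\omega_{x}$, so the claim reduces to bounding its smallest eigenvalue from below.

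Since $\omega_{x}$ and $i\Theta_{\cL, x}$ are both Hermitian $(1,1)$-forms, they can be simultaneously diagonalized: I would choose complex coordinates $(z_{1},\ldots,z_{n})$ at $x$ so that, at $x$,
$$ \omega_{x} = i \sum_{j=1}^{n} dz_{j} \wedge d\zbar_{j}, \qquad i\Theta_{\cL, x} = i \sum_{j=1}^{n} \gamma_{j}\, dz_{j} \wedge d\zbar_{j}. $$
In this basis the monomials $dz_{J} \wedge d\zbar_{K}$ with $|J|=r$, $|K|=s$ are mutually orthogonal, and any $u$ expands as $u = \sum_{J,K} u_{JK}\, dz_{J} \wedge d\zbar_{K}$ with $|u|^{2} = \sum |u_{JK}|^{2}$.

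The crux of the proof is the monomial identity
$$ [i\Theta_{\cL}, \Lambda](dz_{J} \wedge d\zbar_{K}) = \Bigl(\sum_{j\in J} \gamma_{j} + \sum_{k \in K} \gamma_{k} - \sum_{j=1}^{n} \gamma_{j} \Bigr) dz_{J} \wedge d\zbar_{K}, $$
which I would verify by writing $\Lambda = -i \sum_{j} \iota_{\partial/\partial z_{j}} \iota_{\partial/\partial \zbar_{j}}$ and expanding $i\Theta_{\cL} \circ \Lambda - \Lambda \circ i\Theta_{\cL}$ term by term: the $j$-th summand of $i\Theta_{\cL}$ contributes $+\gamma_{j}$ exactly when $j\in J$ and when $j\in K$, and one picks up a global $-\sum_{j} \gamma_{j}$ from pushing a wedge past an interior product over all $n$ directions. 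As a sanity check, setting $\gamma_{j} \equiv 1$ (so $i\Theta_{\cL}=\omega$) recovers the classical identity $[L_{\omega}, \Lambda_{\omega}] = (r+s-n)\,\id$ on $(r,s)$-forms.

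With the diagonal formula in hand, I would finish by minimizing the scalar coefficient over admissible $(J,K)$. Rewriting it as $\sum_{k\in K}\gamma_{k} - \sum_{j\notin J}\gamma_{j}$, the first sum is at least $\gamma_{1} + \cdots + \gamma_{s}$ (since $|K|=s$ and the $\gamma_{j}$ are ordered nondecreasingly) and the second is at most $\gamma_{r+1} + \cdots + \gamma_{n}$ (since $|J^{c}| = n-r$). Combined with orthogonality of the monomial basis, this yields the stated inequality for arbitrary $u$. The only substantive step in this plan is the monomial commutator identity; it is a careful but entirely elementary calculation with wedge products and interior multiplications, and all remaining steps reduce to elementary reshuffling of ordered sums.
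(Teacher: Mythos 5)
Your proposal is correct and follows essentially the same route as the paper: simultaneous diagonalization of $\omega_{x}$ and $i\Theta_{\cL,x}$, the diagonal action of $[i\Theta_{\cL},\Lambda]$ on the monomials $dz_{J}\wedge d\zbar_{K}$ with coefficient $\sum_{j\in J}\gamma_{j}+\sum_{k\in K}\gamma_{k}-\sum_{l}\gamma_{l}$, and then minimizing that coefficient using the ordering of the eigenvalues. Your explicit rewriting of the coefficient as $\sum_{k\in K}\gamma_{k}-\sum_{j\notin J}\gamma_{j}$ makes the final estimate slightly more transparent than the paper's one-line bound, but the argument is the same.
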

	
	{\it Proof.} Simultaneously diagonalize $\omega_{x}$ and $i \Theta_{\cL,x}$ so that we can express the two as
	$$
		 \omega_{x} = i \sum_{\mu} \zeta_{\mu} \wedge \bar{\zeta}_{\mu} \qquad \textnormal{and} \qquad  i \Theta_{\cL,x} = i \sum_{\mu} \gamma_{\mu} \zeta_{\mu} \wedge \bar{\zeta}_{\mu}.
	$$
	Then a straightforward computation tells us that for $u = \sum_{J, K} u_{J,K} \zeta_{J} \wedge \bar{\zeta}_{K}$, we have
	\begin{align*}
		\langle[i \Theta_{\cL}, \Lambda] u, u \rangle & = \sum_{J, K} \left( \sum_{j \in J} \gamma_{j} + \sum_{k \in K} \gamma_{k} - \sum_{l=1}^{n} \gamma_{l} \right) |u_{J, K}|^{2} \\
		& \geq (\gamma_{1} + \ldots + \gamma_{s} - \gamma_{r+1} - \ldots - \gamma_{n}) |u|^{2}. \qquad \square 
	\end{align*}
	
	The following is a standard lemma for $L^{2}$-existence statements. For example, see \cite{demailly1997complex}*{Chapter VIII. \S1}.
	
	\begin{lemm} \label{lemm-Hilbert-technique}
		Let $H_{1}, H_{2}$ and $H_{3}$ be Hilbert spaces and let $T \colon H_{1} \to H_{2}$ and $S \colon H_{2} \to H_{3}$ be closed and densely defined operators such that $ST = 0$. Let $T\sta$ and $S\sta$ be the adjoints of $T$ and $S$, respectively. Suppose that there exists $\eps > 0$ such that
		$$ \norm{T\sta u}^{2} + \norm{S u}^{2} \geq \eps^{2} \norm{u}^{2} \qquad \textnormal{for all } u \in \Dom(T\sta) \cap \Dom(S).$$
		Then for every $u \in H_{2}$ such that $Su =0$, there exists $v \in H_{1}$ such that $Tv = u$ and $\norm{v} \leq \eps^{-1} \norm{u}$.
	\end{lemm}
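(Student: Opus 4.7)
My plan is to follow the standard orthogonal-decomposition / Hahn-Banach argument that reduces this to an $L^2$-duality problem. The key observation driving everything is that $ST=0$ implies $\im(T) \subset \ker(S)$, and since $\ker(S)$ is closed (because $S$ is closed), we get a direct sum decomposition $H_{2} = \ker(S) \oplus \ker(S)\orth$ with $\ker(S)\orth \subset \overline{\im(T)}\orth = \ker(T\sta)$.

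The main step is to construct a linear functional on $\im(T\sta) \subset H_{1}$ and then represent it by Riesz. Given $u \in H_{2}$ with $Su = 0$, for any $w \in \Dom(T\sta) \cap \Dom(S)$ write $w = w_{1} + w_{2}$ with $w_{1} \in \ker(S)$ and $w_{2} \in \ker(S)\orth$. Since $w_{2} \in \ker(T\sta)$, we have $T\sta w = T\sta w_{1}$; and since $u \in \ker(S) \perp w_{2}$, we have $\langle w, u \rangle = \langle w_{1}, u \rangle$. Moreover $w_{1} = w - w_{2}$ lies in $\Dom(T\sta)$ (both summands do) and $w_{1} \in \ker(S) \subset \Dom(S)$. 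Applying the a priori inequality to $w_{1}$ and using $Sw_{1} = 0$ gives
$$ \|w_{1}\|^{2} \leq \eps^{-2} \|T\sta w_{1}\|^{2} = \eps^{-2} \|T\sta w\|^{2}.$$
Consequently
$$ |\langle w, u\rangle| = |\langle w_{1}, u\rangle| \leq \|w_{1}\|\,\|u\| \leq \eps^{-1} \|u\| \cdot \|T\sta w\|.$$
This bound shows simultaneously that the rule $\ell(T\sta w) := \langle w, u\rangle$ is well defined on $\im\bigl(T\sta|_{\Dom(T\sta)\cap \Dom(S)}\bigr)$ (apply the inequality to $w - w'$ whenever $T\sta w = T\sta w'$) and continuous with norm $\leq \eps^{-1}\|u\|$.

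Next I extend $\ell$ to a continuous linear functional on $H_{1}$ by Hahn-Banach (or equivalently, by extending by $0$ to the orthogonal complement of the closure of its domain) without enlarging the norm, and invoke the Riesz representation theorem to produce $v \in H_{1}$ with $\ell(x) = \langle x, v\rangle$ and $\|v\| \leq \eps^{-1}\|u\|$. It remains to verify $Tv = u$, i.e.\ $v \in \Dom((T\sta)\sta) = \Dom(T)$ (using that $T$ is closed, so $T\sta\sta = T$) and that $\langle T\sta w, v\rangle = \langle w, u\rangle$ for \emph{every} $w \in \Dom(T\sta)$, not merely for $w$ that also lie in $\Dom(S)$. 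For this, given $w \in \Dom(T\sta)$, decompose $w = w_{1} + w_{2}$ as before; again $w_{2} \in \ker(T\sta)$ gives $T\sta w = T\sta w_{1}$, and $w_{1} \in \ker(S)\cap \Dom(T\sta) \subset \Dom(T\sta)\cap \Dom(S)$ so the defining relation for $\ell$ applies to $w_{1}$, yielding $\langle T\sta w, v\rangle = \ell(T\sta w_{1}) = \langle w_{1}, u\rangle = \langle w, u\rangle$.

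The main technical nuisance I expect is not conceptual but bookkeeping with unbounded-operator domains: one must be careful that the orthogonal summand $w_{1}$ always lands inside $\Dom(T\sta) \cap \Dom(S)$ so that the a priori estimate can legitimately be applied, and separately that the final identity $\langle T\sta w, v\rangle = \langle w, u\rangle$ holds on all of $\Dom(T\sta)$. Both are handled by the same decomposition, with the crucial inputs being closedness of $S$ (to get the orthogonal decomposition) and closedness of $T$ (so that $T\sta\sta = T$ recognizes $v$ as an element of $\Dom(T)$).
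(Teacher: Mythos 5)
Your proof is correct, and it handles the genuine subtleties (that the orthogonal summand $w_{1}$ lands in $\Dom(T\sta)\cap\Dom(S)$, and that the identity $\langle T\sta w, v\rangle = \langle w, u\rangle$ must be verified on all of $\Dom(T\sta)$ before concluding $v \in \Dom(T\sta{}\sta) = \Dom(T)$). The paper does not prove this lemma but defers to standard references, and your argument — orthogonal decomposition of $H_{2}$ along $\ker(S)$, the a priori estimate to bound the functional $T\sta w \mapsto \langle w, u\rangle$, then Hahn--Banach and Riesz — is precisely the classical H\"ormander-style proof found there.
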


	\section{Proof of Theorem \ref{theo-Main}}
	
	\subsection{Dolbeault Resolution for the de Rham complex} \label{sec-deRham-Dolbeault}
	First, we ignore the issue of the SNC divisor and describe a nice resolution for $\gr^{p} \DR_{X}(E)$. Let $E$ be a complex polarized variation of Hodge structures on a Kähler manifold $X$. The graded de Rham complex $\gr^{p} \DR_{X}(E)$ of $E$ on $X$ is defined as follows
	$$  \Big[ E^{p, q} \xrightarrow{\theta} \Omega_{X}^{1} \otimes E^{p-1, q+1} \xrightarrow{\theta} \ldots \xrightarrow{\theta} \Omega_{X}^{n} \otimes E^{p-n, q+n} \Big] [n].$$
	
	By the relation $\db \theta + \theta \db = 0$, there is an anti-commuting morphism between each Dolbeault resolution. Therefore, the Dolbeault resolution for each sheaf $\Omega_{X}^{r} \otimes E^{p-r, q+r}$ combines to a double complex:
	
	$$ \begin{tikzcd}
		\cA_{X}^{0,n} \otimes E^{p,q} \ar[r, "\theta"]  & \cA_{X}^{1,n} \otimes E^{p-1, q+1} \ar[r, "\theta"] & \cdots \ar[r, "\theta"]  & \cA_{X}^{n,n} \otimes E^{p-n, q+n}\\
		\vdots \ar[r, "\theta"] \ar[u, "\db"] & \vdots \ar[r, "\theta"]\ar[u, "\db"] & \vdots \ar[r, "\theta"] \ar[u, "\db"] & \vdots \ar[u, "\db"] \\
		\cA_{X}^{0,1} \otimes E^{p,q} \ar[r, "\theta"] \ar[u, "\db"] & \cA_{X}^{1,1} \otimes E^{p-1, q+1} \ar[r, "\theta"] \ar[u, "\db"] & \cdots \ar[r, "\theta"] \ar[u, "\db"] & \cA_{X}^{n,1} \otimes E^{p-n, q+n} \ar[u, "\db"] \\
		\cA_{X}^{0,0} \otimes E^{p,q} \ar[r, "\theta"] \ar[u, "\db"] & \cA_{X}^{1,0} \otimes E^{p-1, q+1} \ar[r, "\theta"] \ar[u, "\db"] & \cdots \ar[r, "\theta"] \ar[u, "\db"] & \cA_{X}^{n,0} \otimes E^{p-n, q+n}. \ar[u, "\db"] \\
	\end{tikzcd}$$
	In particular, the total complex of this double complex is quasi-isomorphic to $\gr^{p} \DR_{X}(E)$. Consider
	$$ \bb{E}^{l} = \bigoplus_{r +s = l+n} \cA_{X}^{r, s} \otimes E^{p-r, q+r},$$
	which is the $l$-th entry of the total complex of this double complex and denote the boundary map by $\eth$. Then we obtain a complex
	$$ \bb{E}^{\bullet} : \bb{E}^{-n} \xrightarrow{\eth} \ldots \xrightarrow{\eth} \bb{E}^{0} \xrightarrow{\eth} \ldots \xrightarrow{\eth} \bb{E}^{n}$$
	with $\gr^{p}\DR_{X}(E) \simeq_{qis} (\bb{E}^{\bullet} , \eth)$. Each entry $\bb{E}^{l}$ has a natural hermitian structure by letting the direct summands be mutually orthogonal. After twisting by a line bundle $\cL$, we still have a quasi-isomorphism $\gr^{p} \DR_{X}(E) \otimes \cL \simeq_{qis} (\bb{E}^{\bullet} \otimes \cL, \eth)$.
	
	\subsection{More Identities} \label{sec-Identity}
	We introduce more identities which exploit the curvature formula for Hodge bundles. Throughout this section, all identities will be considered as formal identities, which means that the identity holds for all smooth and compactly supported sections.
	
	We fix a Kähler metric $\omega$ on $X$. Since $\cA_{X}^{r,s}$ has a hermitian structure, the operators $\nabla^{1,0}, \db, \theta,$ and $\varphi$ have adjoints which we denote by $(\nabla^{1,0})\sta $, $\db\sta$, $\theta\sta$ and $\varphi\sta$, respectively. These operators are of the following type
	\begin{align*}
		(\nabla^{1,0})\sta &\colon \cA_{X}^{r, s}(E^{p,q}) \to \cA_{X}^{r-1, s} (E^{p,q}), \\
		\db\sta & \colon \cA_{X}^{r,s} (E^{p,q}) \to \cA_{X}^{r, s-1} (E^{p,q}), \\
		\theta\sta &\colon \cA_{X}^{r, s}(E^{p,q}) \to \cA_{X}^{r-1, s} (E^{p+1, q-1}), \\
		\varphi\sta & \colon \cA_{X}^{r, s}(E^{p,q}) \to \cA_{X}^{r, s-1} (E^{p-1, q+1}).
	\end{align*}
	The first commutator relation is as follows.
	
	\begin{lemm} \label{lemm-Higgs-comm}
		With the above notation, we have
		$$
			 [\Lambda, \varphi] = i \theta\sta \qquad \textnormal{and} \qquad [\Lambda, \theta]= -i\varphi\sta.
		$$
	\end{lemm}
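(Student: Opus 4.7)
The crucial observation is that $\theta$ and $\varphi$ are $\cO_{X}$-linear: they act by wedging with $\operatorname{End}(E)$-valued forms of type $(1,0)$ and $(0,1)$ respectively (sending $E^{p,q}$ to $E^{p-1,q+1}$ and to $E^{p+1,q-1}$), and no differentiations are involved. Hence the commutators $[\Lambda,\varphi]$ and $[\Lambda,\theta]$ are themselves tensorial, and both identities can be checked fiberwise at an arbitrary point $x\in X$. The idea is then to reduce to the standard pointwise Kähler commutator identity for wedging with a $1$-form, applied summand by summand.

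First I would fix $x\in X$ and choose an orthonormal coframe $(\zeta_{1},\ldots,\zeta_{n})$ of type $(1,0)$ for $\omega$, so that $\omega_{x}=i\sum_{\mu}\zeta_{\mu}\wedge\bar\zeta_{\mu}$. Writing
$$ \theta_{x}=\sum_{\mu}\zeta_{\mu}\otimes A_{\mu},\qquad \varphi_{x}=\sum_{\mu}\bar\zeta_{\mu}\otimes B_{\mu}, $$
with $A_{\mu}\colon E^{p,q}_{x}\to E^{p-1,q+1}_{x}$ and $B_{\mu}\colon E^{p,q}_{x}\to E^{p+1,q-1}_{x}$, the relation $h_{E}(\theta u,v)=h_{E}(u,\varphi v)$ of Lemma \ref{lemm-curvature}(1) translates to $B_{\mu}=A_{\mu}\sta$ (hermitian adjoint on fibers). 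From this, the formal adjoints on forms factor through the classical contractions with the dual vectors $\zeta_{\mu}^{\#}$ and $\bar\zeta_{\mu}^{\#}$, combined with the $A_{\mu}\sta$ and $A_{\mu}$ respectively.

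The next step is to invoke the standard pointwise Kähler identities for wedging with $1$-forms in an orthonormal frame, namely $[\Lambda,\zeta_{\mu}\wedge\cdot]=i\,\iota_{\bar\zeta_{\mu}^{\#}}$ and $[\Lambda,\bar\zeta_{\mu}\wedge\cdot]=-i\,\iota_{\zeta_{\mu}^{\#}}$ (these are precisely the model identities used to prove Proposition \ref{prop-Kahler-comm}). Applying the second of these to each summand of $\varphi_{x}=\sum\bar\zeta_{\mu}\otimes A_{\mu}\sta$ and using the formula for $\theta\sta$ derived above yields $[\Lambda,\varphi]=i\theta\sta$ at $x$. The identity $[\Lambda,\theta]=-i\varphi\sta$ follows in the same way from the first model identity applied to $\theta_{x}=\sum\zeta_{\mu}\otimes A_{\mu}$; alternatively, one takes the formal adjoint of the first identity, using that $\Lambda\sta=L$ and keeping track of the sign $[A,B]\sta=-[A\sta,B\sta]$, and then uses $[L,\cdot]$ and $[\Lambda,\cdot]$ are related by the Lefschetz decomposition.

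The only real bookkeeping is to track the signs and the switch between $A_{\mu}$ and its adjoint $A_{\mu}\sta$; there is no analytic difficulty since everything is tensorial and linear algebra at a point. I expect the proof to be short and formally parallel to the derivation of the classical identities $[\Lambda,\bar\partial]=-i(\nabla^{1,0})\sta$, with $\theta$ playing the role of $\nabla^{1,0}$ and $\varphi$ the role of $\bar\partial$, the type shift on $E^{p,q}$ contributing only to the labeling of the target bundle.
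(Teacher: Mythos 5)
Your overall strategy is sound and is essentially a conceptual repackaging of what the paper does by brute force: both proofs fix a point, diagonalize $\omega$, use $h(\theta u,v)=h(u,\varphi v)$ to write $\varphi$ in terms of the adjoints of the components of $\theta$, and reduce to a linear-algebra computation with wedge and contraction operators. The paper carries out the multi-index bookkeeping by hand, whereas you factor it through the primitive commutators $[\Lambda,\zeta_{\mu}\wedge\cdot]$ and $[\Lambda,\bar\zeta_{\mu}\wedge\cdot]$; that is a legitimate and arguably cleaner route.

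However, the model identities you invoke have the wrong signs, and as written your plan proves the negation of the lemma. With the paper's conventions (which are pinned down by Proposition \ref{prop-Kahler-comm} and by the explicit formula for $\Lambda$ in the paper's proof), the correct primitive identities are $[\Lambda,\zeta_{\mu}\wedge\cdot]=-i\,(\bar\zeta_{\mu}\wedge\cdot)\sta$ and $[\Lambda,\bar\zeta_{\mu}\wedge\cdot]=+i\,(\zeta_{\mu}\wedge\cdot)\sta$; you can check this against $[\Lambda,D'']=-i\delta'$ by writing $D''=\sum_{\mu}(\bar\zeta_{\mu}\wedge\cdot)\nabla_{\bar\mu}$ and $\delta'=-\sum_{\mu}\nabla_{\bar\mu}(\zeta_{\mu}\wedge\cdot)\sta$ in a normal frame. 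Plugging your flipped versions into $\varphi=\sum_{\mu}(\bar\zeta_{\mu}\wedge\cdot)A_{\mu}\sta$ yields $[\Lambda,\varphi]=-i\theta\sta$ and likewise $[\Lambda,\theta]=+i\varphi\sta$. The closing analogy is the source of the trouble: if $\varphi$ "plays the role of $\db$" and $\theta$ that of $\nabla^{1,0}$, the classical identity $[\Lambda,\db]=-i(\nabla^{1,0})\sta$ would predict $[\Lambda,\varphi]=-i\theta\sta$, which is off by a sign from the lemma. The discrepancy is exactly that the formal adjoint of the first-order operator $\partial_{\mu}$ is $-\partial_{\bar\mu}$ (integration by parts contributes a minus sign), while the adjoint of the zeroth-order endomorphism $A_{\mu}$ is just $A_{\mu}\sta$ with no extra sign; so the Higgs-field case does \emph{not} inherit the overall sign of the classical identity. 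Once you replace your model identities with the correct ones, the argument closes; the alternative route via $\Lambda\sta=L$ and the Lefschetz decomposition that you sketch at the end is not needed and, as stated, only produces an identity for $[L,\theta\sta]$ rather than for $[\Lambda,\theta]$.
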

	{\it Proof.} All the operators are $\cC^{\infty}$-linear, hence we fix a point $x \in X$ and calculate everything pointwise. Therefore, we diagonalize the Kähler form $\omega$ and assume that $\omega = i \sum_{j=1}^{n} dz_{j} \wedge d\zbar_{j}$ at $x$ where $(z_{j})_{j=1}^{n}$ is a coordinate system centered at $x$. For $J = (j_{1},\ldots, j_{p})$ and $1 \leq l \leq n$, we use the following notation
	$$ dz_{l\lrcorner J} = i_{\de_{l}}(dz_{j_{1}}\wedge \cdots \wedge dz_{j_{p}}) = \begin{cases}
		0 & \textnormal{if } l \notin \{ j_{1},\cdots, j_{p}\} \\
		(-1)^{k-1} dz_{j_{1}} \wedge \cdots \wedge \widehat{dz_{j_{k}}} \wedge \cdots \wedge dz_{j_{p}} & \textnormal{if } l = j_{k}
	\end{cases}$$
	and
	$$ dz_{jJ} = dz_{j} \wedge dz_{J}. $$
	We define $d\zbar_{l \lrcorner K}$ and $d\zbar_{kK}$ analogously. Then for a $(p, q)$-form $u = \sum u_{J, K} dz_{J} \wedge d\zbar_{K}$, we have
	$$ \Lambda u = i (-1)^{|J|} \sum_{J, K, l} u_{J, K} dz_{l \lrcorner J} \wedge d\zbar_{l \lrcorner K}. $$
	After writing $\theta = \left(\sum_{j}  \theta_{j} dz_{j} \right) \wedge \bullet$, Lemma \ref{lemm-curvature} implies that the operators $\theta, \theta\sta, \varphi, \varphi\sta$ satisfy the following formulas:
	\begin{align*}
		\theta (u) & = \sum_{J, K} \sum_{j \in J^{c}} \theta_{j} (u_{J, K}) dz_{j J} \wedge d\zbar_{K} ,\\
		\theta\sta (u) & = \sum_{J, K} \sum_{j \in J} \theta_{j}\sta (u_{J, K}) dz_{j \lrcorner J} \wedge d\zbar_{K}, \\
		\varphi(u) & = (-1)^{|J|} \sum_{J, K} \sum_{k \in K^{c}} \theta_{k}\sta (u_{J, K}) dz_{J} \wedge d\zbar_{kK}, \\
		\varphi\sta(u) & = (-1)^{|J|} \sum_{J, K} \sum_{k \in K} \theta_{k}(u_{J, K}) dz_{J} \wedge d\zbar_{k \lrcorner K}.
	\end{align*}
	The rest of the proof is just brute force computation. If $u = \sum_{J, K} u_{J, K} dz_{J} \wedge d\zbar_{K}$, then
	\begin{equation} \label{eqaution-1}
	\begin{split}
				\Lambda \varphi (u) & = \Lambda \left((-1)^{|J|} \sum_{J, K} \sum_{k \in K^{c}} \theta_{k}\sta (u_{J, K}) dz_{J} \wedge d\zbar_{kK}  \right)\\
		& = i \Big[ \sum_{J, K} \sum_{k \in J \setminus K} \theta_{k}\sta (u_{J, K}) dz_{k \lrcorner J} \wedge d\zbar_{K} \\
		& \qquad + \sum_{J, K} \sum_{k \in K^{c}} \sum_{j \in J \cap K} \theta_{k}\sta (u_{J, K}) dz_{j \lrcorner J} \wedge d\zbar_{j \lrcorner kK} \Big].
	\end{split}
	\end{equation}
	
	Also, we have
	\begin{equation} \label{equation-2}
	\begin{split}
		\varphi \Lambda (u)& = i (-1)^{|J|} \varphi \left(\sum_{J, K} \sum_{j\in J\cap K} u_{J, K} dz_{j \lrcorner J} \wedge d\zbar_{j \lrcorner K} \right) \\
		& = -i \sum_{J, K} \sum_{j \in J \cap K} \Big[ \theta_{j}\sta (u_{J,K}) dz_{j \lrcorner J} \wedge d\zbar_{K} \\
		& \qquad + \sum_{k \in K^{c}} \theta_{k}\sta (u_{J, K}) dz_{j \lrcorner J} \wedge d\zbar_{k(j \lrcorner K)} \Big].
	\end{split}
	\end{equation}
	Using $d\zbar_{j \lrcorner kK} = - d\zbar_{k(j \lrcorner K)}$, we see that the second terms in \eqref{equation-2} and \eqref{eqaution-1} cancel each other out after subtracting and we have
	$$ (\Lambda \varphi - \varphi \Lambda)(u) = i \sum_{J, K} \sum_{k \in J} \theta_{k}\sta (u_{J, K}) dz_{k \lrcorner J} \wedge d\zbar_{K} = i \theta\sta (u).$$
	This concludes the first part of the proof. We similarly obtain the second equality, as follows:
	
	\begin{equation} \label{equation-3}
	\begin{split}
				\theta \Lambda u & = i (-1)^{|J|}\theta \left( \sum_{J, K} \sum_{j \in J \cap K} u_{J,K} dz_{j \lrcorner J} \wedge d\zbar_{j \lrcorner K} \right)\\
		& = i (-1)^{|J|} \Big[ \sum_{J, K} \sum_{j \in J \cap K} \theta_{j} (u_{J, K}) dz_{J} \wedge d\zbar_{j \lrcorner K} \\
		& \qquad + \sum_{J, K} \sum_{j \in J \cap K} \sum_{k \in J^{c}} \theta_{k} (u_{J, K}) dz_{k(j \lrcorner J)} \wedge d\zbar_{j \lrcorner K} \Big]
	\end{split}
	\end{equation}
	and
	\begin{equation} \label{equation-4}
	\begin{split}
		\Lambda \theta u & = \Lambda \left( \sum_{J, K} \sum_{k \in J^{c}} \theta_{k}(u_{J, K}) dz_{kJ} \wedge d\zbar_{K} \right) \\
		& = i (-1)^{|J| + 1} \Big[ \sum_{J, K} \sum_{k \in J^{c} \cap K} \theta_{k}(u_{J, K}) dz_{J} \wedge d\zbar_{k \lrcorner K} \\
		& \qquad + \sum_{J, K} \sum_{k \in J^{c}} \sum_{j \in J \cap K} \theta_{k}(u_{J, K}) dz_{j \lrcorner kJ} \wedge d\zbar_{j \lrcorner K} \Big].
	\end{split}
	\end{equation}

	Again, using $dz_{k (j \lrcorner J)} = - dz_{j \lrcorner kJ}$, we see that the second terms in \eqref{equation-3} and \eqref{equation-4} cancel each other out after subtracting and we get
	$$ (\Lambda \theta -\theta \Lambda ) u = i(-1)^{|J|+1} \sum_{J, K} \sum_{j \in K} \theta_{j} (u_{J, K}) dz_{J} \wedge d\zbar_{j \lrcorner K} =  -i \varphi\sta (u). \qquad \square $$

	\begin{lemm} \label{lemm-VHS-BKN}
		Let $E$ be a variation of Hodge structures on a Kähler manifold $(X, \omega)$. If $\Theta_{E^{p,q}}$ is the curvature operator of $E^{p,q}$ with respect to the Hodge metric, then we have the following identity:
		$$[ i\Theta_{E^{p,q}} , \Lambda] = - \theta\sta \theta - \theta \theta\sta + \varphi\varphi\sta + \varphi\sta \varphi.$$
	\end{lemm}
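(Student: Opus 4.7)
\textit{Proof plan.} The identity is a formal consequence of the two inputs already on the table: the Kähler-type commutators between $\Lambda$ and the Higgs field from Lemma~\ref{lemm-Higgs-comm}, together with the curvature formula $\Theta_{E^{p,q}} = -(\theta\varphi + \varphi\theta)$ from Lemma~\ref{lemm-curvature}(3). The plan is to rewrite $i\Theta_{E^{p,q}} = -i(\theta\varphi+\varphi\theta)$, and then compute $[-i(\theta\varphi+\varphi\theta),\Lambda]$ by moving $\Lambda$ past each factor using Lemma~\ref{lemm-Higgs-comm} in the equivalent forms $\Lambda\theta = \theta\Lambda - i\varphi^{*}$ and $\Lambda\varphi = \varphi\Lambda + i\theta^{*}$.

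Concretely, I would first compute
\begin{equation*}
\Lambda\theta\varphi = (\theta\Lambda - i\varphi^{*})\varphi = \theta(\varphi\Lambda + i\theta^{*}) - i\varphi^{*}\varphi = \theta\varphi\Lambda + i\theta\theta^{*} - i\varphi^{*}\varphi,
\end{equation*}
so that $[\theta\varphi,\Lambda] = -i\theta\theta^{*}+i\varphi^{*}\varphi$, and analogously
\begin{equation*}
\Lambda\varphi\theta = (\varphi\Lambda + i\theta^{*})\theta = \varphi(\theta\Lambda - i\varphi^{*}) + i\theta^{*}\theta = \varphi\theta\Lambda - i\varphi\varphi^{*} + i\theta^{*}\theta,
\end{equation*}
giving $[\varphi\theta,\Lambda] = i\varphi\varphi^{*} - i\theta^{*}\theta$. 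Adding these two and multiplying by $-i$ produces exactly
\begin{equation*}
[i\Theta_{E^{p,q}},\Lambda] = -\theta^{*}\theta - \theta\theta^{*} + \varphi\varphi^{*} + \varphi^{*}\varphi,
\end{equation*}
as desired.

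There is no real obstacle here beyond sign bookkeeping: the only subtlety is using the correct form of the Higgs-field Kähler identity (with the signs as computed in Lemma~\ref{lemm-Higgs-comm}) and remembering that in Lemma~\ref{lemm-curvature}(3) the curvature of the Hodge metric on $E^{p,q}$ carries a minus sign relative to the naive guess, which is exactly what makes the right-hand side come out with the signs $-\theta^{*}\theta-\theta\theta^{*}$ on the ``$\theta$'' side and $+\varphi\varphi^{*}+\varphi^{*}\varphi$ on the ``$\varphi$'' side. Since all operators are $\cC^{\infty}$-linear and the Kähler identities hold pointwise, the identity is a formal manipulation valid on all smooth compactly supported sections, which is what we need for the later $L^{2}$-estimates.
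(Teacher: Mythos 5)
Your proposal is correct and follows essentially the same route as the paper: both invoke the curvature formula $\Theta_{E^{p,q}} = -(\theta\varphi+\varphi\theta)$ from Lemma~\ref{lemm-curvature}(3) and the commutators of Lemma~\ref{lemm-Higgs-comm}, then carry out the same formal sign bookkeeping (the paper merely groups the terms as $i(\theta[\Lambda,\varphi]+\varphi[\Lambda,\theta]+[\Lambda,\theta]\varphi+[\Lambda,\varphi]\theta)$ rather than computing $[\theta\varphi,\Lambda]$ and $[\varphi\theta,\Lambda]$ separately). Your final expression is the correct one stated in the lemma; note that the paper's own last display contains a typo, repeating $\varphi\varphi^{*}$ where $\varphi^{*}\varphi$ should appear.
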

	{\it Proof.}  We compute	\begin{align*}
		[ i \Theta_{E^{p,q}} , \Lambda ] & = [ -i(\theta \varphi + \varphi \theta), \Lambda ] \qquad \because \text{Lemma } \ref{lemm-curvature} \\
		& = i (-\theta \varphi \Lambda - \varphi \theta \Lambda + \Lambda \theta \varphi + \Lambda \varphi \theta) \\
		& = i \Big( (- \theta \varphi \Lambda + \theta \Lambda \varphi) + (-\varphi \theta \Lambda + \varphi \Lambda \theta) + (\Lambda \theta \varphi - \theta \Lambda \varphi) + (\Lambda \varphi \theta - \varphi \Lambda \theta) \Big) \\
		& = i ( \theta [\Lambda, \varphi] + \varphi [\Lambda, \theta] + [\Lambda, \theta] \varphi + [\Lambda, \varphi] \theta ) \qquad \because \text{Lemma } \ref{lemm-Higgs-comm} \\
		& = - \theta \theta\sta + \varphi\varphi\sta + \varphi\varphi\sta - \theta\sta \theta.  \qquad \square
	\end{align*}
	
	\begin{prop} \label{prop-deRham-Apriori}
		Let $E$ be a complex polarized variation of Hodge structures on $X$ and let $\bb{E} $ be the Dolbeault resolution for $\gr^{p}\DR_{X}(E)$ as in Section \ref{sec-deRham-Dolbeault}. Let $\cL$ be a line bundle on $X$ with a smooth hermitian metric and let $\Theta_{\cL}$ be the curvature of $\cL$. For $\bfu = (u\uind{r})\in \bb{E}^{l} \otimes \cL$, where each $u\uind{r} \in \cA_{X}^{r, n+l-r}(E^{p-r, q+r}) \otimes \cL$, we have the following formal identity:
		$$  \norm{\eth \bfu}^{2} + \norm{\eth\sta \bfu}^{2} = \sum_{r} \langle [i \Theta_{\cL} , \Lambda] u\uind{r} ,u\uind{r} \rangle + \sum_{r} \norm{\nabla^{1,0} u\uind{r} + \varphi u\uind{r+1}}^{2} + \sum_{r} \norm{\varphi\sta u\uind{r} + (\nabla^{1,0})\sta u\uind{r+1}}^{2}. $$
	\end{prop}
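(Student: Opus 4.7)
The plan is to expand $\norm{\eth\bfu}^{2}+\norm{\eth\sta\bfu}^{2}$ pointwise using the decomposition $\eth=\db+\theta$ and $\eth\sta=\db\sta+\theta\sta$, then to feed the diagonal pieces into the Bochner-Kodaira-Nakano identity and reduce the cross pieces with the commutator identities from Section \ref{sec-Identity}. Writing out each summand of $\bb{E}^{l}\otimes\cL$ and expanding the squares yields
\[
\norm{\eth\bfu}^{2}+\norm{\eth\sta\bfu}^{2}=\sum_{r}\bigl(\norm{\db u\uind{r}}^{2}+\norm{\db\sta u\uind{r}}^{2}+\norm{\theta u\uind{r}}^{2}+\norm{\theta\sta u\uind{r}}^{2}\bigr)+(\mathrm{cross}),
\]
where after reindexing the cross contribution is $2\sum_{r}\Real\langle\db u\uind{r+1},\theta u\uind{r}\rangle+2\sum_{r}\Real\langle\db\sta u\uind{r},\theta\sta u\uind{r+1}\rangle$.

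For the diagonal, I apply Proposition \ref{prop-BKN} to the hermitian bundle $E^{p-r,q+r}\otimes\cL$, obtaining $\norm{\db u\uind{r}}^{2}+\norm{\db\sta u\uind{r}}^{2}=\norm{\nabla^{1,0}u\uind{r}}^{2}+\norm{(\nabla^{1,0})\sta u\uind{r}}^{2}+\langle[i\Theta_{E^{p-r,q+r}}+i\Theta_{\cL},\Lambda]u\uind{r},u\uind{r}\rangle$. Lemma \ref{lemm-VHS-BKN} then rewrites $[i\Theta_{E^{p-r,q+r}},\Lambda]$ as $-\theta\sta\theta-\theta\theta\sta+\varphi\varphi\sta+\varphi\sta\varphi$, so the $\theta,\theta\sta$-diagonal contributions cancel exactly against the negative part of the VHS curvature, leaving
\[
\sum_{r}\bigl(\norm{\nabla^{1,0}u\uind{r}}^{2}+\norm{(\nabla^{1,0})\sta u\uind{r}}^{2}+\norm{\varphi u\uind{r}}^{2}+\norm{\varphi\sta u\uind{r}}^{2}\bigr)+\sum_{r}\langle[i\Theta_{\cL},\Lambda]u\uind{r},u\uind{r}\rangle.
\]
This matches the diagonal of the two squared norms on the right-hand side of the claim once one reindexes $\sum_{r}\norm{\varphi u\uind{r+1}}^{2}=\sum_{r}\norm{\varphi u\uind{r}}^{2}$ and similarly for $(\nabla^{1,0})\sta u\uind{r+1}$.

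The heart of the argument is the cross-term identity. Moving adjoints, the cross contribution condenses into $2\sum_{r}\Real\langle(\db\sta\theta+\theta\db\sta)u\uind{r},u\uind{r+1}\rangle$, so the whole proof reduces to the commutator identity
\[
\db\sta\theta+\theta\db\sta \;=\; \nabla^{1,0}\varphi\sta+\varphi\sta\nabla^{1,0}.
\]
I will derive this by substituting the Kähler identity $\db\sta=-i[\Lambda,\nabla^{1,0}]$ from Proposition \ref{prop-Kahler-comm}, using $\nabla^{1,0}\theta+\theta\nabla^{1,0}=0$ from Lemma \ref{lemm-flat-identity} to pull $\nabla^{1,0}$ across $\theta$, and invoking $[\Lambda,\theta]=-i\varphi\sta$ from Lemma \ref{lemm-Higgs-comm} to convert the residual $\Lambda\theta$-pieces into $\varphi\sta$-pieces. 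The $\Lambda$-dependent parts then cancel and only $\nabla^{1,0}\varphi\sta+\varphi\sta\nabla^{1,0}$ survives. Moving one adjoint back produces precisely the cross inner-product terms needed to complete the two squared norms $\norm{\nabla^{1,0}u\uind{r}+\varphi u\uind{r+1}}^{2}$ and $\norm{\varphi\sta u\uind{r}-(\nabla^{1,0})\sta u\uind{r+1}}^{2}$. The main obstacle I anticipate is the sign bookkeeping at this last step: the $(+,-)$ pattern of the two squared norms is dictated entirely by the interplay of the signs in Proposition \ref{prop-Kahler-comm} and Lemma \ref{lemm-Higgs-comm} with the anticommutation relations of Lemma \ref{lemm-flat-identity}, and any slip in those signs flips one of the mixed cross contributions.
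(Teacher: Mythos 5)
Your proposal is correct and follows essentially the same route as the paper: the same diagonal/off-diagonal decomposition of $\norm{\eth \bfu}^{2}+\norm{\eth\sta \bfu}^{2}$, the Bochner--Kodaira--Nakano identity combined with Lemma \ref{lemm-VHS-BKN} to cancel the $\theta\theta\sta+\theta\sta\theta$ contributions on the diagonal, and a single anticommutator identity to convert the cross terms into the mixed products completing the two squares. The only (cosmetic) difference is that you establish $\db\sta\theta+\theta\db\sta=\nabla^{1,0}\varphi\sta+\varphi\sta\nabla^{1,0}$ via $[\Lambda,\nabla^{1,0}]$ and $[\Lambda,\theta]=-i\varphi\sta$, while the paper derives the formally adjoint identity $\db\theta\sta+\theta\sta\db=(\nabla^{1,0})\sta\varphi-\varphi(\nabla^{1,0})\sta$ via $[\Lambda,\db]=-i(\nabla^{1,0})\sta$ and $[\Lambda,\varphi]=i\theta\sta$; the sign of the second completed square that you flag as delicate is equally delicate in the paper's own computation and is immaterial for the intended application, since either sign yields a sum of squares.
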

	{\it Proof.} The expression $\langle \lap_{\eth} \bfu , \bfu \rangle$ has the diagonal terms and the off-diagonal terms as follows:
	\begin{align*}
		\norm{\eth \bfu}^{2} + \norm{\eth\sta \bfu}^{2} & = \sum_{r} \langle \lap_{\db} u\uind{r} + (\theta\theta\sta + \theta\sta \theta) u\uind{r} , u \uind{r} \rangle \\
		& \quad + \sum_{r} \langle (\theta \db\sta + \db\sta \theta) u\uind{r} , u\uind{r+1} \rangle + \langle  (\db \theta\sta + \theta\sta \db) u\uind{r+1}, u\uind{r} \rangle.
	\end{align*}
	First, we examine the diagonal term. The classical Bochner--Kodaira--Nakano identity gives
	$$ \lap_{\db} = \lap_{\nabla^{1, 0}} + [i \Theta_{E^{p, q} \otimes \cL} , \Lambda],$$
	where $\nabla^{1, 0} + \db$ is the Chern connection for the bundle $E^{p, q} \otimes \cL$. The curvature of $E^{p,q} \otimes \cL$ is given by
	$$ \Theta_{E^{p, q} \otimes \cL} = \id_{E^{p, q}} \otimes \Theta_{\cL} + \Theta_{E^{p, q}} \otimes \id_{\cL}.$$
	Summing all these term and using Lemma \ref{lemm-VHS-BKN}, we have
	\begin{align*}
		& \langle (\lap_{\db} + \theta\theta\sta + \theta\sta \theta) u\uind{r} , u\uind{r} \rangle\\
		&  = \langle [i\Theta_{\cL}, \Lambda] u\uind{r}, u\uind{r} \rangle + \norm{\nabla^{1,0} u\uind{r}}^{2} + \norm{ (\nabla^{1, 0})\sta u\uind{r} }^{2} + \norm{\varphi u\uind{r}}^{2} + \norm{\varphi\sta u\uind{r}}^{2} .
	\end{align*}
	We next examine the off-diagonal term. Note that 
	\begin{equation*}
		\langle (\theta \db\sta + \db\sta \theta)u\uind{r} , u\uind{r+1} \rangle = \overline{ \langle  (\db \theta\sta + \theta\sta \db) u\uind{r+1}, u\uind{r} \rangle }
	\end{equation*}
 	and we have
	\begin{align*}
		&\db \theta \sta + \theta \sta \db \\
		& = -i (\db \Lambda \varphi - \db \varphi \Lambda + \Lambda \varphi\db - \varphi \Lambda \db) \qquad \because\text{Lemma \ref{lemm-Higgs-comm}} \\
		& = (-i) \left( \Lambda \db \varphi + i (\nabla^{1,0})\sta \varphi - \db \varphi \Lambda + \Lambda \varphi \db - \varphi \db \Lambda + i \varphi (\nabla^{1,0})\sta \right) \qquad \because [\Lambda, \db] = -i (\nabla^{1,0})\sta \\
		& = (\nabla^{1,0})\sta \varphi + \varphi (\nabla^{1,0})\sta \qquad \because \db \varphi + \varphi \db = 0.
	\end{align*}
	Therefore, 
	$$ \langle u\uind{r}, (\db \theta\sta + \theta\sta \db) u\uind{r+1} \rangle = \langle \nabla^{1,0} u\uind{r} , \varphi u\uind{r+1} \rangle + \langle \varphi\sta u\uind{r} , (\nabla^{1,0})\sta u\uind{r+1} \rangle. $$
	Adding all up, we get the desired identity. \hfill{$\square$}

	\subsection{$L^{2}$-existence results}
	The following $L^{2}$-existence result will serve as a key tool for the proof of cohomology vanishing. The first proposition is a global version of the $L^{2}$-existence result. One can also find a similar result in \cite{deng2022vanishing}*{Corollary 2.7} in the language of harmonic bundles, but we prefer to include the proof for completeness.
	
	\begin{prop} \label{prop-L2-global}
		Let $(X, \omega)$ be a Kähler manifold (not necessarily compact) and let $\cL$ be a line bundle with a smooth hermitian metric $h_{\cL}$. Let $E$ be a complex polarized variation of Hodge structures on $X$. Furthermore, we assume that
		\begin{enumerate}
			\item The geodesic distance $\delta_{\omega}$ is complete on $X$, i.e., $(X, \delta_{\omega})$ is complete as a metric space.
			\item The norm of the Higgs field $|\theta|_{h, \omega}^{2}$ is globally bounded.
			\item There exists $\eps > 0$ such that $\langle [i\Theta_{\cL}, \Lambda_{\omega}] u, u \rangle \geq  \eps \norm{u}_{\omega}^{2}$ for all smooth and compactly supported $(r, s)$-forms $u$ for $r + s = n + l$.
		\end{enumerate}
		Let $\bfu$ be a measurable section with values in $\bb{E}^{l}$ such that $\eth \bfu = 0$. Provided that the right hand side of the expression below is finite, there exists a measurable section $\mathbf{v}$ with values in $\bb{E}^{l-1}$ satisfying $\eth \mathbf{v} = \bfu$ and the following inequality
		$$ \int_{X} \norm{\mathbf{v}}_{h, \omega}^{2} dV_{\omega} \leq \frac{1}{\eps} \int_{X} \norm{\bfu}_{h, \omega}^{2} dV_{\omega}.$$
	\end{prop}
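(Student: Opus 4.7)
The plan is to deduce the statement from the a priori identity in Proposition \ref{prop-deRham-Apriori} combined with the abstract Hilbert space existence principle in Lemma \ref{lemm-Hilbert-technique}, in the spirit of Andreotti--Vesentini and Hörmander. First I would set up the Hilbert space framework. Let $H_l$ denote the space of $L^{2}$-measurable sections of $\mathbb{E}^{l}\otimes \cL$ with respect to $h$ and $\omega$, and regard $\eth\colon H_{l-1}\to H_{l}$ and $\eth\colon H_{l}\to H_{l+1}$ as closed, densely defined unbounded operators with their maximal extensions. Then $\eth^{2}=0$ on smooth compactly supported sections, and the maximal closed extensions inherit $\eth\circ\eth=0$ in the obvious sense. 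By Proposition \ref{prop-deRham-Apriori} applied to any smooth compactly supported $\mathbf{u}$, together with hypothesis (3), we have the clean a priori estimate
\begin{equation*}
\|\eth\mathbf{u}\|^{2}+\|\eth^{\ast}\mathbf{u}\|^{2}\;\ge\;\sum_{r}\langle[i\Theta_{\cL},\Lambda_{\omega}]u^{(r)},u^{(r)}\rangle\;\ge\;\eps\,\|\mathbf{u}\|^{2},
\end{equation*}
since the other terms on the right of the identity in Proposition \ref{prop-deRham-Apriori} are non-negative.

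The core step is to promote this estimate from smooth compactly supported sections to all $\mathbf{u}\in\Dom(\eth)\cap\Dom(\eth^{\ast})$. For this I would run the standard cut-off density argument. Using completeness of the geodesic distance $\delta_{\omega}$ on $X$, one can build a sequence of smooth cut-off functions $\chi_{k}\colon X\to[0,1]$, compactly supported, such that $\chi_{k}\uparrow 1$ pointwise and $|d\chi_{k}|_{\omega}\to 0$ uniformly. The key observation is that the operator $\eth$ decomposes as $\db+\theta$ on our double complex, and its formal adjoint is $\db^{\ast}+\theta^{\ast}$. Multiplication by $\chi_{k}$ commutes with the algebraic operators $\theta,\theta^{\ast}$ and produces an error term $[\eth,\chi_{k}]=\bar\partial\chi_{k}\wedge\bullet$ for the differential part, whose operator norm is controlled by $|d\chi_{k}|_{\omega}$. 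Hence $\chi_{k}\mathbf{u}$ and a further regularization by convolution provide smooth compactly supported approximations in the graph norm of $\eth\oplus\eth^{\ast}$. Hypothesis (2), the global bound on $|\theta|_{h,\omega}^{2}$, enters exactly here: it guarantees that multiplication by $\theta$ and $\theta^{\ast}$ is a bounded operator on $H_{\bullet}$, so that approximation of $\db$ and $\db^{\ast}$ lifts to an approximation of $\eth$ and $\eth^{\ast}$ without losing control. Passing to the limit yields the same inequality for every $\mathbf{u}\in\Dom(\eth)\cap\Dom(\eth^{\ast})$.

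With this inequality in hand on the full domain, I invoke Lemma \ref{lemm-Hilbert-technique} with $H_{1}=H_{l-1}$, $H_{2}=H_{l}$, $H_{3}=H_{l+1}$, $T=\eth$, $S=\eth$. For any measurable $\mathbf{u}\in H_{l}$ with $\eth\mathbf{u}=0$ and finite $L^{2}$-norm, the lemma produces $\mathbf{v}\in H_{l-1}$ with $\eth\mathbf{v}=\mathbf{u}$ and $\|\mathbf{v}\|^{2}\le\eps^{-1}\|\mathbf{u}\|^{2}$, which is precisely the required bound once one notes that the constant $\eps$ in Lemma \ref{lemm-Hilbert-technique} corresponds to the square root of the one in hypothesis (3); a trivial rescaling (or rewriting the hypothesis with $\eps^{2}$) brings the two conventions in line.

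The main obstacle is the density/approximation step: namely, verifying that the cut-off plus smoothing procedure really does approximate an arbitrary element of $\Dom(\eth)\cap\Dom(\eth^{\ast})$ in the graph norm. The usual complication is the presence of the Higgs field, which is not a differential operator but an algebraic one of potentially unbounded norm near infinity; this is exactly why hypothesis (2) and the completeness hypothesis (1) are both imposed. Once these two ingredients are in place, the approximation is routine and everything else is the standard $L^{2}$-Hodge machinery.
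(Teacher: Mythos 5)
Your proposal is correct and follows essentially the same route as the paper: establish the a priori inequality for smooth compactly supported sections via Proposition \ref{prop-deRham-Apriori} and hypothesis (3), use the global bound on $|\theta|_{h,\omega}^{2}$ to reduce the graph-norm density question for $\eth$ and $\eth\sta$ to the standard one for $\db$ and $\db\sta$ on a complete Kähler manifold, and then apply Lemma \ref{lemm-Hilbert-technique}. The only difference is cosmetic: you sketch the cut-off/mollification proof of the density statement, whereas the paper simply invokes it as a known consequence of completeness.
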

	{\it Proof.} The only non-trivial part is to approximate measurable sections $\bfu$ by smooth and compactly supported sections in a desirable way. Note that we have an a priori inequality $\norm{\eth \bfu}^{2} + \norm{\eth\sta \bfu}^{2} \geq \eps \norm{\bfu}^{2}$ for smooth and compactly supported sections. Since $|\theta|_{h, \omega}^{2}$ is globally bounded, 
	$$ \theta \colon L^{2} (X, \cA_{X}^{r, s} \otimes E^{p,q}) \to L^{2}(X, \cA_{X}^{r+1, s} \otimes E^{p,q})$$
	is a bounded operator. Therefore, for $L^{2}$-sections $\bfu = (u\uind{r})$, the section $\eth \bfu$ is $L^{2}$ if and only if each derivative $\db u\uind{r}$ is $L^{2}$. Similarly, $\eth\sta \bfu$ is $L^{2}$ if and only if for each $r$, the section $\db\sta u\uind{r}$ is $L^{2}$. Moreover, provided a sequence $\bfu_{\nu} \to \bfu$ in $L^{2}$, we have $\eth \bfu_{\nu} \to \eth \bfu$ in $L^{2}$ if and only if for each $r$, $\db u_{\nu}\uind{r} \to \db u\uind{r}$ in $L^{2}$. Similarly, $\eth\sta \bfu_{\nu} \to \eth\sta \bfu$ in $L^{2}$ if and only if $\db\sta u_{\nu}\uind{r} \to \db\sta u\uind{r}$ in $L^{2}$ for each $r$. As in the proof of \cite{demailly2012analytic}*{Theorem 5.1}, the completeness of $(X, \delta_{\omega})$ tells us that the space of compactly supported smooth forms is dense in $\Dom(\db) \cap \Dom(\db\sta)$ under the graph norm
	$$ u \mapsto (\norm{u}^{2} + \norm{\db u}^{2}  +\norm{\db\sta u}^{2})^{1/2} .$$
	Therefore, the a priori inequality 
	$$ \norm{\eth \bfu}^{2} + \norm{\eth\sta \bfu}^{2} \geq \eps \norm{\bfu}^{2}$$
	also holds for measurable sections such that $\bfu$, $\eth\bfu$, and $\eth\sta \bfu$ are all $L^{2}$. The remaining part of the proof follows from Lemma \ref{lemm-Hilbert-technique}. \hfill{$\square$}
	
	Also, we have a local version of the $L^{2}$-existence result which will be used in the construction of the $L^{2}$-Dolbeault resolution. The following result is motivated by \cite{sabbah2022degenerating}*{\S 159}, using the fact that even though the curvature of a Hodge bundle is not positive, one can bound the negative contribution of the curvature by the Higgs field estimates. Hence, after twisting with a suitable weight, one can solve the $\db$-equation.
	
	\begin{prop} \label{prop-L2-local}
		Equip $\Omega\sta = (\Delta\sta)^{l} \times \Delta^{n-l}$ with the Poincaré metric $\omega_{\Poin}$ defined in Section \ref{sec-Higgs-field-bound}. Let $E$ be a complex polarized variation of Hodge structures on $\Omega\sta$. Let $C > 0$ be a number such that $\norm{\theta}^{2}_{\omega_{\Poin}} < C$ on $\Omega\sta$. Define $\eta\colon\Omega\sta \to \RR$ by the following formula, for $a_{j} \in \RR$ and $b_{j} > C + 2$:
		$$ e^{-\eta} = \prod_{j = 1}^{l} |z_{j}|^{2a_{j}} (-\log |z_{j}|^{2})^{b_{j}} \prod_{j = l+1}^{n} e^{- b_{j} |z_{j}|^{2}}.$$
		If $u$ is an $(r, s)$-form with values in $E^{p,q}$, with measurable coefficients such that $\db u = 0$ and
		$$ \int_{\Omega\sta} \norm{u}_{h, \omega_{\Poin}}^{2} e^{-\eta} dV_{\Poin} < + \infty,$$
		then there exists an $(r, s-1)$-form with values in $E^{p,q}$, with measurable coefficients, such that $u = \db v$ and
		$$ \int_{\Omega\sta} \norm{v}_{h, \omega_{\Poin}}^{2} e^{-\eta} dV_{\Poin} \leq \int_{\Omega\sta} \norm{u}_{h, \omega_{\Poin}}^{2} e^{-\eta} dV_{\Poin}.$$
	\end{prop}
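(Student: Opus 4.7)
The plan is to apply the standard Hörmander $L^{2}$-existence technique (Lemma~\ref{lemm-Hilbert-technique}) to the operator $\db$ acting on $L^{2}$-forms valued in $E^{p,q}$ with the weight $e^{-\eta}$, which I view as a modification of the hermitian metric: give $E^{p,q}$ the new metric $h \cdot e^{-\eta}$, so that the weighted $L^{2}$ norm becomes the ordinary $L^{2}$ norm with respect to this twisted metric. The Poincaré metric $\omega_{\Poin}$ is complete on $\Omega\sta$ (a radial path into each puncture $z_{j} = 0$ has infinite $\omega_{\Poin}$-length), so by the classical density theorem for complete Kähler manifolds it suffices to establish the a priori inequality
$$\norm{\db u}^{2} + \norm{\db\sta u}^{2} \geq \norm{u}^{2}$$
for smooth compactly supported $(r,s)$-forms $u$ valued in $E^{p,q}$, and then invoke Lemma~\ref{lemm-Hilbert-technique}.

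To produce this a priori inequality, I apply the Bochner-Kodaira-Nakano identity (Proposition~\ref{prop-BKN}) to the twisted bundle $(E^{p,q}, h e^{-\eta})$, whose Chern curvature equals $\Theta_{E^{p,q}} + \de\db\eta \cdot \id_{E^{p,q}}$. This gives
$$\norm{\db u}^{2} + \norm{\db\sta u}^{2} \geq \langle [i\Theta_{E^{p,q}}, \Lambda] u, u\rangle + \langle [i\de\db\eta, \Lambda] u, u\rangle.$$
The first term is handled by Lemma~\ref{lemm-VHS-BKN}: the decomposition $[i\Theta_{E^{p,q}}, \Lambda] = -\theta\sta\theta - \theta\theta\sta + \varphi\varphi\sta + \varphi\sta\varphi$ together with the Higgs-field bound $\norm{\theta u}^{2}_{h,\omega_{\Poin}} \leq C\norm{u}^{2}$ from Theorem~\ref{theo-HiggsBound} (and the corresponding bound for $\theta\sta$ by adjointness) yields $\langle [i\Theta_{E^{p,q}}, \Lambda] u, u\rangle \geq -2C|u|^{2}$ pointwise, since $\varphi\varphi\sta + \varphi\sta\varphi$ is a nonnegative operator.

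For the weight contribution, I compute $i\de\db\eta$ directly. Pluriharmonicity of $\log|z_{j}|^{2}$ eliminates the $a_{j}$-terms outright, and the elementary computations
$$i\de\db\bigl(-b_{j}\log(-\log|z_{j}|^{2})\bigr) = b_{j} \cdot \frac{idz_{j}\wedge d\zbar_{j}}{|z_{j}|^{2}(-\log|z_{j}|^{2})^{2}}, \qquad i\de\db(b_{j}|z_{j}|^{2}) = b_{j} \cdot idz_{j} \wedge d\zbar_{j}$$
show that $i\de\db\eta$ is diagonal in the same frame as $\omega_{\Poin}$ with eigenvalues precisely $b_{1}, \dots, b_{n}$, each exceeding $C+2$. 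Applying Lemma~\ref{lemm-Girbau} with these eigenvalues gives the positive lower bound on $\langle [i\de\db\eta,\Lambda] u, u\rangle$; the numerical margin $b_{j} > C+2$ is calibrated precisely so that this bound dominates the $-2C$ deficit from the curvature of $E^{p,q}$ and leaves enough to secure $\norm{\db u}^{2} + \norm{\db\sta u}^{2} \geq \norm{u}^{2}$. Lemma~\ref{lemm-Hilbert-technique} then produces the desired $v$ with the stated $L^{2}$ estimate.

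The main obstacle is conceptual rather than computational: the curvature $\Theta_{E^{p,q}}$ of a Hodge bundle is \emph{never} Nakano positive on its own, so one cannot apply Kodaira-Nakano to $E^{p,q}$ directly. The two key inputs---Lemma~\ref{lemm-VHS-BKN}, which isolates the negative part of $[i\Theta_{E^{p,q}},\Lambda]$ as the Higgs operators $-\theta\sta\theta - \theta\theta\sta$, and the Mochizuki estimate (Theorem~\ref{theo-HiggsBound}), which bounds these uniformly once one measures with $\omega_{\Poin}$---together render the negative contribution controllable. The weight $e^{-\eta}$ is then tailored (through $b_{j} > C+2$) to manufacture positive curvature exceeding the Higgs-field defect while simultaneously being compatible with the Poincaré geometry near each boundary component $z_{j}=0$.
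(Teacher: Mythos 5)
There is a genuine gap at the heart of your argument: the commutator $[i\de\db\eta,\Lambda_{\omega_{\Poin}}]$ is \emph{not} positive on $(r,s)$-forms when $r+s\leq n$. Since $i\de\db\eta$ is diagonal with eigenvalues $b_{1},\ldots,b_{n}$ relative to $\omega_{\Poin}$, Lemma \ref{lemm-Girbau} gives the lower bound $\gamma_{1}+\cdots+\gamma_{s}-\gamma_{r+1}-\cdots-\gamma_{n}$, which for roughly equal $b_{j}$ is about $(r+s-n)\min_{j}b_{j}$; this is $\leq 0$ whenever $r+s\leq n$, and even for $r+s=n+1$ it is about $(C+2)-2C<0$ once $C>2$. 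So no choice of $b_{j}$ makes the weight term dominate the Higgs-field deficit in general bidegree, and the a priori inequality $\norm{\db u}^{2}+\norm{\db\sta u}^{2}\geq\norm{u}^{2}$ fails to follow. The proposition must hold for all $(r,s)$ (it is used in Proposition \ref{prop-L2-deRham} for every column $r$), so this is not a corner case. The paper's proof avoids this by first treating $(n,s)$-forms, where $u=\sum_{K}u_{K}\,dz_{[n]}\wedge d\zbar_{K}$ and the commutator acts by the genuinely positive factor $\sum_{j\in K}b_{j}\geq\min_{j}b_{j}$ (and where, incidentally, $\theta u=0$ automatically, so the curvature deficit is only $-C$, not $-2C$); the general $(r,s)$ case is then reduced to the $(n,s)$ case by replacing $dz_{J}$ with $dz_{[n]}$ and compensating in the weight, i.e.\ passing from $\eta$ to $\eta'$ with $e^{-\eta'}=e^{-\eta}\prod_{j\in J,\,j\leq l}|z_{j}|^{-2}(-\log|z_{j}|^{2})^{-1}$, so that the weighted norms match and the new exponents $b_{j}'$ still exceed $C+1$. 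This reduction is the essential step your proposal is missing.

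A secondary issue: your completeness claim is not correct as stated. The punctured-disk factors of $\omega_{\Poin}$ are indeed complete at both the puncture and the outer boundary, but the factors $\Delta^{n-l}$ carry the Euclidean metric, which is incomplete near $|z_{k}|=1$; hence $(\Omega\sta,\omega_{\Poin})$ is not complete when $l<n$. The paper repairs this by noting that $\Omega\sta$ is weakly pseudoconvex, working with the complete metrics $\omega_{\Poin,\eps}=\omega_{\Poin}+i\eps\de\db\psi^{2}$ for a plurisubharmonic exhaustion $\psi$, and passing to the weak limit as in Demailly's argument. Your identification of the twisted Bochner--Kodaira--Nakano identity, the use of Lemma \ref{lemm-VHS-BKN} to isolate the Higgs contribution, and the Mochizuki bound as the source of control are all in line with the paper; the two points above are what need to be added.
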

	{\it Proof.} First, we prove the result for $(n, s)$-forms. A straightforward computation gives
	$$ i \de \db \eta = i \sum_{j=1}^{l} \frac{b_{j}}{|z_{j}|^{2} ( -\log |z_{j}|^{2})^{2}} dz_{j} \wedge d\zbar_{j} + i \sum_{j = l+1}^{n} b_{j} dz_{j} \wedge d\zbar_{j}.$$
	We put $dz\lbind{n} = dz_{1} \wedge \cdots \wedge dz_{n}$. If $u = \sum_{K} u_{K} dz\lbind{n} \wedge d\zbar_{K}$, we have
	$$ [ i \de \db \eta , \Lambda_{\omega_{\Poin}} ] u = \sum_{K} \left( \sum_{j \in K} b_{j}\right) u_{K} dz\lbind{n} \wedge d\zbar_{K} .$$
	Using the Bochner--Kodaira--Nakano identity with a twist by $e^{-\eta}$, we have the a priori inequality
	\begin{align*}
		\norm{\db u}^{2} + \norm{\db\sta u}^{2} & \geq \langle  [i \Theta_{E} , \Lambda_{\omega_{\Poin}}] u, u \rangle + \langle [ i \de \db \eta , \Lambda_{\omega_{\Poin}}] u , u \rangle \\
		& \geq - \langle \theta \theta\sta u, u \rangle + \langle [i \de \db \eta, \Lambda_{\omega_{\Poin}}] u, u \rangle \qquad \because \text{Lemma } \ref{lemm-VHS-BKN} \\
		& \geq - C \norm{u}^{2}+ \langle [i \de \db \eta, \Lambda_{\omega_{\Poin}}] u, u \rangle.
	\end{align*}
	Here, the adjoint operator $\db\sta$ is computed with respect to the metric $h_{E} \cdot e^{-\eta}$ on the Hodge bundles. In particular, if $b_{j} \geq C+ 1$ for each $j$, then we have $\norm{\db u}^{2} + \norm{\db\sta u}^{2} \geq \norm{u}^{2}$. The assertion for $(n, s)$-forms now follows from Lemma \ref{lemm-Hilbert-technique}. 
	One technical part is that $\omega_{\Poin}$ is not a complete metric on the domain $\Omega\sta$. However, we can remedy this situation since $\Omega\sta$ is weakly pseudoconvex. This can be done by taking a plurisubharmonic exhaustion function $\psi \geq 0$ of $\Omega\sta$ and considering the complete metric
	$$ \omega_{\Poin, \eps} = \omega_{\Poin} + i \eps \de \db \psi^{2}. $$
	Then we get the $L^{2}$-existence theorem for $\omega_{\Poin}$ by applying the corresponding result for each $\omega_{\Poin, \eps}$ and taking the weak limit. This is the strategy in \cite{demailly1982estimations}*{Théorème 4.1}.
	
	We next prove the result for $(r, s)$-forms. Let $u = \sum_{J, K} u_{J, K} dz_{J} \wedge d\zbar_{K}$. We put 
	$$ u\lind{J} = \sum_{K} u_{J, K} dz_{J} \wedge d\zbar_{K} \qquad \text{and} \qquad \tilde{u}\lind{J} = \sum_{K} u_{J, K} dz\lbind{n} \wedge d\zbar_{K} .$$
 	Define $\eta'$ by
	$$ e^{-\eta'} = e^{-\eta} \prod_{j \notin J, j \leq l} |z_{j}|^{-2} (-\log |z_{j}|^{2})^{-1} = \prod_{j = 1}^{l} |z_{j}|^{2a_{j}'} (-\log |z_{j}|^{2})^{b_{j}'} \prod_{j = l+1}^{n} e^{- b_{j}' |z_{j}|^{2}}. $$
	Note that $ \norm{u\lind{J}}_{h, \omega_{\Poin}}^{2} e^{-\eta} = \norm{\tilde{u}\lind{J}}_{h, \omega_{\Poin}}^{2} e^{-\eta'} $. We have $\int_{\Omega\sta} \norm{\tilde{u}\lind{J}}_{h, \omega_{\Poin}}^{2} e^{-\eta'} dV_{\Poin} < + \infty$. Since $b_{j}' > C+1$, we can apply the result for $(n, s)$-forms so that there exists an $(n, s-1)$-form $\tilde{v}\lind{J}$ such that $\db \tilde{v}\lind{J} = \tilde{u}\lind{J}$ and
	$$ \int_{\Omega\sta} \norm{\tilde{v}\lind{J}}_{h, \omega_{\Poin}}^{2} e^{-\eta'} dV_{\Poin} \leq \int_{\Omega\sta} \norm{\tilde{u}\lind{J}}_{h, \omega_{\Poin}}^{2} e^{-\eta'} dV_{\Poin}.$$
	Since wedging by holomorphic coordinate does not effect the operator $\db$ (up to sign), there exists an $(r, s-1)$-form $v\lind{J}$ such that $\db v\lind{J} = u\lind{J}$ and
	$$ \int_{\Omega\sta} \norm{v\lind{J}}_{h, \omega_{\Poin}}^{2} e^{-\eta} dV_{\Poin} \leq \int_{\Omega\sta} \norm{u\lind{J}}_{h, \omega_{\Poin}}^{2} e^{-\eta} dV_{\Poin}.$$
	At the end, $\sum_{J} v\lind{J}$ gives the desired solution. \hfill{$\square$}
	
	\subsection{Construction of the Kähler metric on the complement} \label{sec-Kahlermetriconopen}
	From now on, we work in the setting of the main theorem. Hence $D  = \sum_{i=1}^{\nu} D_{i}$ is an SNC divisor on a compact Kähler manifold $\overline{X}$, and $X = \overline{X} \setminus D$. We cover $D$ with finitely many admissible coordinates $\{ \Omega_{i}\}_{i \in I}$ and use Theorem \ref{theo-HiggsBound} to get a real number $C > 0$ such that $|\theta|_{h, \omega_{\Poin}}^{2} < C$ for each admissible coordinate $\Omega_{i}$. Also, $\cL$ is a line bundle on $\overline{X}$ such that $\cL + \sum_{i=1}^{\nu} \alpha_{i} D_{i}$ has a smooth hermitian metric with semi-positive curvature, and at each point $x \in \overline{X}$,  the curvature has at least $n -t$ positive eigenvalues. In addition, $B$ is a nef line bundle on $\overline{X}$. The goal is to construct a Kähler metric $\omega$ on the open locus $X$ so that the following four conditions are satisfied:
	\begin{enumerate} 
		\item $(X, \omega)$ is complete.
		\item The norm of the Higgs field $|\theta|_{h, \omega}^{2}$ is globally bounded.
		\item If we write the curvature of $\cL \otimes B$ as $i \Theta_{\cL \otimes B}$, then the operator $[ i \Theta_{\cL \otimes B}, \Lambda_{\omega} ]$ should be positive definite for $(r, s)$-forms.
		\item We can solve a local $\db$-equation in admissible coordinates with an appropriate twist.
	\end{enumerate}
	The first three conditions will allow us to use the global $L^{2}$-existence result. Also, the last condition is crucial since we want to guarantee the exactness of a Dolbeault type complex. The correct guess based on the second and the fourth condition is that the Kähler metric $\omega$ should somehow look like the Poincaré metric for each admissible coordinate (see Theorem \ref{theo-HiggsBound} and Proposition \ref{prop-L2-local}). However, we cannot get the positivity of the operator $[i \Theta_{\cL \otimes B} , \Lambda_{\omega}]$ by using the metric that is naïvely constructed. The idea is to twist the metric of $\cL$ by a certain amount so that all conditions are true. The construction is essentially identical to \cite{deng2022vanishing}*{\S3.5}, modulo the effect of the twist by $\sum \alpha_{i} D_{i}$.
	
	\begin{prop} \label{prop-constr-Kahler}
		 With the notation as above, there exists a Kähler metric $\omega$ on $X$ and smooth hermitian metrics $h_{\cL}'$ on $\cL|_{X}$ and $h_{B}$ on $B$ satisfying the following:
		\begin{enumerate}
			\item $(X, \omega)$ is complete.
			\item $\omega$ and $\omega_{\Poin}$ are mutually bounded for each admissible coordinate $\Omega_{i}$.
			\item $|\theta|_{h, \omega}^{2}$ is globally bounded.
			\item If $\Theta_{\cL \otimes B}'$ is the curvature associated to the metric $h_{\cL}' \otimes h_{B}$, then we have the inequality $ \langle [i \Theta_{\cL\otimes B}' , \Lambda_{\omega}] u , u \rangle \geq \frac{1}{10} |u|_{\omega}^{2}$ for $(r,s)$-forms on $X$ with $r + s > n + t$.
		\end{enumerate}
	\end{prop}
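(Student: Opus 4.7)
{\it Proof proposal.}
The plan is to construct $\omega, h_\cL', h_B$ following the two references cited in the text: the ``pushing-to-infinity'' technique of \cite{demailly2012analytic}*{Lemma 11.9} for $\omega$, and the semi-positive perturbation of \cite{demailly1997complex}*{Chapter VII.4} for controlling $[i\Theta_{\cL\otimes B}, \Lambda_\omega]$ via Lemma \ref{lemm-Girbau}.

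First I will build $\omega$. Fix a background Kähler metric $\omega_0$ on $\overline{X}$ and smooth hermitian metrics $h_i$ on $\cO_{\overline X}(D_i)$, rescaled so $|s_i|_{h_i}^2 < 1/e$ on $\overline{X}$. Setting $u_i := -\log|s_i|_{h_i}^2 \geq 1$, define
$$\omega = K\omega_0 - \sum_{i=1}^\nu i\de\db\log u_i \qquad \text{on } X,$$
for a constant $K > 0$ to be taken sufficiently large. The computation $-i\de\db\log u_i = i\de u_i \wedge \db u_i/u_i^2 - i\Theta(\cO(D_i),h_i)/u_i$ reveals the first summand to be semi-positive with leading Poincaré singularity $\frac{i\, dz_i\wedge d\bar z_i}{|z_i|^2(-\log|z_i|^2)^2}$ in each admissible chart, and the second to be a bounded smooth form. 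Hence for $K$ sufficiently large, $\omega$ is Kähler on $X$, mutually bounded with $\omega_\Poin$ on each admissible chart (giving (2)), and complete on $X$ since the Poincaré integral diverges (giving (1)). Property (3) follows from Theorem \ref{theo-HiggsBound} combined with the mutual boundedness of $\omega$ and $\omega_\Poin$, together with compactness of $X$ outside a fixed admissible cover of $D$.

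Next I build the metrics on $\cL$ and $B$. Let $h_1$ be a smooth metric on $\cL + \sum\alpha_i D_i$ whose curvature $\Theta_1$ is semi-positive with at least $n-t$ positive eigenvalues everywhere on $\overline{X}$, bounded below by $c_0 > 0$ by compactness. After replacing each $h_i$ by $e^{-\eps\psi_i}h_i$ for small $\eps > 0$ and suitable smooth $\psi_i$ (which leaves the construction of $\omega$ essentially unchanged after reselecting $K$), we may assume the curvature form
$$\Theta_\cL := \Theta_1 - \sum_i \alpha_i\, i\Theta(\cO(D_i), h_i),$$
which is the smooth extension to $\overline X$ of $i\Theta\bigl(\cL, h_1\cdot\prod|s_i|_{h_i}^{-2\alpha_i}\bigr)$, is itself semi-positive with $\geq n-t$ positive eigenvalues bounded below by $c_0/2$. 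Pick $h_B$ with $i\Theta(B,h_B) \geq -\eta\omega_0$ for small $\eta>0$, using that $B$ is nef. The final metric on $\cL|_X$ will be
$$h_\cL' := h_1 \cdot \prod_{i=1}^\nu |s_i|_{h_i}^{-2\alpha_i} \cdot \prod_{i=1}^\nu u_i^c$$
for a small $c>0$; the identity $-\sum_i i\de\db\log u_i = \omega - K\omega_0$ yields
$$i\Theta(\cL\otimes B,\, h_\cL' \otimes h_B) \;=\; \Xi + c\omega,\qquad \Xi \;:=\; \Theta_\cL - cK\omega_0 + i\Theta(B,h_B).$$

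Verifying (4) is the main obstacle. Naively, the eigenvalues of $\Theta_\cL$ with respect to the Poincaré-like $\omega$ collapse near $D$ (since $\omega$ blows up in the Poincaré directions), so a direct application of Lemma \ref{lemm-Girbau} does not deliver a uniform positive lower bound. The $c\omega$ term above is precisely the remedy: using the Kähler identity $[\omega, \Lambda_\omega] = (r+s-n)\,\id$ on $\cA^{r,s}$ together with Lemma \ref{lemm-Girbau} applied to $\Xi$, one obtains for $(r,s)$-forms with $r+s > n+t$
$$\bigl\langle [i\Theta(\cL\otimes B), \Lambda_\omega]\, u, u\bigr\rangle \;=\; c(r+s-n)|u|_\omega^2 + \bigl\langle [\Xi, \Lambda_\omega]\, u, u\bigr\rangle \;\geq\; c(t+1)|u|_\omega^2 + \bigl\langle [\Xi, \Lambda_\omega]\, u, u\bigr\rangle.$$
What remains is a careful eigenvalue bookkeeping: one fixes the parameters $\eta$, $c$, and the $\psi_i$ small enough that $\Xi$ retains its $\geq n-t$ positive eigenvalues with a uniform lower bound on $\overline{X}$, and then takes $K$ large enough (using $\omega \geq K\omega_0$) to suppress the residual Girbau term from $\Xi$ so that it is dominated by the positive floor $c(t+1)|u|_\omega^2$, ultimately producing the required bound $\geq \tfrac{1}{10}|u|_\omega^2$.
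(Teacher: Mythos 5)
Your construction of $\omega$ takes care of properties (1)--(3), but the argument for (4) has a genuine gap that no amount of parameter bookkeeping will close. The decomposition $i\Theta(\cL\otimes B)=\Xi+c\omega$ with $\Xi=\Theta_\cL-cK\omega_0+i\Theta(B,h_B)$ does not produce a usable positive floor. At a point of $X$ away from $D$ your metric satisfies $\omega\approx K\omega_0$, so the terms $c\omega$ and $-cK\omega_0$ nearly cancel: their sum is $c(\omega-K\omega_0)=-c\sum_i i\de\db\log u_i$, a degenerate semi-positive form that is small there. What remains of the curvature is essentially the fixed smooth form $\Theta_\cL+i\Theta(B,h_B)$, all of whose eigenvalues with respect to $\omega\geq K\omega_0$ are $O(1/K)$; hence $[i\Theta(\cL\otimes B),\Lambda_\omega]$ has operator norm $O(n/K)$ at such points and cannot be bounded below by $1/10$ --- taking $K$ larger makes this worse, not better. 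More fundamentally, even setting the $K$ issue aside, Lemma \ref{lemm-Girbau} applied to a merely semi-positive $\Xi$ with eigenvalues $0\leq\gamma_1\leq\cdots\leq\gamma_n$ gives the lower bound $\gamma_1+\cdots+\gamma_s-\gamma_{r+1}-\cdots-\gamma_n$, in which the \emph{largest} eigenvalues enter with a minus sign; for $r<n$ this can be arbitrarily negative, so there is no small ``residual Girbau term'' for the floor $c(t+1)$ to dominate. This is the same reason Akizuki--Nakano vanishing for $\Omega^r\otimes\cL$ with $r<n$ cannot be proved against an arbitrary K\"ahler metric: one must take $\omega$ to be essentially the curvature itself, so that all eigenvalues are close to $1$.

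That is precisely the idea the paper uses and your proposal omits. The paper sets $\xi=\sum_j(a_j\tau_j-b_j\log\tau_j)$ with $a_j$ slightly larger than $\alpha_j$, defines $\omega=\eps\omega_0+\omega_\cL+\omega_B+i\de\db\xi$ with $\eps$ \emph{small} (so the curvature of the $\RR$-twisted bundle is a summand of $\omega$, not dwarfed by a large background term), and twists $\cL$ by the \emph{full} weight $e^{-\xi}$, giving $i\Theta'_{\cL\otimes B}=\omega-\eps\omega_0$. The eigenvalues of $i\Theta'_{\cL\otimes B}$ with respect to $\omega$ are then $\gamma'_\mu/(\gamma'_\mu+\eps)$: all are $\leq 1$, at least $n-t+1$ of them are $\geq(m+\eps_1-\eps)/(m+\eps_1)$ (close to $1$, using the eigenvalue hypothesis), and the remaining at most $t-1$ are $\geq(\eps_1-\eps)/\eps_1$ (a small negative number). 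The Girbau sum is then at least $t\cdot\frac{\eps_1-\eps}{\eps_1}+(s-t)\cdot\frac{m+\eps_1-\eps}{m+\eps_1}-(n-r)$, which the choice of $\eps_1<\eps\ll m$ makes $>1/10$ exactly when $r+s>n+t$. Your twist $\prod_i u_i^{c}$ with $c$ small reproduces only a small multiple of the singular part of $\omega$ and drops both the $a_j\tau_j$ terms and the summand $\omega_\cL+\omega_B$, so the curvature of your twisted metric is never comparable to $\omega$ and property (4) fails.
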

	
		{\it Proof.} First, we fix a Kähler metric $\omega_{0}$ on $\overline{X}$. Choose hermitian metrics on $\cL$ and $\cO_{\overline{X}}(D_{i})$ and denote the corresponding curvatures (multiplied by $i$) by $\omega_{\cL}$, and $\omega_{1},\cdots, \omega_{\nu}$. For each $x \in X$, we can simultaneously diagonalize $\omega_{0}$ and $\omega_{\cL} + \sum_{j=1}^{\nu} \alpha_{j} \omega_{j}$ such that
	$$ \omega_{0}(x) = i \sum_{\mu} \zeta_{\mu} \wedge \bar{\zeta}_{\mu} \qquad \textnormal{and} \qquad \omega_{\cL} + \sum_{j=1}^{\nu} \alpha_{j} \omega_{j} = i \sum_{\mu} \gamma_{\mu}(x) \zeta_{\mu} \wedge \bar{\zeta}_{\mu}. $$
	The condition on the curvature of $\cL + \sum_{j=1}^{\nu} \alpha_{j} D_{j}$ guarantees that
	$$ 0 \leq \gamma_{1}(x) \leq \cdots \leq \gamma_{n}(x)$$
	and $\gamma_{t}(x) > 0$ since there are at least $n-t$ positive eigenvalues. Also, the $\gamma_{i}$ are continuous functions. Let
	$$ m = \min_{x \in \overline{X}} \gamma_{t}(x) > 0.$$
	Choose $0 < \eps_{1} < \eps$ such that
	$$ t \cdot \frac{\eps_{1} - \eps}{\eps_{1}} + (s-t) \cdot \frac{m + (\eps_{1} - \eps)}{m + \eps_{1}} - (n- r) > \frac{1}{10}$$
	for all $r + s > n + t$. This can be done since the above expression is identical to
	$$ (r+ s - n - t) - \frac{\eps_{1}}{m + \eps_{1}} (s-t) - (\eps - \eps_{1}) \left(\frac{t}{\eps_{1}} + \frac{s-t}{m + \eps_{1}}\right). $$
	Let $2\eps_{2} = \eps - \eps_{1} > 0$. Fix $\delta > 0$ such that
	$$ \omega_{0}+ \sum_{j=1}^{\nu} \delta_{j} \omega_{j} > 0 \qquad \text{for all } \delta_{j} \in [-\delta, \delta].$$
	Pick $\alpha_{j} < a_{j} < \alpha_{j} + \frac{\delta \eps_{2}}{2}$ and $b_{j} = C + 3$. Let $\sigma_{j} \in H^{0}(\overline{X}, \cO_{\overline{X}}(D_{j}))$ that vanishes along $D_{j}$. Also, take $a_{j}$ to be sufficiently close to $\alpha_{j}$ so that $\cP_{\alpha}E^{p,q} = \cP_{a}E^{p,q}$. We can rescale $\sigma_{j}$ so that $|\sigma_{j}|_{h_{j}}^{2} \leq \exp (-2(C + 3)/\delta \eps_{2})$. Fix a smooth hermitian metric $h_{B}$ on $B$ so that $i \Theta_{B} + \eps_{2} \omega_{0} > 0$. This can be done since $B$ is a nef line bundle. Denote $\omega_{B} = i \Theta_{B}$. Let
	$$ \tau_{j} \colon X \to \RR, \qquad \tau_{j}(x) = - \log|\sigma_{j}|_{h_{j},x}^{2}$$
	and let
	$$ \xi = \sum_{j=1}^{\nu} a_{j} \tau_{j} - b_{j} \log \tau_{j}.$$
	Define the Kähler metric $\omega$ on $X$ as
	$$\omega = \eps \omega_{0} + \omega_{\cL} + \omega_{B} + i \de \db \xi.$$
	The computation of $\de \db \xi$ gives
	$$ i \de \db \xi = \sum_{j=1}^{\nu} \left(a_{j} - \frac{b_{j}}{\tau_{j}} \right) \omega_{j} + \sum_{j=1}^{\nu} i \frac{b_{j}}{\tau_{j}^{2}} \de \tau_{j} \wedge \db \tau_{j}, $$
	using $i \de \db \tau_{j} = \omega_{j}$. Therefore, we can decompose $\omega$ into five terms
	$$ \omega = \eps_{1} \omega_{0}+ \left(\omega_{\cL} + \sum_{j=1}^{\nu} \alpha_{j} \omega_{j} \right) + \eps_{2} \left(\omega_{0}+ \sum_{j} \delta_{j} \omega_{j}\right) + (\eps_{2} \omega_{0}+ \omega_{B}) + i \sum_{j=1}^{\nu} \frac{b_{j}}{\tau_{j}^{2}} \de \tau_{j} \wedge \db \tau_{j} \qquad \mathbf{(A)}.$$
	Here, we have
	$$ \delta_{j} = \eps_{2}^{-1} \left( a_{j} - \alpha_{j} - \frac{b_{j}}{\tau_{j}} \right) \in [-\delta, \delta] $$
	since $\eps_{2}^{-1} (a_{j} - \alpha_{j}) \in [0, \delta/2]$ and $\tau_{j} \geq 2(C + 3) / \delta \eps_{2}$. Hence, we see that all the five terms are semi-positive and $\eps_{1} \omega_{0} > 0$. Therefore, $\omega$ is a Kähler form on $X$.
	
	Now we prove that the metric $\omega$ and $\omega_{\Poin}$ are mutually bounded for each admissible coordinate $\Omega_{i}$. Clearly, this implies that $(X, \omega)$ is complete. Fix an admissible coordinate $(\Omega; z_{1},\cdots, z_{n})$ and assume that $D \cap \Omega$ is defined by the equation $z_{1}\cdots z_{l} = 0$. For convenience, suppose that $(z_{i}= 0) = D_{i} \cap \Omega$. Note that $\tau_{i} = - \log |z_{i}|^{2} + g_{i}$ for some smooth function $g_{i}$ on $\Omega$. Then 
	$$ \frac{b_{j}}{\tau_{j}^{2}} \de \tau_{j} \wedge \db \tau_{j} = \frac{b_{j}}{ (-\log |z_{j}|^{2} + g)^{2} } \left( \frac{dz_{j}}{z_{j}} - \de g_{j} \right) \wedge \left( \frac{d\zbar_{j}}{\zbar_{j}} - \db g_{j} \right). $$
	Since the first four terms on the right hand side of {\bf (A)} are smooth on the entire space $\overline{X}$, we see that $\omega$ and $\omega_{\Poin}$ are mutually bounded on $\Omega$. This also shows that $|\theta|_{h, \omega}^{2}$ is globally bounded.
	
	The last thing to check is the positivity of the commutator operator $[i \Theta_{\cL \otimes B} , \Lambda_{\omega}]$. Here, the key idea is to twist the metric of $\cL$ by an extra factor of $e^{-\xi}$. This gives us a smooth hermitian metric $h_{\cL}' = h_{\cL}e^{-\xi}$ on $\cL|_{X}$. We denote the corresponding curvature as $\Theta_{\cL \otimes B}'$. The formula for this curvature is
	$$ i \Theta_{\cL \otimes B}' = \omega_{\cL} + \omega_{B} + i \de \db \xi.$$
	Simultaneously diagonalize $\omega_{0}$ and $i \Theta_{\cL \otimes B}'$ at $x\in X$ and express
	$$
		\omega_{0} = i \sum_{\mu} \zeta_{\mu} \wedge \bar{\zeta}_{\mu} \qquad \textnormal{and} \qquad  i \Theta_{\cL \otimes B}' = i \sum_{\mu} \gamma_{\mu}'(x) \zeta_{\mu} \wedge \bar{\zeta}_{\mu}.
	$$
	Also, we assume $\gamma_{1}' \leq \cdots \leq \gamma_{n}'$. Using {\bf (A)} and the fact that $\omega = \eps \omega_{0} + i \Theta_{\cL \otimes B}'$, we see that
	$$ \eps + \gamma_{\mu}'(x) \geq \eps_{1} + \gamma_{\mu}(x).$$
	If we diagonalize $i \Theta_{\cL \otimes B}'$ with respect to $\omega$ and denote the eigenvalues as $\gamma_{\mu, \eps}'(x)$, then we have
	$$ \gamma_{\mu, \eps}' = \frac{\gamma_{\mu}'}{\gamma_{\mu}' + \eps}.$$
	Note that $\gamma_{\mu}'(x) \geq \eps_{1} - \eps + \gamma_{\mu}(x) \geq \eps_{1} - \eps$. Also, if $\mu \geq t$, then we have the improved bound $\gamma_{\mu}'(x) \geq \eps_{1} - \eps + m$. Hence, we have the following bounds for the quantities for $\gamma_{\mu, \eps}'$:
	\begin{align*}
		& 1 \geq \gamma_{\mu, \eps}' \geq \frac{\eps_{1} - \eps}{\eps_{1}} \qquad \text{ for } 1 \leq \mu \leq t - 1 ,\\
		& 1 \geq \gamma_{\mu, \eps}' \geq \frac{m + \eps_{1} - \eps}{m + \eps_{1}} \qquad \text{ for } t \leq \mu \leq n.
	\end{align*}
	Using Lemma \ref{lemm-Girbau} for $(r, s)$-forms $u$, we have
	\begin{align*}
		\langle [i \Theta_{\cL \otimes B}' , \Lambda_{\omega}] u, u \rangle_{x} & \geq (\gamma_{1,\eps}'(x) + \cdots + \gamma_{s, \eps}'(x) - \gamma_{r+1, \eps}'(x) - \cdots - \gamma_{n, \eps}'(x)) | u|_{\omega, x}^{2} \\
		& \geq \Big[ \left(\frac{\eps_{1} - \eps}{\eps_{1}}\right)\cdot t + \left(\frac{m + \eps_{1} - \eps}{m + \eps_{1}}\right) \cdot (s - t) - (n-r) \Big] |u|_{\omega, x}^{2} \\
		& \geq \frac{1}{10} |u|_{\omega, x}^{2}. \qquad \square
	\end{align*}
	
	\begin{rema}
	The construction of the Kähler metric $\omega$ becomes simpler when $\cL + \sum \alpha_{i}D_{i}$ is ample and $B = \cO_{\overline{X}}$. We pick smooth hermitian metrics $h_{\cL}$ and $h_{i}$ on $\cL$ and $\cO_{\overline{X}}(D_{i})$ such that $\omega_{\cL} + \sum \alpha_{i}\omega_{i}$ is a Kähler metric on $\overline{X}$. Then the identical construction of $\xi : X \to \RR$ (with appropriate coefficients $a_{i}$ and $b_{i}$) gives the complete Kähler metric $\omega = \omega_{\cL} + i \de \db \xi$ on $X$ that we wanted. Also, the curvature $i \Theta_{\cL}'$ associated to the metric $h_{\cL}e^{-\xi}$ is exactly $\omega$. Therefore, $[i \Theta_{\cL}', \Lambda_{\omega}]u = (r + s - n)u$ for $(r, s)$-forms $u$. Hence, the positivity of $[i \Theta_{\cL}', \Lambda_{\omega}]$ is obvious.
	\end{rema}
	
	\subsection{$L^{2}$-Dolbeault resolution} \label{sec-L2-Dolbeault}
	The construction of the Kähler metric on $X$ in the previous section allows us to construct a Dolbeault-type resolution of the logarithmic de Rham complex of a variation of Hodge structures defined on the complement of an SNC divisor, extending the resolution described in \S3.1 on $X$. The explicit construction is as follows. Consider the Kähler metric $\omega$ on $X$ and the smooth hermitian metric on $E \otimes \cL \otimes B|_{X}$ given by $h' = (h_{E} \otimes h_{\cL} \otimes h_{B} )\cdot e^{-\xi}$ where $\omega$, $h_{\cL}' = h_{\cL}e^{-\xi}$ and $h_{B}$ are the metrics that we constructed in Proposition \ref{prop-constr-Kahler}. We define $\cH\lind{r}^{2}(E^{p,q} \otimes \cL \otimes B)$ to be the sheaf on $\overline{X}$ whose section on $U \subset \overline{X}$ are the holomorphic sections $u$ of $\Omega_{X}^{r}\otimes E^{p,q} \otimes \cL \otimes B$ on $U \cap X$ such that $\norm{u}_{h', \omega}^{2}$ is locally integrable with respect to the volume form $dV_{\omega}$. Similarly, define $L\lind{r, s}^{2} (E^{p,q} \otimes \cL \otimes B)$ to be the sheaf on $\overline{X}$ whose sections $u$ on $U \subset \overline{X}$ are $(r, s)$-forms with values in $E^{p,q} \otimes \cL \otimes B$ with measurable coefficients on $U \cap X$ such that both $\norm{u}_{h', \omega}^{2} $ and $\norm{\db u}_{h', \omega}^{2}$ are locally integrable with respect to the volume form $dV_{\omega}$. Then we can consider the following complex:
	$$0 \to \cH\lind{r}^{2} (E^{p, q} \otimes \cL\otimes B) \to L_{(r, 0)}^{2} (E^{p,q} \otimes \cL\otimes B) \xrightarrow{\db} L_{(r, 1)}^{2} (E^{p,q} \otimes \cL\otimes B)\xrightarrow{\db} \cdots\xrightarrow{\db} L_{(r, n)}^{2}(E^{p,q}\otimes \cL \otimes B).$$
	A priori, it is not clear that this complex is exact.
	
	\begin{prop} \label{prop-L2-deRham}
		The complex
		$$ 0 \to \cH\lind{r}^{2} (E^{p,q} \otimes \cL \otimes B) \to L\lind{r, \bullet}^{2} (E^{p,q} \otimes \cL \otimes B)$$
		is a resolution of $\cH\lind{r}^{2} (E^{p,q} \otimes \cL \otimes B)$ by fine sheaves.
	\end{prop}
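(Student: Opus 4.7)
The proof is local, and on stalks at points of $X = \overline{X} \setminus D$ the metric $\omega$ and the weight $h'$ are smooth and positive, so exactness reduces to the classical $\bar\partial$-Poincaré lemma, and the identification $\ker \bar\partial \cap L_{(r,0)}^2 = \cH_{(r)}^2$ is elliptic regularity. Fineness is immediate: each $L_{(r,s)}^2(E^{p,q} \otimes \cL \otimes B)$ is a module over $\cC^\infty_{\overline{X}}$, since smooth functions on $\overline{X}$ are bounded on compact sets and therefore preserve the local $L^2$ condition for both the form and its $\bar\partial$-derivative, so partitions of unity exist. The content of the proposition is therefore exactness at a point $x \in D$.

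To treat such an $x$, fix an admissible coordinate $(\Omega; z_1, \ldots, z_n)$ centered at $x$ with $D_i \cap \Omega = (z_i = 0)$ for $1 \leq i \leq l$. After shrinking, trivialize the line bundles $\cL$ and $B$ on $\Omega$; in the trivialization $h_\cL$ and $h_B$ become smooth positive functions bounded above and below on every relatively compact $\Omega' \Subset \Omega$, while $|\sigma_j|_{h_j}^2 = |z_j|^2 e^{-2\psi_j}$ for a smooth $\psi_j$ when $1 \leq j \leq l$, and for $j > l$ the section $\sigma_j$ is non-vanishing on $\Omega$, so $|\sigma_j|_{h_j}^{2a_j}$ and $\tau_j^{b_j}$ are bounded above and below. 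Consequently, on $\Omega' \setminus D$,
\[
e^{-\xi} \;\sim\; \prod_{j=1}^{l} |z_j|^{2a_j} \bigl(-\log |z_j|^2\bigr)^{b_j}
\]
in the sense of mutual boundedness. Combined with the mutual boundedness of $\omega$ and $\omega_{\Poin}$ on $\Omega$ from Proposition \ref{prop-constr-Kahler}, this shows that the local $L^2$ condition on sections of $\Omega_X^r \otimes E^{p,q} \otimes \cL \otimes B$ with respect to $(h', \omega)$ is equivalent, on $\Omega' \setminus D$, to the local $L^2$ condition on $E^{p,q}$-valued $(r,s)$-forms with respect to $(h_E \cdot e^{-\eta}, \omega_{\Poin})$, where $\eta$ is exactly of the form appearing in Proposition \ref{prop-L2-local} (the choice $b_j = C + 3$ made in Proposition \ref{prop-constr-Kahler} satisfies the required $b_j > C + 2$).

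Given a germ $u \in L_{(r,s)}^2(E^{p,q} \otimes \cL \otimes B)$ at $x$ with $\bar\partial u = 0$ and $s \geq 1$, shrink to an $\Omega' \Subset \Omega$ on which $u$ has finite $(h_E \cdot e^{-\eta}, \omega_{\Poin})$-norm. Proposition \ref{prop-L2-local} then produces a measurable $E^{p,q}$-valued $(r, s-1)$-form $v$ on $\Omega' \setminus D$ with $\bar\partial v = u$ and a matching $L^2$-bound; translating back via the equivalence just established gives $v \in L_{(r, s-1)}^2(E^{p,q} \otimes \cL \otimes B)(\Omega')$, which proves exactness at $x$. The main obstacle, and the reason for the careful construction of $\omega$ and $\xi$ in Section \ref{sec-Kahlermetriconopen}, is precisely this weight-matching: the choices $\alpha_j < a_j < \alpha_j + \delta \eps_2 / 2$ and $b_j = C + 3$ were designed so that the global weight $e^{-\xi}$ falls under the hypotheses of the local $L^2$-existence result Proposition \ref{prop-L2-local} on every admissible chart.
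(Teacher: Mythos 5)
Your proof is correct and follows essentially the same route as the paper's: exactness on $X$ and at the $\cH\lind{r}^{2}$ term are classical, and exactness at boundary points is obtained by trivializing $\cL\otimes B$ on an admissible chart, using the mutual boundedness of $\omega$ with $\omega_{\Poin}$ and of $e^{-\xi}$ with the weight $e^{-\eta}$ of Proposition \ref{prop-L2-local}, and then invoking that local $L^{2}$-existence result (your explicit check that $b_{j}=C+3>C+2$ and your remarks on fineness are details the paper leaves implicit).
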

	{\it Proof.} The exactness at $\cH\lind{r}^{2} (E^{p,q} \otimes \cL \otimes B)$ is clear by the regularity of the $\db$-equation. Indeed, if a local section $u$ on $L_{(r,0)}^{2}(E^{p,q}\otimes \cL \otimes B)$ satisfies $\db u = 0$, then $u$ is holomorphic. Also, it is clear that the sequence is exact on the open locus $X$. For the exactness on the boundary, we need to solve a $\db$-equation on a domain of type $\Omega\sta = (\Delta\sta)^{l} \times \Delta^{n-l}$. By construction, we can cover $D$ with admissible coordinates $\Omega \subset \overline{X}$ such that $|\theta|_{h, \omega_{\Poin}}^{2} < C$. Let $D \cap \Omega$ be defined by $z_{1}\cdots z_{l} = 0$. For simplicity, assume $D_{i} \cap \Omega = (z_{i} = 0)$. Also, assume that $\cL \otimes B$ is locally trivial on $\Omega$ and is trivialized by a non-vanishing section $\sigma$. Let $u \otimes \sigma \in L_{(r, s)}^{2} (E^{p,q} \otimes \cL \otimes B)(\Omega)$ such that $\db u = 0$. Let $\eta \colon \Omega\sta \to \RR$ be such that
	$$ e^{-\eta} = \prod_{i=1}^{l} |z_{i}|^{-2a_{i}} (- \log |z_{i}|^{2})^{b_{i}}.$$
	Possibly after shrinking the domain, we can assume that
	$$ \int_{\Omega\sta}\norm{u}_{h_{E}, \omega_{\Poin}}^{2} e^{-\eta} dV_{\omega_{\Poin}} < + \infty,$$
	since $\omega_{\Poin}$ and $\omega$ are mutually bounded on $\Omega\sta$ and $e^{-\eta}$ and $e^{-\xi}$ are mutually bounded on $\Omega\sta$. Hence, there exists $v$ such that $\db v = u$ and
	$$ \int_{\Omega\sta} \norm{v}_{h_{E}, \omega_{\Poin}}^{2} e^{-\eta} dV_{\omega_{\Poin}} < + \infty$$
	by Proposition \ref{prop-L2-local}. This implies that $v \otimes \sigma \in L_{(r, s-1)}^{2}(E^{p,q} \otimes \cL\otimes B)$ and we are done. \hfill{$\square$}
	
	\subsection{Computation of $\cH\lind{r}^{2} (E^{p,q} \otimes \cL\otimes B)$ via Prolongation Bundles}
	We compute the sheaf $\cH\lind{r}^{2} (E^{p,q} \otimes \cL \otimes B)$ explicitly and describe them using prolongation bundles.
	\begin{prop} \label{prop-pronlongation-computation}
		We have the following description of the sheaves $\cH\lind{r}^{2}(E^{p,q} \otimes \cL \otimes B)$ in terms of prolongation bundles:
		$$\cH\lind{r}^{2}(E^{p,q} \otimes \cL \otimes B) = \Omega_{\overline{X}}^{r} (\log D) \otimes \cP_{\alpha} E^{p,q} \otimes \cL \otimes B.$$
	\end{prop}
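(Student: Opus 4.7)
The statement is local on $\overline{X}$, so I fix an admissible coordinate $(\Omega; z_{1},\ldots, z_{n})$ with $D\cap\Omega=(z_{1}\cdots z_{l}=0)$ and $D_{i}\cap\Omega=(z_{i}=0)$, and trivialize $\cL$ and $B$ by non-vanishing local sections. The plan is to express the $L^{2}$-condition in a logarithmic basis of forms and in a local holomorphic frame of $\cP_{\alpha}E^{p,q}$, and then show that $L^{2}$-finiteness is equivalent to holomorphic extendability of the coefficient functions across $D$. By Proposition~\ref{prop-constr-Kahler}(2), $\omega$ and $\omega_{\Poin}$ are mutually bounded on $\Omega\sta$, so volume forms and pointwise norms of forms are interchangeable in the two metrics. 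Moreover, since $h_{\cL}$ and $h_{B}$ are smooth on $\overline{X}$ and $\tau_{j}=-\log|z_{j}|^{2}+g_{j}$ for a smooth $g_{j}$, the weight decomposes as
$$e^{-\xi}=\prod_{j=1}^{l}|z_{j}|^{2a_{j}}\,e^{-a_{j}g_{j}}\bigl(-\log|z_{j}|^{2}+g_{j}\bigr)^{b_{j}},$$
which up to bounded factors is $\prod_{j=1}^{l}|z_{j}|^{2a_{j}}(-\log|z_{j}|^{2})^{b_{j}}$.

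Next I write a holomorphic section of $\Omega_{X}^{r}\otimes E^{p,q}$ on $\Omega\sta$ in the logarithmic basis
$$u=\sum_{A,B}\frac{dz_{A}}{z_{A}}\wedge dz_{B}\otimes s_{A,B},\qquad A\subseteq\{1,\ldots,l\},\ B\subseteq\{l+1,\ldots,n\},\ |A|+|B|=r,$$
with each $s_{A,B}$ holomorphic in $E^{p,q}$. A direct computation with $\omega_{\Poin}$ gives $\bigl|\tfrac{dz_{A}}{z_{A}}\wedge dz_{B}\bigr|^{2}_{\omega_{\Poin}}=\prod_{j\in A}(-\log|z_{j}|^{2})^{2}$; combined with $dV_{\omega}\sim dV_{\Poin}\sim\prod_{j\le l}|z_{j}|^{-2}(-\log|z_{j}|^{2})^{-2}\,dV_{\mathrm{euc}}$, the $L^{2}$-integrand becomes
$$\|u\|^{2}_{h',\omega}\,dV_{\omega}\sim\sum_{A,B}|s_{A,B}|^{2}_{h_{E}}\cdot\prod_{j=1}^{l}|z_{j}|^{2a_{j}-2}\cdot(\text{polylog in }|z_{j}|)\,dV_{\mathrm{euc}}.$$
Using Theorem~\ref{theo-Prolongation} I pick a local holomorphic frame $e_{1},\ldots,e_{N}$ of $\cP_{\alpha}E^{p,q}=\cP_{a}E^{p,q}$ on $\Omega$ and expand $s_{A,B}=\sum_{\mu}g_{A,B}^{\mu}e_{\mu}$ with $g_{A,B}^{\mu}$ holomorphic on $\Omega\sta$. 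Hodge-norm estimates for the Deligne extension give $|e_{\mu}|^{2}_{h_{E}}\lesssim\prod_{j}|z_{j}|^{-2a_{j}+2\eps}$ and a matching lower bound on the Gram matrix $(h_{\mu\nu})$ for every $\eps>0$, so that the integrand is equivalent, up to polylog factors, to $\sum|g_{A,B}^{\mu}|^{2}\prod_{j}|z_{j}|^{2(a_{j}-\alpha_{j})-2-2\eps}\,dV_{\mathrm{euc}}$.

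Since $a_{j}>\alpha_{j}$ by construction, I can choose $\eps<a_{j}-\alpha_{j}$ so that the $|z_{j}|$-exponent is strictly larger than $-2$. A Laurent expansion of each holomorphic $g_{A,B}^{\mu}$ along $z_{j}=0$ then shows that $L^{2}$-finiteness is equivalent to $g_{A,B}^{\mu}$ extending holomorphically across $D$. This identifies $\cH\lind{r}^{2}(E^{p,q}\otimes\cL\otimes B)(\Omega)$ with sections of $\Omega_{\overline{X}}^{r}(\log D)\otimes\cP_{\alpha}E^{p,q}\otimes\cL\otimes B$ over $\Omega$, and the identification glues to the global statement. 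The main obstacle is the two-sided Hodge-norm estimate on the frame $e_{\mu}$: the upper bound is immediate from the definition of $\cP_{\alpha}E^{p,q}$, but the lower bound on the Gram matrix $(h_{\mu\nu})$ needed to translate $L^{2}$-control of $|s_{A,B}|^{2}_{h_{E}}=\sum\bar g_{\mu}g_{\nu}h_{\mu\nu}$ into individual $L^{2}$-control on each coefficient $g_{A,B}^{\mu}$ relies on the sharper asymptotic estimates from the nilpotent orbit theorem (cf.~\cite{mochizuki2002asymptotic}, \cite{deng2022nilpotent}). Once this input is granted, the remainder is a routine $L^{2}$-Laurent extension argument.
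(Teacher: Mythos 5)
Your overall strategy coincides with the paper's: work in an admissible coordinate, use the mutual boundedness of $\omega$ with $\omega_{\Poin}$ and of $e^{-\xi}$ with $\prod_{j}|z_{j}|^{2a_{j}}(-\log|z_{j}|^{2})^{b_{j}}$ to reduce the $L^{2}$-condition to an explicit weighted integrability condition in the logarithmic basis, and then exploit $a_{j}>\alpha_{j}$ together with $\cP_{\alpha}E^{p,q}=\cP_{a}E^{p,q}$. The difference lies in how you establish the inclusion $\cH\lind{r}^{2}\subseteq\Omega_{\overline{X}}^{r}(\log D)\otimes\cP_{\alpha}E^{p,q}\otimes\cL\otimes B$. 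The paper never expands in a frame: it converts the $L^{2}$-bound on a holomorphic coefficient $u_{J}$ into a \emph{pointwise} bound $|u_{J}|_{h_{E}}^{2}\lesssim\prod_{j}|z_{j}|^{-2a_{j}}\prod_{j\in J}|z_{j}|^{-2}$, which places $z_{J}u_{J}$ in $\cP_{a}E^{p,q}$ directly by the growth-order \emph{definition} of the prolongation bundle; the only extra analytic input is the standard $L^{2}$-to-pointwise conversion for holomorphic sections. You instead expand in a holomorphic frame of $\cP_{a}E^{p,q}$ and need a lower bound on the Gram matrix $(h_{\mu\nu})$ as a quadratic form to pass from $L^{2}$-control of $|s_{A,B}|_{h_{E}}$ to $L^{2}$-control of each coefficient $g_{A,B}^{\mu}$. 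This is a strictly heavier input (asymptotic orthogonality of an adapted frame, from the $SL_{2}$/nilpotent orbit norm estimates), and your phrase ``matching lower bound'' is imprecise: the frame elements of $\cP_{\alpha}E^{p,q}$ have different jumping indices in $(\alpha_{j}-1,\alpha_{j}]$, so the lower bound on $|e_{\mu}|^{2}$ does not match the upper bound $\prod_{j}|z_{j}|^{-2\alpha_{j}-2\eps}$; what is true (and suffices, since any exponent $>-2$ kills poles in the Laurent argument) is a one-sided asymptotic-orthogonality estimate $|s|_{h_{E}}^{2}\gtrsim\sum_{\mu}|g^{\mu}|^{2}\,|e_{\mu}|_{h_{E}}^{2}\cdot(\text{polylog})^{-1}$. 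You correctly flag this as the main unproved step and it is available in \cite{mochizuki2002asymptotic} and \cite{deng2022nilpotent}, so your argument can be completed, but the paper's route through the pointwise characterization of $\cP_{a}$ avoids the frame estimates altogether and is the more economical one.
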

	{\it Proof.} These two sheaves agree on the open locus $X$ since they are both extensions of $\Omega_{X}^{r} \otimes E^{p,q} \otimes \cL\otimes B$. Hence, it is enough to examine what happens on the boundary. Fix an admissible coordinate $(\Omega; z_{1},\ldots, z_{n})$. Assume that $D \cap \Omega$ is defined by $z_{1}\cdots z_{l} = 0$ and assume for simplicity that $D_{i} = (z_{i} = 0)$. Let $\sigma$ be a nonvanishing section on $\cL\otimes B$ and let $u = \sum u_{J} \otimes \sigma dz_{J}$ for $u_{J} \in \Gamma(\Omega\sta , E^{p,q})$. Since $\omega$ and $\omega_{\Poin}$ are mutually bounded and $e^{-\xi}$ and $\prod_{j=1}^{l} |z_{j}|^{2a_{j}} (-\log |z_{j}|^{2})^{b_{j}}$ are mutually bounded on $\Omega\sta$, $u \in \cH\lind{r}^{2}(E^{p,q} , \cL \otimes B)$ if and only if
	\begin{equation} \label{eqn-integrability}
		\int_{\Omega\sta} |u_{J}|_{h_{E}}^{2} \prod_{\substack{1 \leq j \leq l \\ j \notin J} } \frac{1}{|z_{j}|^{2} (-\log |z_{j}|^{2})^{2}} \cdot \prod_{j=1}^{l} |z_{j}|^{2a_{j}} ( -\log |z_{j}|^{2})^{b_{j}} d\mu < + \infty 
	\end{equation}
	for all $J$. Here $d\mu$ is the usual Lebesgue measure on $\Omega$. Suppose $$u \in \Gamma(\Omega\sta, \Omega_{\overline{X}}^{r} (\log D) \otimes \cP_{\alpha}E^{p,q} \otimes \cL \otimes B).$$ Then we can express $u$ as
	$$ u = \sum_{J}\left( \prod_{ \substack{1 \leq j \leq l \\ j \in J}} \frac{1}{z_{j}} \right) v_{J} \otimes \sigma dz_{J} $$
	for $v_{J} \in \cP_{\alpha}E^{p,q}$. In particular, $|v_{J}|_{h_{E}} \lesssim \prod_{j=1}^{l} |z_{j}|^{- (\alpha_{j} + a_{j})/ 2} $ since $a_{j} > \alpha_{j}$. Then
	\begin{align*}
		& \int_{\Omega\sta} |v_{J}|_{h_{E}}^{2} \cdot \prod_{\substack{1 \leq j \leq l \\ j \in J}} \frac{1}{|z_{j}|^{2}} \prod_{\substack{1 \leq j \leq l \\ j \notin J} } \frac{1}{|z_{j}|^{2} (-\log |z_{j}|^{2})^{2}} \cdot \prod_{j=1}^{l} |z_{j}|^{2a_{j}} ( -\log |z_{j}|^{2})^{b_{j}} d\mu \\
		& \leq \int_{\Omega\sta} |v_{J}|_{h_{E}}^{2} \prod_{1 \leq j \leq l} |z_{j}|^{2a_{j} - 2}  \cdot \prod_{j=1}^{l} (-\log |z_{j}|^{2})^{C + 3} d\mu\\
		& \lesssim \int_{\Omega\sta} \prod_{j=1}^{l} |z_{j}|^{a_{j} - \alpha_{j} - 2} (-\log |z_{j}|^{2})^{C + 3} d\mu < + \infty
	\end{align*}
	since $a_{j} > \alpha_{j}$. This implies $\Omega_{\overline{X}}^{r}(\log D) \otimes \cP_{\alpha}E^{p,q} \otimes \cL \otimes B\subset \cH\lind{r}^{2}(E^{p,q} \otimes \cL \otimes B)$. Similarly, we can show that $\cH\lind{r}^{2}(E^{p,q}\otimes \cL \otimes B) \subset \Omega_{\overline{X}}^{r} (\log D) \otimes \cP_{a}E^{p,q} \otimes\cL \otimes B$ since if the integral \eqref{eqn-integrability} is finite, then we have $|u_{J}|_{h_{E}}^{2} \lesssim \prod_{j = 1}^{l} |z_{j}|^{-2a_{j}} \cdot \prod_{\substack{1 \leq j \leq l\\ j \in J}} |z_{j}|^{-2}$. However, we chose $a_{j}$ to be sufficiently close to $\alpha_{j}$ so that $\cP_{\alpha}E^{p,q} = \cP_{a}E^{p,q}$. Hence, the equality holds. \hfill{$\square$}
	
	\subsection{Proof of the Theorem \ref{theo-Main}}
	We finally present the proof of the main theorem. First, notice that the global bound for the Higgs field $\theta$ in Theorem \ref{theo-HiggsBound} gives a morphism on sheaves $L\lind{r, s}^{2}(E^{p,q} \otimes \cL \otimes B) \xrightarrow{\theta} L\lind{r+1, s}^{2} (E^{p,q} \otimes \cL\otimes B)$. Therefore, analogously to Section \ref{sec-deRham-Dolbeault}, we can construct a double complex:
	$$ \begin{tikzcd}
		L\lind{0,n}^{2} (E^{p,q} \otimes \cL\otimes B) \ar[r, "\theta"]  & L\lind{1,n}^{2} (E^{p-1,q+1} \otimes \cL\otimes B) \ar[r, "\theta"] & \cdots \ar[r, "\theta"]  & L\lind{n,n}^{2} (E^{p-n,q+n} \otimes \cL\otimes B)\\
		\vdots \ar[r, "\theta"] \ar[u, "\db"] & \vdots \ar[r, "\theta"]\ar[u, "\db"] & \vdots \ar[r, "\theta"] \ar[u, "\db"] & \vdots \ar[u, "\db"] &\\
		L\lind{0,1}^{2} (E^{p,q} \otimes \cL\otimes B) \ar[r, "\theta"] \ar[u, "\db"] & L\lind{1,1}^{2} (E^{p-1,q+1} \otimes \cL\otimes B) \ar[r, "\theta"] \ar[u, "\db"] & \cdots \ar[r, "\theta"] \ar[u, "\db"] & L\lind{1,n}^{2} (E^{p-n,q+n} \otimes \cL\otimes B) \ar[u, "\db"] \\
		L\lind{0,0}^{2} (E^{p,q} \otimes \cL\otimes B) \ar[r, "\theta"] \ar[u, "\db"] & L\lind{1,0}^{2} (E^{p-1,q+1} \otimes \cL\otimes B) \ar[r, "\theta"] \ar[u, "\db"] & \cdots \ar[r, "\theta"] \ar[u, "\db"] & L\lind{n,0}^{2} (E^{p-n,q+n} \otimes \cL\otimes B) \ar[u, "\db"]. 
	\end{tikzcd}$$
	By Proposition \ref{prop-L2-deRham} and \ref{prop-pronlongation-computation}, the $r$-th column is a resolution of $\Omega_{\overline{X}}^{r}(\log D) \otimes E_{\alpha}^{p-r,q+r} \otimes \cL \otimes B$ by fine sheaves. Hence, we can compute the hypercohomology of
	$$ \Big[ E_{\alpha}^{p,q} \to \Omega_{\overline{X}}^{1}(\log D) \otimes E_{\alpha}^{p-1, q+1} \to \cdots \to \Omega_{\overline{X}}^{n}(\log D) \otimes E_{\alpha}^{p-n, q+n} \Big] [n] \otimes \cL \otimes B$$
	by taking the global section of the total complex above and compute the cohomology. Let $(\bb{E}, \eth)$ be the total complex of the double complex above. We have a concrete description of the global section of the sheaves $L\lind{r,s}^{2}(E^{p-r, q+r} \otimes \cL \otimes B)$. The global sections $u$ are $(r, s)$-forms on $X$ with values in $E^{p-r, q+r} \otimes \cL \otimes B$ with measurable coefficients such that
	$$ \int_{X} \norm{u}_{h', \omega}^{2} dV_{\omega} < + \infty \qquad \text{and} \qquad \int_{X} \norm{\db u}_{h' , \omega}^{2} dV_{\omega} < + \infty. $$
	By Proposition \ref{prop-constr-Kahler} and \ref{prop-deRham-Apriori}, we have an a priori inequality
	$$ \norm{\eth \bfu}^{2} + \norm{\eth\sta \bfu}^{2} \geq 0.1 \norm{\bfu}^{2}$$
	for $\bfu \in \bb{E}^{l}$ when $l > 0$. Since $(X,\omega)$ is complete and $|\theta|_{h_{E} , \omega}^{2}$ is globally bounded on $X$, the vanishing of cohomology immediately follows from Proposition \ref{prop-L2-global}. \hfill{$\square$}

	\section{Proof of Saito's vanishing theorem}
	In this section, we prove Saito's vanishing theorem by reducing it to Theorem \ref{theo-Main-ample}. We first give a proof for Saito's Hodge modules, and then explain how the same method works in the setting of complex Hodge modules. For a projective variety $X$, we denote by $\MHM(X)$ the abelian category of graded polarizable mixed Hodge modules on $X$, in the sense of Saito. Roughly speaking, a mixed Hodge module is a filtered $\cD$-module with extra structure. The Saito vanishing theorem concerns the graded piece of the de Rham complex of mixed Hodge modules.
	
	\begin{theo}[\cite{saito1990mixed}] \label{theo-Saito-vanishing-general}
		Let $X$ be a complex projective variety and let $\cM \in \MHM(X)$ be a graded polarizable mixed Hodge module on $X$. For every ample line bundle $\cL$ on $X$, we have
		$$ \HH^{l}\big( X, \gr_{p} \DR_{X}(\cM) \otimes \cL \big) = 0 \qquad \textnormal{for all } l > 0, p \in \ZZ.$$
	\end{theo}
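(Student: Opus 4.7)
The plan is to reduce Theorem \ref{theo-Saito-vanishing-general} to Theorem \ref{theo-Main-ample} via the structure theory of polarizable mixed Hodge modules together with resolution of singularities, using the extra flexibility afforded by the $\RR$-twist $\cL + \sum \alpha_i D_i$. First I would reduce to the pure case: the weight filtration $W_\bullet \cM$ is strict and its graded pieces $\gr^W_\bullet \cM$ are polarizable pure Hodge modules, so by induction on the length of the filtration (using the long exact sequence obtained by applying $\gr_p \DR_X(-) \otimes \cL$ to the associated short exact sequences) it suffices to prove the statement when $\cM$ is pure. Then by Saito's decomposition by strict support, $\cM$ decomposes into a direct sum of pure Hodge modules $\cM_Z$ each with strict support on an irreducible closed subvariety $Z \subset X$, and by Saito's structure theorem $\cM_Z$ is the intermediate extension of a polarizable variation of Hodge structures $V$ defined on a Zariski dense open subset $U \subset Z_{\reg}$.

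Next I would pass to a resolution. Choose a projective birational morphism $\pi \colon \widetilde{X} \to Z$ with $\widetilde{X}$ smooth and with $D = \pi^{-1}(Z \setminus U)$ a reduced SNC divisor; let $f = i \circ \pi \colon \widetilde{X} \to X$ be the composition with the closed embedding $i \colon Z \hookrightarrow X$. The VHS $V$ pulls back to a polarizable VHS $E$ on $\widetilde{X} \setminus D$. The key input from Saito's theory is the following: for $\alpha \in (-1, 0]^{\nu}$ (in particular for $\alpha = -\vec{\eps}$ with $0 < \eps \ll 1$), the graded de Rham complex of $\cM_Z$ is a direct summand of $R f_\ast$ of the graded logarithmic de Rham complex of the Deligne extension $E_\alpha$,
\[
  \gr_p \DR_X(\cM_Z) \text{ is a direct summand of } Rf_\ast\, \gr^p \DR_{(\widetilde{X}, D)}(E_{\alpha}),
\]
which follows from Saito's direct image theorem for projective morphisms between pure Hodge modules, together with the identification of the Hodge filtration on the intermediate extension with the one produced from the Deligne extension in the specified residue range.

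Granting this identification and using the projection formula,
\[
  H^l\bigl(X,\, \gr_p \DR_X(\cM_Z) \otimes \cL\bigr) \text{ is a direct summand of } H^l\bigl(\widetilde{X},\, \gr^p \DR_{(\widetilde{X}, D)}(E_\alpha) \otimes f^\ast \cL\bigr),
\]
so it suffices to arrange that Theorem \ref{theo-Main-ample} applies to the right-hand side. Since $\cL$ is ample on $X$, $i^\ast \cL$ is ample on $Z$, and because $\pi$ is a birational morphism between projective varieties there exists an effective $\pi$-exceptional divisor $F$ with $\pi^\ast i^\ast \cL - \eps F$ ample for all sufficiently small $\eps > 0$; after a further blowup I may assume $\Supp F \subset \Supp D$. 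Writing $-\eps F = \sum_i \alpha_i D_i$ with $\alpha_i \in (-\eps', 0]$, the line bundle $f^\ast \cL + \sum_i \alpha_i D_i$ is ample on $\widetilde{X}$. Applying Theorem \ref{theo-Main-ample} with this choice of $\alpha$ yields the desired vanishing for all $l > 0$, and hence for the original $\cM$.

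The main obstacle is the identification in the second step between $\gr_p \DR_X(\cM_Z)$ and a direct summand of $Rf_\ast \gr^p \DR_{(\widetilde{X}, D)}(E_\alpha)$: this is not proved here but imported from Saito's theory, and it rests on the compatibility of the Hodge filtration with both the intermediate extension functor and the direct image under projective morphisms. Once this is taken as input, the flexibility to choose $\alpha \neq 0$ in Theorem \ref{theo-Main-ample}, which is precisely the improvement over \cite{deng2022vanishing}, is what allows the standard perturbation trick to convert the big-and-nef bundle $\pi^\ast i^\ast \cL$ into an ample one after twisting by an $\RR$-combination of components of $D$.
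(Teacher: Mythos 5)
Your overall skeleton (reduce to pure Hodge modules with strict support, resolve singularities, use the $\alpha$-twist in Theorem \ref{theo-Main-ample} plus the perturbation trick to restore ampleness of $f^{*}\cL$) is the same as the paper's, and the final perturbation step is essentially Lemma \ref{lemm-log-res} and Proposition \ref{prop-vanishing-singulardivisor}. But the step you flag as the "key input" is not just unproved --- as stated it is false, and this is a genuine gap. The decomposition theorem for pure Hodge modules gives that $\gr_{p}\DR_{X}(\cM_{Z})$ is a direct summand of $\bfR f_{*}\,\gr_{p}\DR_{\widetilde{X}}(\widetilde{\cM})$ where $\widetilde{\cM}=j_{!*}E$ is the \emph{intermediate extension} on $\widetilde{X}$; but the graded de Rham complex of $\widetilde{\cM}$ is not the logarithmic de Rham complex of any single Deligne extension $E_{\alpha}$, so Theorem \ref{theo-Main-ample} does not apply to it. What the logarithmic comparison (Proposition \ref{prop-log-comparison}) actually computes is $\gr_{p}\DR(E[\ast D])\simeq\gr_{p}\DR_{(\widetilde{X},D)}(E_{0})$ and $\gr_{p}\DR(E[!D])\simeq\gr_{p}\DR_{(\widetilde{X},D)}(E_{-\eps})$, and $\cM_{Z}$ is only a \emph{quotient} of $\cH^{0}f_{*}(E[!D])$ (equivalently of $\cN[!D_{Z}]$), with kernel supported on the boundary divisor --- not a direct summand. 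A minimal counterexample to your claim: $X=Z=\widetilde{X}=\PP^{1}$, $U=\PP^{1}\setminus\{0\}$, $E$ the trivial VHS, $\cM_{Z}=\QQ^{H}_{\PP^{1}}[1]$; the relevant graded piece of $\DR(\cM_{Z})$ is $\Omega^{1}_{\PP^{1}}=\cO(-2)$, while the corresponding graded piece of the log de Rham complex of $E_{0}$ is $\Omega^{1}_{\PP^{1}}(\log 0)=\cO(-1)$, and $\cO(-2)$ is not a direct summand of $\cO(-1)$.

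Because the passage from $E[!D]$ to the intermediate extension involves a kernel supported in smaller dimension, one cannot avoid an induction on dimension: the paper arranges $D$ to be a hyperplane section of $X$ (so that $\cN[!D]$ is a single mixed Hodge module), uses the exact sequence $0\to\cK\to\cN[!D]\to\cM\to 0$ with $\cK$ supported in $D$, proves the vanishing for $\cN[!D]$ via Proposition \ref{prop-vanishing-singulardivisor} (normalization, log resolution with a relatively ample exceptional divisor, logarithmic comparison, then Theorem \ref{theo-Main}), and handles $\cK$ by Noetherian induction. Your proposal omits this induction entirely, and without it the argument does not close. To repair it, replace the direct-summand claim by the quotient-plus-kernel exact sequence and set up the Noetherian induction; the rest of your argument then goes through as in the paper.
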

	
	We first review some necessary facts on Hodge modules and then prove the Saito vanishing at the end.
	
	\subsection{Terminology and Properties of Hodge Modules}
	
	We give a very brief summary on Hodge modules. All of the results can be found in two foundational papers \cite{saito1988modules,saito1990mixed}, or in a nice survey \cite{schnell2014overview}. Let $X$ be a smooth complex algebraic variety. Saito defines two categories $\HM(X, w)$ and $\MHM(X)$ which are the category of pure Hodge modules with weight $w$ and the category of graded polarizable mixed Hodge modules. The objects in $\HM(X, w)$ are triples $((\cM, F_{\bullet}), K, \alpha)$ where $(\cM, F_{\bullet})$ is a holonomic filtered $\cD$-module, $K$ is a $\QQ$-perverse sheaf on $X$, and an isomorphism $\alpha : \DR_{X}^{\mathrm{an}} \cM \xrightarrow{\simeq} K \otimes_{\QQ} \CC$. For mixed Hodge modules, we have an extra \textit{weight} filtration on $K$, hence on $\cM$. The objects in this category are required to satisfy suitable compatibility conditions. If $\cN$ is a variation of Hodge structures on $X$ of weight $w$ and we switch from decreasing filtration to increasing filtration by the rule $F_{k}\cN = F^{-k}\cN$, then $\cN$ underlies a structure of a pure Hodge module of weight $w + \dim X$. As opposed to the previous sections, we switch to increasing filtrations for Hodge modules from this point in order to be consistent with the conventions on $\cD$-modules. We introduce some terminology and properties of $\cD$-modules and Hodge modules, without giving any proofs. In this article, a $\cD$-module is understood to mean a left $\cD$-module.
	\begin{enumerate} 
		\item For a singular variety $X$, we embed $X$ in a smooth variety $Z$ and define $\MHM(X)$ as the category of mixed Hodge modules on $Z$ whose support is in $X$. The category $\MHM(X)$ does not depend on the choice of closed embedding $X \hookrightarrow Z$. We can similarly define $\HM(X, w)$ as well. We can still define these categories even if $X$ cannot be embedded into a smooth variety by covering with embeddable open subsets and imposing some conditions on the intersections. However, we ignore this issue since we will work only on (quasi)-projective varieties.
		
		\item The filtration on the de Rham complex of a mixed Hodge module $\cM$ on a smooth variety $X$ is given by
		$$ F_{k} \DR_{X}(\cM) = \left[F_{k}\cM \to \Omega_{X}^{1} \otimes F_{k+1}\cM \to \cdots \to \Omega_{X}^{n} \otimes F_{k+n}\cM\right] [n].$$
		After taking the graded pieces, we obtain a complex of $\cO_{X}$-modules
		$$ \gr_{k}\DR_{X}(\cM) = \left[ \gr_{k} \cM \to \Omega_{X}^{1} \otimes \gr_{k+1} \cM \to \cdots \to\Omega_{X}^{n} \otimes \gr_{k+n} \cM\right][n]. $$
		We can extend the notion of graded de Rham complexes to objects in the derived category of mixed Hodge modules. In other words, for $u \in \cD^{b}(\MHM(X))$, the graded de Rham complex $\gr_{k} \DR_{X}(u)$ makes sense as an object in $\cD_{\coh}^{b}(\cO_{X})$.
		
		\item  \cite{schnell2016saito}*{Lemma 7.3}. For a singular variety $X$, we define the graded de Rham complex of $\cM \in \MHM(X)$ by embedding $X$ in a smooth variety $Z$ and taking the graded de Rham complex in $Z$. It turns out that the complex $\gr_{k}\DR_{X}(\cM)$ is actually a complex of $\cO_{X}$-modules, and does not depend on the choice of the embedding $X \hookrightarrow Z$ as an object in $\cD_{\coh}^{b}(\cO_{X})$.
		
		\item For holonomic $\cD$-modules there are functors $f\lsta, f_{!}, \DD_{X}$ defined at the level of derived category of holonomic $\cD$-modules. The same functors admit enrichment to functors between the corresponding derived categories of mixed Hodge modules.
		
		\item We have a canonical transformation $\gamma_{f} : f_{!} \to f\lsta$ and this transformation is an isomorphism if $f$ is proper.
		
		\item If $j : U \to X$ is a locally closed immersion between smooth varieties and $\cM \in \MHM(U)$, then the transformation $j_{!}\cM \to j\lsta \cM$ induces a map $\cH^{0}(j_{!} \cM) \to \cH^{0}(j\lsta \cM)$. The image of this map is the intermediate extension of $\cM$ which is denoted by $j_{!\ast} \cM$.
		
		\item Furthermore, let's suppose that $U \subset X$ is open and $D = X \setminus U$ is a divisor. Then $j\lsta$ and $j_{!}$ are induced by an exact functor at the level of abelian categories. In other words, if $\cM \in \MHM(U)$ is a mixed Hodge module on $U$, then we can view $\cM[\ast D] := j\lsta \cM$ and $\cM[!D]:= j_{!} \cM$ as objects inside $\MHM(X)$. Moreover, the underlying $\cO_{X}$-module structure of $\cM[\ast D]$ agrees with the usual sheaf theoretic push-forward.
		
		\item For proper morphisms $\mu \colon Y \to X$, taking the graded de Rham complex commutes with taking the pushforward. In other words, if $\cM$ is a mixed Hodge module on $Y$, we have
		$$ \gr_{p}\DR_{X} (\mu\lsta \cM) \simeq \bfR \mu\lsta( \gr_{p}\DR_{Y} \cM). $$
	\end{enumerate}
	
	We next describe the structure of the categories $\MHM(X)$ and $\HM(X, w)$.
	
	\begin{enumerate}	
		\item For a mixed Hodge module $\cM \in \MHM(X)$, the graded piece $\gr_{w}^{W}(\cM) \in \HM(X, w)$ is a pure Hodge module of weight $w$.
		
		\item We say that a pure Hodge module $\cM \in\HM(X, w)$ has strict support $Z$ if the support of every nonzero submodule and quotient module is equal to $Z$. We denote by $\HM_{Z}(X, w)$ the subcategory of $\HM(X, w)$ consisting of objects with strict support $Z$.
		
		\item Every pure Hodge module $\cM \in \HM(X, w)$ admits a decomposition by strict support. Hence $\cM$ is of the form
		$$ \cM = \bigoplus_{Z \subset X} \cM_{Z}$$
		where the sum runs over all irreducible closed subsets $Z \subset X$ and $\cM_{Z}$ is a pure Hodge module with strict support $Z$.
		
		\item Every object $\cM \in \HM_{X}(X, w)$ is a generically defined variation of Hodge structures of weight $w - \dim X$. Conversely, every variation of Hodge structures $\cN$ of weight $w - \dim X$ defined on a Zariski open subset of the smooth locus of $X$ extends uniquely to an object in $\HM_{X}(X, w)$ via the intermediate extension.
	\end{enumerate}
	
	\subsection{Logarithmic comparison}
	Let $X$ be a smooth variety and let $D$ be an SNC divisor on $X$. Let $U$ be the complement $X \setminus D$ and denote the open inclusion as $j \colon U \to X$. We consider a variation of Hodge structures $E$ on $U$. The main theorem involves the logarithmic de Rham complex of $E$. We now describe the relation between the logarithmic de Rham complex $\DR_{(X, D)}(E_{\alpha})$ and the de Rham complex of the $\cD_{X}$-modules $E[\ast D]$ and $E[!D]$. The results can be found in \cite{saito1990mixed}*{3.b} or \cite{wu2017multi}. We define a subring of $\cD_{X}$ by
	$$ V_{0}^{D} \cD_{X} = \{ P \in \cD_{X} : P \cdot (f)^{j} \subset (f)^{j} \text{ for all } j \}$$
	where $D$ is given by $f = 0$. If $(z_{1},\ldots, z_{n})$ is a system of coordinates such that $D$ is given by the equation $z_{1}\cdots z_{l} = 0$, then we can describe $V_{0}^{D} \cD_{X}$ locally as
	$$V_{0}^{D}\cD_{X} = \cO_{X} \langle z_{1} \de_{1},\ldots, z_{l} \de_{l}, \de_{l+1} , \ldots  , \de_{n} \rangle. $$
	 Giving a $V_{0}^{D} \cD_{X}$-module structure on $\cN$ is equivalent to giving a logarithmic connection
	$$ \nabla \colon \cN \to \Omega_{X}^{1} (\log D) \otimes \cN $$
	such that $\nabla^{2} = 0$. We can define the logarithmic de Rham complex of a $V_{0}^{D} \cD_{X}$-module as follows:
	$$ \DR_{(X, D)} \cN = \left[\cN \to \Omega_{X}^{1} (\log D) \otimes \cN \to \ldots \to \Omega_{X}^{n} (\log D) \otimes \cN\right][n].$$
	We have a filtration $F_{\bullet}$ on $V_{0}^{D} \cD_{X}$ which is induced by the order filtration on $\cD_{X}$. For a variation of Hodge structures $E$ on $U$, the Deligne extension $E_{\alpha}$ has a structure of filtered $V_{0}^{D} \cD_{X}$-module by the logarithmic connection and the Hodge filtration which is constructed via the nilpotent orbit theorem. The extensions $E[\ast D]$ and $E[!D] $ are given by
	$$ E[!D] = \cD_{X} \otimes_{V_{0}^{D} \cD_{X}} E_{1 - \eps}  \qquad \textnormal{and} \qquad  E[\ast D] = \cD_{X} \otimes_{V_{0}^{D} \cD_{X}} E_{1} \simeq \cD_{X}\cdot E_{1}. $$
	Here, we chose $\eps > 0$ to be small enough so that the eigenvalues of the residues of $E_{1-\eps}$ lie inside the interval $(-1, 0]$. The inclusion $E_{1 - \eps} \to E_{1}$ induces a morphism $E[!D] \to E[\ast D]$ and this agrees with the canonical one. The image of the natural map $ E[!D] \to E[\ast D]$ is exactly $\cD_{X} \cdot E_{1 -\eps}$, which is the intermediate extension $j_{!\ast} E$. The filtrations on $E[!D]$ and $E[\ast D]$ are defined by the filtrations on $\cD_{X}$ and $E_{\alpha}$. To be precise, we have
	\begin{align*}
		& F_{p} E[\ast D] = \sum_{i} F_{p-i} \cD_{X} \otimes F_{i} E_{1}  \\
		& F_{p} E [!D] = \sum_{i} F_{p - i} \cD_{X}\otimes F_{i} E_{ 1 - \eps} .
	\end{align*}
	Also, we can define filtered $\cD$-modules $\cD_{X}\otimes_{V_{0}^{D} \cD_{X}} E_{\alpha}$ for each $\alpha \in \RR^{l}$. The obvious map
	$$ E_{\alpha} \to \cD_{X} \otimes_{V_{0}^{D}\cD_{X}} E_{\alpha+ 1} $$
	obtained by the inclusion $E_{\alpha} \hookrightarrow E_{\alpha + 1}$ composed with $m \mapsto 1 \otimes m$ gives a morphism at the level of de Rham complexes.
	$$ \DR_{(X, D)} E_{\alpha} \to \DR_{X} \left(\cD_{X} \otimes_{V_{0}^{D}\cD_{X}} E_{\alpha+ 1}\right).$$
	The upshot is that we can compute the de Rham complex on the right hand side using the logarithmic de Rham complex.
	
	\begin{theo}[\cite{saito1990mixed}*{3.11}, \cite{wu2017multi}*{Theorem 5.3.8}]
		For each $\alpha \in \RR^{l}$, the morphism 
		$$ \DR_{(X, D)} E_{\alpha} \to \DR_{X}\left(\cD_{X} \otimes_{V_{0}^{D}\cD_{X}} E_{\alpha + 1}  \right)$$
		is a filtered quasi-isomorphism.
	\end{theo}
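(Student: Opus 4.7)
The statement is local, so I work in admissible coordinates where $D$ is cut out by $z_1 \cdots z_l = 0$, and abbreviate $V_0 := V_0^D \cD_X$ and $M := \cD_X \otimes_{V_0} \cM_{\alpha+1}$. The plan is to pass through a PBW-style decomposition and a Koszul-type argument on the resulting filtered double complex, with careful tracking of the Hodge filtration throughout.

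My first step is to use the PBW decomposition of $\cD_X$ as a right $V_0$-module on the basis $\{\partial_1^{k_1}\cdots\partial_l^{k_l}\}_{\mathbf{k} \in \ZZ_{\geq 0}^l}$, which holds because $V_0$ already contains $z_i\partial_i$ for $i \leq l$ and $\partial_j$ for $j > l$. This gives an $\cO_X$-module decomposition $M = \bigoplus_{\mathbf{k}} \partial^{\mathbf{k}} \otimes \cM_{\alpha+1}$ (twisted by the commutation relations when computing the $\cO_X$-action) and allows the de Rham complex $\DR_X(M)$ to be rewritten as a filtered complex with filtration by total degree $|\mathbf{k}|$. The Hodge filtration on $M$ is the convolution $F_p M = \sum_i F_{p-i} \cD_X \otimes F_i \cM_{\alpha+1}$, which is compatible with this auxiliary filtration.

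Next I analyze the spectral sequence of this filtration. On the associated graded, the differentials of $\DR_X(M)$ become $\cO_X$-linear, involving only the logarithmic connection on $\cM_{\alpha+1}$ and the residues along the components $D_i$. The eigenvalue condition on the residues of $\cM_{\alpha+1}$ (lying in $[-\alpha_i - 1, -\alpha_i)$), combined with the shifts produced by the PBW commutation relations, provides a Koszul-type collapse: the operators acting on the Koszul directions are of the form $(z_i\partial_i - c_{\mathbf{k},i})$ with $c_{\mathbf{k},i}$ never equal to a residue eigenvalue of $\cM_\alpha$, so they are invertible, and the spectral sequence degenerates. The limit is identified with the log de Rham complex of $\cM_\alpha$: the factors of $1/z_i$ arising from inverting $(z_i\partial_i - c)$ are absorbed into logarithmic forms $dz_i/z_i$, producing exactly the shift of the Deligne extension from $\alpha+1$ back to $\alpha$, and matching the comparison map built into the statement.

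The main obstacle is the verification of filtered strictness, that is, ensuring that the degeneration above is strict with respect to the Hodge filtration, so as to yield a filtered (not merely an underived) quasi-isomorphism. Here, the nilpotent orbit theorem from Section \ref{sec-prolongation} is essential: it guarantees that the Hodge filtration on $\cM_{\alpha+1}$ restricts correctly to each residue eigenspace, that the residues act semisimply modulo their nilpotent parts, and that the PBW decomposition respects the filtration in the requisite sense. Once strictness is established, the local identification globalizes by the naturality of the construction, yielding the theorem on arbitrary $X$.
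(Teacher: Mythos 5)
First, a point of comparison: the paper does not prove this statement at all — it is imported verbatim from \cite{saito1990mixed}*{3.11} and \cite{wu2017multi}*{Theorem 5.3.8} — so there is no internal proof to measure yours against, and I can only assess your argument on its own terms.

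There is a genuine gap at the very first step. $\cD_X$ is \emph{not} free as a right $V_0^D\cD_X$-module on the monomials $\partial^{\mathbf{k}}=\partial_1^{k_1}\cdots\partial_l^{k_l}$: these elements generate, but with relations, since for instance $\partial_i z_i = z_i\partial_i+1$ exhibits a nonzero element of $\partial_i\cdot V_0^D\cD_X\,\cap\,V_0^D\cD_X$. Consequently $M=\cD_X\otimes_{V_0^D\cD_X}\cM_{\alpha+1}$ is not the direct sum $\bigoplus_{\mathbf{k}}\partial^{\mathbf{k}}\otimes\cM_{\alpha+1}$ even as a sheaf of $\CC$-vector spaces — concretely, $\partial_i\otimes z_im' = 1\otimes(z_i\partial_i+1)m'$ identifies a nonzero piece of the $\mathbf{k}=e_i$ ``summand'' with a piece of the $\mathbf{k}=0$ one — so the grading by $|\mathbf{k}|$ and the spectral sequence you build on it do not exist as described. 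What does exist is the exhaustive filtration $G_kM=\sum_{|\mathbf{j}|\le k}\partial^{\mathbf{j}}\cdot(1\otimes\cM_{\alpha+1})$, and the real content of the theorem is (i) the identification of its subquotients with sums of $\partial^{\mathbf{j}}\otimes\bigl(\cM_{\alpha+1}/\sum_{i:\,j_i>0}z_i\cM_{\alpha+1}\bigr)$ and (ii) the bijectivity of the induced maps in each $\partial_i$-direction; this is precisely where the location of the residue eigenvalues of $\cM_{\alpha+1}$ in $[-\alpha_i-1,-\alpha_i)$ enters, through the invertibility of $z_i\partial_i+k$ on $\cM_{\alpha+1}/z_i\cM_{\alpha+1}$ for the relevant integers $k$. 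Your sentence about constants $c_{\mathbf{k},i}$ ``never equal to a residue eigenvalue of $\cM_\alpha$'' gestures at this but is not a checkable assertion: which module the operators act on and which constants actually occur are exactly the points at issue, and they are not pinned down by your setup. Finally, the filtered strictness — the other half of the statement — is only asserted; the nilpotent orbit theorem does not by itself yield strictness of $F_\bullet\DR_{(X,D)}\cM_\alpha\to F_\bullet\DR_X(M)$, and in the cited sources this requires its own induction on the number of branches of $D$. In short, the overall flavor (filter, identify graded pieces, use the eigenvalue window, then check strictness) is the right one, but the decomposition on which your argument rests is false, and the two places where the hypotheses are actually used are left as assertions.
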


	If we apply this theorem for $\alpha = 0$ and $\alpha = (-\eps, \ldots, -\eps) \in \RR^{l}$ and take the graded pieces, we get the following isomorphisms:
	
	\begin{prop} \label{prop-log-comparison}
		With the above notation, we get an isomorphism between the graded pieces of the de Rham complexes:
		\begin{align*}
			& \gr_{p} \DR_{(X, D)} E_{0} \simeq_{qis} \gr_{p} \DR_{X} (E[\ast D]) \\
			& \gr_{p} \DR_{(X, D)} E_{-\eps} \simeq_{qis} \gr_{p} \DR_{X} (E[!D]).
		\end{align*}
	\end{prop}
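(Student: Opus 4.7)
The plan is to deduce the proposition directly from the cited theorem by specializing $\alpha$ to two particular values and then passing to graded pieces. Set $\alpha = 0 = (0,\ldots,0) \in \RR^l$ and $\alpha = -\vec{\eps} = (-\eps,\ldots,-\eps)$ for $0 < \eps \ll 1$. Then $\alpha + 1 = (1,\ldots,1)$ and $\alpha+1 = (1-\eps,\ldots,1-\eps)$, respectively. By the definitions recalled immediately above the theorem, we have identifications of filtered $\cD_X$-modules
\begin{align*}
\cD_X \otimes_{V_0^D \cD_X} \cM_1 &\simeq \cM[\ast D], \\
\cD_X \otimes_{V_0^D \cD_X} \cM_{1-\eps} &\simeq \cM[!D],
\end{align*}
where the filtrations on the right are those inherited from the tensor product. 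Plugging these in, the theorem yields filtered quasi-isomorphisms
\[
\DR_{(X,D)} \cM_0 \xrightarrow{\ \simeq\ } \DR_X(\cM[\ast D]), \qquad \DR_{(X,D)} \cM_{-\eps} \xrightarrow{\ \simeq\ } \DR_X(\cM[!D]).
\]

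To obtain the proposition, I would then take $\gr_p$ of both sides. In general, applying $\gr_p$ to a filtered morphism of filtered complexes yields a morphism of graded complexes, and if the original map is a filtered quasi-isomorphism (so that the induced maps between the associated graded are already quasi-isomorphisms term by term in the appropriate sense), the resulting map is a quasi-isomorphism in $\cD^b_{\coh}(\cO_X)$. This is where one uses that the Hodge filtrations $F_\bullet \cM_\alpha$ (and the induced filtrations on the tensor products with $\cD_X$) are strict and locally bounded, which is guaranteed by the Hodge-module-theoretic construction of these filtrations via the nilpotent orbit theorem.

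The only point requiring some care, rather than a genuine obstacle, is matching the indexing convention: the earlier part of the paper uses the decreasing filtration $F^\bullet$ and writes $\gr^p \DR_{(\overline{X},D)}(E_\alpha)$, while here we are using the increasing $F_\bullet$ convention compatible with $\cD$-modules, with the translation $F_k = F^{-k}$. Under this translation the term $\Omega^r_X(\log D) \otimes \gr_{p+r}\cM_\alpha$ in $\gr_p \DR_{(X,D)}\cM_\alpha$ corresponds to $\Omega^r_X(\log D) \otimes E_\alpha^{-p-r,\,k+p+r}$, so the two descriptions agree. Granting this bookkeeping, the two displayed quasi-isomorphisms follow immediately by comparing the $p$-th graded piece of each side of the two filtered quasi-isomorphisms above.
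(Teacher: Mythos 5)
Your proposal is correct and follows exactly the paper's route: the paper likewise obtains the proposition by specializing the cited filtered quasi-isomorphism to $\alpha = 0$ and $\alpha = (-\eps,\ldots,-\eps)$, invoking the identifications $\cM[\ast D] = \cD_X \otimes_{V_0^D\cD_X}\cM_1$ and $\cM[!D] = \cD_X \otimes_{V_0^D\cD_X}\cM_{1-\eps}$, and passing to graded pieces. Your extra remarks on strictness and the $F_k = F^{-k}$ reindexing are harmless elaborations (indeed, a filtered quasi-isomorphism by definition induces quasi-isomorphisms on all graded pieces), and nothing essential is missing.
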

	
	This allows us to compute the de Rham complexes of $E[\ast D]$ and $E[!D]$ in terms of the logarithmic de Rham complexes.
	
	\begin{rema}
		Both \cite{saito1990mixed} and \cite{wu2017multi} assume quasi-unipotence of the monodromy to apply the nilpotent orbit theorem. However, this is no longer an issue since we have the nilpotent orbit theorem for arbitrary complex polarized Hodge structures by \cite{sabbah2022degenerating} and \cite{deng2022nilpotent}. Then the same proof goes through with a slight change that the Deligne extensions $E_{\alpha}$ might no longer be $\QQ$-indexed.
	\end{rema}

	\subsection{Proof of Saito's vanishing theorem} \label{sec:proof-Saito-Q}
	We first reduce the general case to that of pure Hodge modules with strict support. Every mixed Hodge module $\cM$ has a weight filtration $W_{\bullet} \cM$ such that $\gr_{w}^{W} \cM$ is a pure Hodge module of weight $w$. Using the exactness of the functor $\gr_{p} \DR_{X}$, we can easily deduce the Saito vanishing theorem from vanishing theorems for pure Hodge modules. Furthermore, since any pure Hodge module admits a decomposition by strict support, the assertion in Theorem \ref{theo-Saito-vanishing-general} is a consequence of the following.
	
	\begin{theo} \label{theo-Saito-vanishing-purestrict}
		Let $X$ be a reduced and irreducible projective variety and let $\cM \in \HM_{X}(X, w)$ be a polarizable Hodge module with strict support $X$. For an ample line bundle $\cL$ on $X$, we have
		$$ \HH^{l}(X, \gr_{p} \DR_{X}(\cM) \otimes \cL ) = 0 \qquad \textnormal{for all } l > 0, p \in \ZZ.$$
	\end{theo}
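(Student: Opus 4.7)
The plan is to prove Theorem~\ref{theo-Saito-vanishing-purestrict} by induction on $n = \dim X$, running simultaneously with the general Theorem~\ref{theo-Saito-vanishing-general} in lower dimensions. The base case $n = 0$ is trivial, and the reduction of Theorem~\ref{theo-Saito-vanishing-general} to the pure-strict-support case is standard via the weight filtration and the strict-support decomposition, with summands supported on proper subvarieties falling under the inductive hypothesis. So fix $\cM \in \HM_X(X, w)$ with strict support $X$. Choose a Zariski-dense open $U_0$ in the smooth locus of $X$ on which $\cM$ corresponds to a polarized variation of Hodge structures $\cN_0$, and a log resolution $\mu\colon Y \to X$ that is an isomorphism over $U_0$ with $D = \mu^{-1}(X \setminus U_0)_{\mathrm{red}} = \sum D_i$ a simple normal crossing divisor on $Y$. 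Let $\cN = \mu^{\ast}\cN_0$ on $Y \setminus D$ and let $\cN^Y_{!\ast}$ be its intermediate extension to a pure Hodge module on $Y$.

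The decomposition theorem for polarizable pure Hodge modules gives $\mu_+ \cN^Y_{!\ast} \cong \cM \oplus \cM'$ with $\cM'$ supported in $X \setminus U_0$. Using that $\gr_p \DR_X$ commutes with proper pushforward together with the projection formula, the asserted vanishing reduces to $\HH^l(Y, \gr_p \DR_Y(\cN^Y_{!\ast}) \otimes \mu^{\ast}\cL) = 0$ for $l > 0$. The two short exact sequences of mixed Hodge modules on $Y$,
\[ 0 \to K \to \cN[!D] \to \cN^Y_{!\ast} \to 0, \qquad 0 \to \cN^Y_{!\ast} \to \cN[\ast D] \to Q \to 0, \]
have kernels $K, Q$ supported on $D$. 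Pushing $K$ and $Q$ forward by $\mu$ and applying the inductive hypothesis via the standard spectral sequence to the cohomology Hodge modules of $\mu_+ K$ and $\mu_+ Q$ (supported on $\mu(D) \subset X \setminus U_0$, of dimension $< n$), together with the long exact sequences in hypercohomology, further reduces the problem to
\[ \HH^l\bigl(Y, \gr_p \DR_Y(\cN[\ast D]) \otimes \mu^{\ast}\cL\bigr) = 0, \qquad \HH^l\bigl(Y, \gr_p \DR_Y(\cN[!D]) \otimes \mu^{\ast}\cL\bigr) = 0 \]
for $l > 0$.

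By Proposition~\ref{prop-log-comparison}, these complexes are identified (after switching between increasing and decreasing filtrations) with $\gr^p \DR_{(Y, D)}(\cN_0)$ and $\gr^p \DR_{(Y, D)}(\cN_{-\vec\eps})$, placing us in the setting of Theorem~\ref{theo-Main-ample}. Since $\mu^{\ast} \cL$ is only nef and big, split $D = \sum E_j + \sum D_k^{\mathrm{st}}$ into $\mu$-exceptional components and strict transforms; by Kodaira's lemma, there exist small $\delta_j > 0$ (and, for the second vanishing, a small $\eps > 0$) so that $\mu^{\ast}\cL - \sum \delta_j E_j$ and $\mu^{\ast}\cL - \sum \delta_j E_j - \eps \sum D_k^{\mathrm{st}}$ are ample. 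The monodromy of $\cN$ around every exceptional $E_j$ is trivial, since $\cN_0$ extends across $\mu(E_j)$ inside $U_0$, so Theorem~\ref{theo-Prolongation}(4) implies that replacing $\alpha_j = 0$ by $\alpha_j = -\delta_j$ on exceptional components does not change the Deligne extension. Choosing $\alpha = (-\delta_j, 0)$ respectively $(-\delta_j, -\eps)$ therefore makes $\cN_\alpha$ equal to $\cN_0$ respectively $\cN_{-\vec\eps}$ while making $\mu^{\ast}\cL + \sum \alpha_i D_i$ ample, and Theorem~\ref{theo-Main-ample} yields the required vanishings.

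The main obstacle is the combinatorial bookkeeping of the Deligne extension across exceptional versus non-exceptional components of $D$, together with verifying that monodromy-triviality around the exceptional divisor permits the Kodaira-trick perturbation without altering the relevant prolongation bundle. A secondary technical point is the careful treatment of $\mu_+ K$ and $\mu_+ Q$ as objects in the derived category of mixed Hodge modules on $X$, so that the inductive hypothesis propagates termwise through the cohomology spectral sequence of $\gr_p \DR_X$.
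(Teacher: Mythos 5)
Your overall architecture (simultaneous Noetherian induction with Theorem \ref{theo-Saito-vanishing-general}, reduction to the log setting on a resolution, logarithmic comparison, then Theorem \ref{theo-Main-ample} after a perturbation that does not move the prolongation bundle) matches the paper's, but two steps in the middle fail. The first is the treatment of $K$ and $Q$. On $Y$ these are single mixed Hodge modules supported on $D$, but $\mu^{\ast}\cL$ is not ample there, so you push forward; then $\mu_{+}K$ is a complex whose cohomology Hodge modules $\cH^{j}(\mu_{+}K)$ live in positive degrees $j$ as well (whenever $\mu$ contracts positive-dimensional fibers inside $D$, which is the generic situation). The inductive hypothesis kills $E_{2}^{i,j}=\HH^{i}(X,\gr_{p}\DR_{X}(\cH^{j}\mu_{+}K)\otimes\cL)$ only for $i>0$, so the terms with $i\leq 0$, $j\geq l$ survive and the spectral sequence does not give $\HH^{l}(X,\gr_{p}\DR_{X}(\mu_{+}K)\otimes\cL)=0$ for $l>0$. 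In fact the intermediate claim $\HH^{l}(Y,\gr_{p}\DR_{Y}(\cN^{Y}_{!\ast})\otimes\mu^{\ast}\cL)=0$ for all $l>0$ and all $p$ is generally false: by your own decomposition $\mu_{+}\cN^{Y}_{!\ast}\cong\cM\oplus\cM'$ this group contains $\HH^{l}(X,\gr_{p}\DR_{X}(\cM')\otimes\cL)$ as a direct summand, and $\cM'$ is a sum of shifted Hodge modules whose shifts produce nonzero contributions in positive degrees. The paper sidesteps this entirely with a hyperplane trick: it first enlarges $X\setminus U$ to a hyperplane section $H\sim D+E$ of $X$ itself, so that $U$ is affine, $\cN[!D]$ is a single object of $\MHM(X)$, and the kernel $\cK$ of $\cN[!D]\to\cM$ is a single mixed Hodge module supported on $D$ where $\cL$ restricts to an ample bundle; Noetherian induction then applies with no pushforward spectral sequence, and the resolution $\mu$ is only ever used to compute $\gr_{p}\DR$ of the $!$- and $\ast$-extensions (Proposition \ref{prop-vanishing-singulardivisor}), never of an intermediate extension.

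The second gap is the perturbation. The monodromy of $\cN$ around an exceptional component $E_{j}$ is the monodromy of $\cN_{0}$ around a loop in $U_{0}$ encircling $\mu(E_{j})\subset X\setminus U_{0}$; since $\cN_{0}$ does not extend across $\mu(E_{j})$, there is no reason for it to be trivial. Worse, triviality would hurt you: $0$ is a jumping value of $\cP_{\alpha}$ in the $j$-th coordinate exactly when the local monodromy has eigenvalue $1$ (e.g.\ for unipotent monodromy), so setting $\alpha_{j}=-\delta_{j}<0$ strictly shrinks $\cP_{0}$ (already for the trivial variation, $\cP_{-\delta}\cO=\cO(-E_{j})\neq\cO=\cP_{0}\cO$); Theorem \ref{theo-Prolongation}(4) only gives invariance under small \emph{positive} perturbations. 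On the other hand, with all $\alpha_{i}\geq 0$ you cannot make $\mu^{\ast}\cL+\sum\alpha_{i}D_{i}$ ample, since curves contracted by $\mu$ have degree $0$ on $\mu^{\ast}\cL$ and typically negative degree on exceptional divisors. This is precisely why the paper first arranges $X\setminus U$ to be an effective Cartier divisor $D$ on $X$ and chooses the resolution as in Lemma \ref{lemm-log-res}: then $\mu^{\ast}(cD)$ is an effective divisor in which \emph{every} component of $F$ appears with positive multiplicity, and adding a much smaller $\mu$-ample combination $d\sum c_{i}F_{i}$ yields $\mu^{\ast}\cL+\sum a_{i}F_{i}$ ample with all $a_{i}>0$ (for the $\ast$-extension) or all $a_{i}\in(-\eps,0)$ (for the $!$-extension), which is compatible with the jumping behavior of $\cP_{\alpha}$. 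Without the Cartier hypothesis on $D\subset X$, your Kodaira-lemma perturbation cannot be arranged to have the right signs.
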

	We present a key step which relies on our main theorem. Namely, we can get vanishing for Hodge modules obtained by `MHM-theoretic' push-forward and push-forward with compact support across a possibly singular divisor. Before that, we prove a short lemma about constructing nice resolution of singularities. The argument is essentially due to \cite{kollar2021resolution}*{Lemma 8}.

    \begin{lemm} \label{lemm-log-res}
        Let $X$ be a normal variety and $D$ be an effective Cartier divisor on $X$ such that $X \setminus D$ is smooth. Then there exists a log resolution $\mu : Y \to X$ of $(X, D)$ such that
        \begin{enumerate}
            \item $\mu : Y \to X$ is an isomorphism over $X \setminus D$, and
            \item There exists a divisor $E$ on $Y$ such that $E$ is $\mu$-ample and $\mu(\supp E) \subset D$.
        \end{enumerate}
    \end{lemm}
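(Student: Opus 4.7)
The plan is to realize $\mu$ as an iterated blow-up with all centers lying over $D$, and then to assemble the tautological $\pi_i$-ample line bundles of the individual blow-ups into a single $\mu$-ample divisor. Applying Hironaka's theorem to the pair $(X, D)$, I obtain a log resolution $\mu : Y \to X$ of the form $\mu = \pi_1 \circ \pi_2 \circ \cdots \circ \pi_k$, where $\pi_i : X_i \to X_{i-1}$ is the blow-up of a smooth center $Z_{i-1} \subset X_{i-1}$, with $X_0 = X$ and $Y = X_k$. The hypothesis that $X \setminus D$ is smooth guarantees that the resolution algorithm may be run so that each $Z_{i-1}$ lies over $D$: outside $\mu^{-1}(D)$ there is no singularity to resolve and no boundary to separate into normal crossings, so no blow-ups are needed there. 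This immediately yields condition (1).

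For condition (2), let $F_i \subset X_i$ be the exceptional divisor of $\pi_i$. By the universal property of the blow-up, $\mathcal{O}_{X_i}(-F_i)$ is $\pi_i$-ample. Let $\sigma_i : Y \to X_i$ denote the canonical factorization of $\mu$, so each pullback $\sigma_i^* F_i$ is a Cartier divisor on $Y$ with $\supp (\sigma_i^* F_i) \subset \mu^{-1}(D)$. The construction of $E$ then uses the standard relative-ampleness principle: if $f$ is projective with an $f$-ample line bundle $A$ and $g$ is projective with a $g$-ample line bundle $B$, then $A \otimes f^* B^{\otimes n}$ is $(g \circ f)$-ample for every $n \gg 0$. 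Iterating this, starting from $-F_k$ on $X_k = Y$, I would choose positive integers $a_1, \ldots, a_{k-1}$ large enough that
\[
E \;=\; -F_k - a_{k-1}\, \sigma_{k-1}^* F_{k-1} - \cdots - a_1\, \sigma_1^* F_1
\]
is $\mu$-ample on $Y$. Since $\supp E \subset \mu^{-1}(D)$ by construction, we get $\mu(\supp E) \subset D$.

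No step constitutes a serious obstacle. The one point that needs care is the first: arranging Hironaka's algorithm so that it does not modify the smooth locus $X \setminus D$. This is a standard functoriality property of modern resolution algorithms, and it is precisely where the hypothesis that $X \setminus D$ is smooth (equivalently, that $D$ contains the singular locus of $X$) enters. The subsequent relative-ampleness calculation is then a routine induction on the number of blow-ups, using only that relative ampleness is preserved under adding a sufficiently positive pullback.
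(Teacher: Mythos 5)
Your proof is correct, but it takes a genuinely different route from the paper's. You rely on the strong form of Hironaka's theorem: the log resolution is realized as a tower of blow-ups along smooth centers, each lying over $D$ (which is legitimate, since the pair $(X,D)$ is already log smooth on $X\setminus D$ and functorial resolution only modifies the non-log-smooth locus), and you then descend relative ampleness through the tower by combining the tautological $\pi_i$-ample divisors $-F_i$ via the standard fact that an $f$-ample plus a large multiple of the pullback of a $g$-ample divisor is $(g\circ f)$-ample. The paper instead follows Kollár's trick, which is engineered to work with an \emph{arbitrary} log resolution: take any log resolution $\pi_1$ with a $\pi_1$-ample divisor $H_1$, push $H_1$ down to a Weil divisor on the normal variety $X$, form $X_2=\SProj$ of the associated sheaf algebra so that $\cO_{X_2}(1)$ is $\pi_2$-ample, dominate both models by a third log resolution, and observe that $m\tau_1\sta H_1+m\tau_2\sta H_2+F$ is simultaneously relatively ample and trivial over $U$, hence linearly equivalent to a divisor supported over $D$. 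Your argument is shorter and more transparent, at the cost of invoking the finer ``composite of smooth blow-ups with all centers over $D$'' structure of the resolution (and, implicitly, that each exceptional divisor $F_i$ is Cartier with $\cO_{X_i}(-F_i)$ relatively very ample, which holds for any blow-up of an ideal sheaf); the paper's version is insensitive to which log resolution one starts with, which is precisely what Kollár's lemma is designed to achieve. Both produce an $E$ with $\supp E\subset\mu^{-1}(D)$, which is all that is used in Proposition \ref{prop-vanishing-singulardivisor}.
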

    \begin{proof}
        Pick a log resolution $\pi_{1} : X_{1} \to X$ which is an isomorphism over $U= X \setminus D$. Pick a $\pi_{1}$-ample divisor $H_{1}$ on $X_{1}$ and let $\cL = \cO_{X}(-\pi_{1 \ast}H_{1})$. Let
        $$\pi_{2} : X_{2} = \SProj \left( \bigoplus_{m \geq 0} \cL^{\otimes m} \right) \to X$$
        and denote $H_{2} = \cO_{X_{2}}(1)$. It is clear that $\pi_{2}$ is an isomorphism over $U$ since $\pi_{1\ast} H_{1}$ is Cartier on $U$. Moreover, $H_{2}$ is $\pi_{2}$-ample. We choose a log resolution $\pi_{3} : X_{3} \to X$ which is an isomorphism over $U$ that dominates $X_{2}$ and $X_{1}$. Hence, we have the following diagram
        $$ \begin{tikzcd}
            & X_{3} \ar[rd, "\tau_{1}"] \ar[ld, "\tau_{2}"'] \ar[dd, "\pi_{3}"] & \\ X_{2} \ar[rd, "\pi_{2}"'] & & X_{1} \ar[ld, "\pi_{1}"] \\
            & X & 
        \end{tikzcd}$$
        Since $X_{1}$ is smooth, there is a $\tau_{1}$-exceptional divisor $F$ which is $\tau_{1}$-ample. For $m \gg 0$, the divisor $m\tau_{1}\sta H_{1} + F$ is $\pi_{3}$-ample. Since $H_{2}$ is $\pi_{2}$-ample, $\tau_{2}\sta H_{2}$ is $\pi_{3}$-nef. Therefore
        $$ m \tau_{1}\sta H_{1} + m \tau_{2}\sta H_{2} + F$$
        is $\pi_{3}$-ample. However, $\cO_{X_{3}}(m\tau_{1}\sta H_{1} + m \tau_{2}\sta H_{2} + F)|_{U} \simeq \cO_{U}$ by construction. This means that there is a rational section of $m\tau_{1}\sta H + m\tau_{2}\sta H_{2} + F $ whose divisor $E$ is supported in $X_{3} \setminus U$. The morphism $\pi_{3} : X_{3} \to X$ and $E$ is what we wanted.
    \end{proof}

	\begin{prop} \label{prop-vanishing-singulardivisor}
		Let $X$ be a projective variety and $D$ an effective Cartier divisor on $X$. Suppose that $U = X \setminus D$ is smooth and consider the open inclusion $j \colon U \to X$. Let $\cM$ be a variation of Hodge structures on $U$ that we identify with the corresponding pure Hodge module. If $\cM[\ast D] := j\lsta \cM$ and $\cM[!D] := j_{!} \cM$, then the following vanishings hold:
		\begin{align*}
			&\HH^{l}(X, \gr_{p}\DR_{X}(\cM[\ast D]) \otimes \cL) = 0 \\
			&\HH^{l}(X, \gr_{p}\DR_{X}(\cM[! D]) \otimes \cL) = 0 
		\end{align*}
		for all $l > 0$, $p \in \ZZ$ and every ample line bundle $\cL$ on $X$.
	\end{prop}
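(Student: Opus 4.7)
The plan is to reduce the proposition to Theorem~\ref{theo-Main-ample} via a log resolution. Applying Lemma~\ref{lemm-log-res}, pick $\mu \colon Y \to X$ that is an isomorphism over $U$, with $\tilde{D} = \mu\upmo(D)_{\mathrm{red}} = \sum_{i} \tilde{D}_{i}$ an SNC divisor on $Y$, and carrying an effective $\mu$-ample divisor $E = \sum_{i} b_{i} \tilde{D}_{i}$ supported on $\tilde{D}$. Under the identification $Y \setminus \tilde{D} \simeq U$, the VHS $\cM$ extends naturally to $Y \setminus \tilde{D}$; I still write $\cM$ for the extension. Since $\mu$ is proper and equals the identity on $U$, the Hodge-module pushforwards are $\mu\lsta(\cM[\ast\tilde{D}]) = \cM[\ast D]$ and $\mu\lsta(\cM[!\tilde{D}]) = \cM[!D]$. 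Because $\gr_{p}\DR$ commutes with proper pushforward and by the projection formula,
\[
 \bfR\Gamma\bigl(X, \gr_{p}\DR_{X}(\cM[?D]) \otimes \cL\bigr) \;\simeq\; \bfR\Gamma\bigl(Y, \gr_{p}\DR_{Y}(\cM[?\tilde{D}]) \otimes \mu\sta\cL\bigr)
\]
for $? \in \{\ast, !\}$, and Proposition~\ref{prop-log-comparison} identifies the inner complex with $\gr_{p}\DR_{(Y,\tilde{D})}(\cM_{0})$ in the $\ast$-case and with $\gr_{p}\DR_{(Y,\tilde{D})}(\cM_{-\vec{\eps}})$ in the $!$-case (for small $\eps > 0$).

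It now suffices to produce, on $(Y, \tilde{D})$, a parameter $\alpha \in \RR^{\nu}$ satisfying (i) $\cM_{\alpha} = \cM_{0}$ (respectively $\cM_{\alpha} = \cM_{-\vec{\eps}}$ for appropriate small $\eps$), and (ii) $\mu\sta\cL + \sum_{i}\alpha_{i}\tilde{D}_{i}$ is ample on $Y$. Property~(4) of Theorem~\ref{theo-Prolongation} guarantees that $\cM_{\alpha}$ is locally constant in $\alpha$ away from a finite collection of jump hyperplanes, so (i) reduces to choosing $\alpha$ in an explicit open neighborhood of $\vec{0}$ or of $-\vec{\eps}$. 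Since $E$ is $\mu$-ample and $\cL$ is ample on $X$, $\mu\sta\cL + \tau E$ is ample on $Y$ for every sufficiently small $\tau > 0$. For the $[\ast D]$ case I simply take $\alpha_{i} = \tau b_{i} \geq 0$ with $\tau$ so small that $\alpha$ lies in the constancy neighborhood of $\vec{0}$; then both (i) and (ii) hold and Theorem~\ref{theo-Main-ample} yields the vanishing.

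The main obstacle is the $[!D]$ case, because the natural perturbation by $\tau E$ pushes $\alpha$ into the positive orthant, whereas I need $\alpha$ close to $-\vec{\eps}$. The remedy is a two-parameter perturbation. Fix $\tau > 0$ small so that $\mu\sta\cL + \tau E$ is ample, then pick $\delta > 0$ with $\tau \max_{i} b_{i} < \delta$, also small enough that $(\mu\sta\cL + \tau E) - \delta \sum_{i} \tilde{D}_{i}$ remains ample, and set $\alpha_{i} := -\delta + \tau b_{i} \in (-\delta, 0]$. For $\delta$ taken smaller than every positive log-eigenvalue of the residue of $\cM$ along the $\tilde{D}_{i}$, the interval $(-\delta, 0)$ lies in a single constancy region of the Deligne extension, so $\cM_{\alpha} = \cM_{-\vec{\eps}}$ for a suitable small $\eps > 0$. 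Meanwhile
\[
 \mu\sta\cL + \sum_{i}\alpha_{i}\tilde{D}_{i} = (\mu\sta\cL + \tau E) - \delta\sum_{i}\tilde{D}_{i}
\]
is ample by construction, and Theorem~\ref{theo-Main-ample} applied to this setup gives the desired vanishing.
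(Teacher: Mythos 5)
Your proposal follows the same skeleton as the paper's proof (log resolution via Lemma \ref{lemm-log-res}, compatibility of $\gr_{p}\DR$ with proper pushforward, logarithmic comparison, perturbation of $\mu\sta\cL$ by components of the SNC divisor, then Theorem \ref{theo-Main-ample}), but the perturbation step for the $[\ast D]$-case has a genuine gap. Lemma \ref{lemm-log-res} does \emph{not} provide an \emph{effective} $\mu$-ample divisor $E=\sum_{i}b_{i}\tilde{D}_{i}$; it only produces some divisor with $\mu(\supp E)\subset D$, and in general $E$ cannot be effective: by the negativity lemma, a $\mu$-ample divisor supported on the $\mu$-exceptional locus is forced to be anti-effective there, so the $b_{i}$ on exceptional components are negative whenever $\mu$ is not an isomorphism. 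Your $[\ast D]$-argument sets $\alpha_{i}=\tau b_{i}$ and needs $\alpha_{i}\geq 0$; with negative $b_{i}$ this fails, and the fallback you invoke --- that $\cM_{\alpha}=\cM_{0}$ for $\alpha$ in an \emph{open} neighborhood of $\vec{0}$ --- is false. Theorem \ref{theo-Prolongation}(4) gives only the one-sided constancy $\cP_{\alpha+\vec{\eps}}=\cP_{\alpha}$ for $0<\eps\ll 1$; the prolongation genuinely jumps as $\alpha$ decreases through $0$ whenever a residue has eigenvalue $0$ (already for the trivial variation of Hodge structures one has $\cP_{-\delta}\cO_{X}=\cO_{X}(-\tilde{D}_{i})\subsetneq\cP_{0}\cO_{X}$ along $\tilde{D}_{i}$). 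The paper's remedy is to bring $\mu\sta D=\sum_{i}m_{i}\tilde{D}_{i}$ (all $m_{i}>0$) into the perturbation: $\mu\sta(\cL+cD)+dE$ is ample for $0<d\ll c\ll 1$ and equals $\mu\sta\cL+\sum_{i}(cm_{i}+db_{i})\tilde{D}_{i}$ with all coefficients positive and arbitrarily small, which is exactly what the $[\ast D]$-case requires. (Your two-parameter trick in the $[!D]$-case is essentially the analogous computation with $\mu\sta(\cL-cD)+dE$, and it survives the sign issue because there one only needs all $\alpha_{i}$ to land in a small interval $(-\delta',0)$ below the first jump, which is achievable for $b_{i}$ of either sign.)

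A second, smaller omission: Lemma \ref{lemm-log-res} is stated for \emph{normal} $X$ (its proof pushes forward divisors and forms $\SProj$ of a divisorial algebra), so before invoking it you must first pass to the normalization $\tau\colon X^{\mathrm{nor}}\to X$, which is finite, hence proper, and an isomorphism over $U$; the paper carries out this reduction using the compatibility of $\gr_{p}\DR$ with $\tau\lsta$ and the projection formula.
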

	
	{\it Proof.} 
        First, we reduce to the case when $X$ is normal. Consider the normalization $\tau : X^{\mathrm{nor}} \to X$ which is clearly an isomorphism over $U$ and let $F = (\tau\sta D)_{\mathrm{red}}$. Therefore, we have inclusions $i : U \to X^{\mathrm{nor}}$ and $j : U \to X$. Since $\tau$ is finite and hence proper, we have the following isomorphisms:
        \begin{align*}
		& \gr_{p} \DR_{X}(\cM[\ast D]) \simeq  \tau\lsta \left( \gr_{p} \DR_{X^{\mathrm{nor}}}(\cM[\ast F]) \right)  \\
		& \gr_{p} \DR_{X}(\cM[!D]) \simeq  \tau\lsta \left( \gr_{p} \DR_{X^{\mathrm{nor}}} (\cM[!F]) \right) .
	\end{align*}
        The isomorphism follows by the commutativity of the graded de Rham complex with proper pushforward. By the projection formula, it is enough to show the two following vanishings:
        \begin{align*}
            & \HH^{l}(X^{\mathrm{nor}}, \gr_{p} \DR_{X^{\mathrm{nor}}}(\cM[\ast F])\otimes \tau\sta\cL) = 0 \qquad \text{for all } l > 0, p \in \ZZ, \\
            & \HH^{l}(X^{\mathrm{nor}}, \gr_{p} \DR_{X^{\mathrm{nor}}}(\cM[! F])\otimes \tau\sta\cL) = 0 \qquad \text{for all } l > 0, p \in \ZZ.
        \end{align*}
        Since $\tau\sta \cL$ is still ample, it is enough to show the assertion when $X$ is normal.
 
	Now, we assume that $X$ is normal. Consider a log resolution $\mu \colon Y \to X$ of the pair $(X, D)$ which is an isomorphism over $U$. Let $F = (\mu\sta D)_{\mathrm{red}}$, which is an SNC divisor on $Y$. Write $F = \sum_{i=1}^{s} F_{i}$. By Lemma \ref{lemm-log-res}, we can assume that there exists $c_{i}$ such that $\sum_{i=1}^{s} c_{i} F_{i}$ is $\mu$-ample. Since $\mu$ is proper, we can compute the graded de Rham complex of $\cM[\ast D]$ and $\cM[!D]$ as follows:
	\begin{align*}
		& \gr_{p} \DR_{X}(\cM[\ast D]) \simeq \bfR \mu\lsta \left( \gr_{p} \DR_{Y}(\cM[\ast F]) \right) \simeq \bfR \mu\lsta \left( \gr_{p} \DR_{(Y, F)}(\cM_{0}) \right) \\
		& \gr_{p} \DR_{X}(\cM[!D]) \simeq \bfR \mu\lsta \left( \gr_{p} \DR_{Y} (\cM[!F]) \right) \simeq \bfR \mu\lsta \left( \gr_{p} \DR_{(Y, F)} (\cM_{-\eps})\right).
	\end{align*}
	The isomorphism on the right hand side follows by logarithmic comparison (Proposition \ref{prop-log-comparison}). By the projection formula, it is enough to show the two following vanishings:
	\begin{align*}
		& \HH^{l}(Y, \gr_{p} \DR_{(Y, F)}(\cM_{0}) \otimes \mu\sta \cL) = 0 \qquad \text{for all } l > 0, p \in \ZZ, \\		
		& \HH^{l}(Y, \gr_{p} \DR_{(Y, F)}(\cM_{-\eps}) \otimes \mu\sta \cL) = 0 \qquad \text{for all } l > 0, p\in \ZZ.
	\end{align*}
 	The only problem is that $\mu\sta \cL$ is no longer ample. However, we can handle this by perturbing $\mu\sta \cL$. Note that for $0 < d \ll c \ll 1$, we have $\mu\sta (\cL + cD) + d \cdot \sum_{i=1}^{s} c_{i}F_{i}$ ample. Hence, there exist $0 < a_{i} \ll 1$ such that $\mu\sta \cL + \sum_{i=1}^{s} a_{i} F_{i}$ is ample since the images of $F_{i}$ are contained in $D$. In fact, we can choose $a_{i}$ to be sufficiently small so that $\cP_{0}\cM^{p,q} = \cP_{a}\cM^{p,q}$ as a sheaf on $Y$. Then we can use Theorem \ref{theo-Main-ample} to get
	$$ \HH^{l}(Y, \gr_{p} \DR_{(Y, F)}(\cM_{0}) \otimes \mu\sta \cL) = \HH^{l}(Y, \gr_{p} \DR_{(Y, F)}(\cM_{a}) \otimes \mu\sta \cL) = 0 \qquad \text{for all } l >0. $$
	The second vanishing follows along the same lines. There exist $0 < d \ll c \ll 1$ such that $\mu\sta (\cL - cD) + d\cdot \sum_{i=1}^{s} c_{i} F_{i}$ is ample. Hence, there exist $0< a_{i} \ll 1$ such that $\mu\sta \cL- \sum_{i=1}^{s} a_{i} F_{i}$ is ample. We can make these $a_{i}$ be sufficiently small so that $0 < a_{i} < \eps$, which implies $\cP_{-\eps} \cM^{p,q} = \cP_{-a} \cM^{p,q}$ as a sheaf on $Y$. Therefore,
	$$ \HH^{l}(Y, \gr_{p} \DR_{(Y, F)}(\cM_{ -\eps}) \otimes \mu\sta \cL) = \HH^{l}(Y, \gr_{p} \DR_{(Y, F)}(\cM_{ -a}) \otimes \mu\sta \cL) = 0 \qquad \text{for all } l >0. $$
	This concludes the proof. \hfill{$\square$}
	
	We finally give the proof of the Saito vanishing theorem.
	
	{\it Proof of Theorem \ref{theo-Saito-vanishing-general}.} The strategy is to prove Theorems \ref{theo-Saito-vanishing-general} and \ref{theo-Saito-vanishing-purestrict} simultaneously by Noetherian induction. As we have already seen, Theorem \ref{theo-Saito-vanishing-purestrict} in dimension $\leq n$ implies Theorem \ref{theo-Saito-vanishing-general} in dimension $\leq n$. Let $X$ be an irreducible complex projective variety and let $\cM$ be a pure Hodge module with strict support $X$. Then there exists an open subset $U \subset X$ such that $\cN = \cM|_{U}$ is a variation of Hodge structures on $U$. By shrinking $U$, we may assume that $D = X \setminus U$ is a divisor. Fix a very ample divisor $H$ such that $|H - D| \neq \emptyset$ and write $H \sim D + E$. By replacing $U$ by $X \setminus (D \cup E)$ and $D$ by $D \cup E$, we may assume that $D$ is a hyperplane section of $X$ for some embedding $X \hookrightarrow \PP^{N}$. Considering every $\cD$-module as a $\cD_{\PP^{N}}$-module, we see that $\cN[!D]$ is an object in $\MHM(X)$ because $\PP^{N} \setminus H \to \PP^{N}$ is an affine morphism. Since $\cM$ is the intermediate extension of $\cN$, we have an exact sequence
	$$ 0 \to \cK \to \cN[!D] \to \cM \to 0$$
	and the kernel $\cK$ is supported inside $D$. By Proposition \ref{prop-vanishing-singulardivisor} and Noetherian induction, we have
	\begin{align*}
		& \HH^{l}(X, \gr_{p} \DR_{X}(\cN[!D]) \otimes \cL) = 0 \qquad \text{for all } l > 0, p \in \ZZ,\\
		& \HH^{l}(X, \gr_{p} \DR_{X}(\cK) \otimes \cL) = 0 \qquad \text{for all } l > 0, p \in \ZZ.
	\end{align*}
	The long exact sequence and the exactness of $\gr_{p} \DR_{X}$ gives
	$$ \HH^{l}(X, \gr_{p} \DR_{X}(\cM) \otimes \cL) = 0 \qquad \text{for all } l > 0, p \in \ZZ.$$
	This proves the general version of Saito's vanishing theorem. \hfill{$\square$}

    \subsection{Vanishing theorem for complex Hodge modules} We finally explain how to modify the previous proof of the Saito vanishing theorem in the case of \textit{complex} Hodge modules. Before that, we briefly highlight the key differences between Saito's Hodge modules and the Hodge modules of Sabbah--Schnell. In the theory of Sabbah--Schnell, they remove the underlying perverse sheaf $K$ and the isomorphism $\alpha \colon \DR_{X}^{\mathrm{an}}\cM \simeq K \otimes_{\QQ} \CC$ from the picture by considering a sesquilinear pairing on $\cD$-modules. More precisely, instead of considering a underlying perverse sheaf, they consider a \textit{triple} $(\cM, \cM', \mathfrak{s})$ where $\cM$ and $\cM'$ are filtered holonomic $\cD$-modules, and
    $$ \mathfrak{s} \colon \cM \otimes \overline{\cM'} \to \mathfrak{Db}_{X}$$
    is a $\cD_{X} \otimes \cD_{\overline{X}}$-linear perfect pairing (which gives the data of the polarization). Here, $\mathfrak{Db}_{X}$ is the sheaf of complex (0,0)-currents on $X$, hence carries a natural left action of $\cD_{X} \otimes \cD_{\overline{X}}$.

    In \cite{SSMHMproject}, for a complex manifold $X$, the authors construct the category $\mathrm{pHM}_{\CC}(X, w)$\footnote{The subscript $\CC$ is not written in the MHM project, but we write it in order to distinguish it with Saito's Hodge modules.} of complex pure polarizable Hodge modules on $X$ with weight $w$. At the time of the writing, \cite{SSMHMproject} does not contain the definition of complex \textit{mixed} Hodge modules. We use \cite{davis2022mixed}*{\S7.1} for the construction of the category $\MHM_{\CC}(X)$ of complex mixed Hodge modules, which follows the same strategy of \cite{saito1990mixed}*{\S4}. They define the category $\MHM_{\CC}(X)$ as the full-subcategory of triples which are stable under certain operations. Then, the same six functor formalism works for complex mixed Hodge modules. We point out that even if we want the vanishing theorem only for pure complex Hodge modules, the strategy in \S\ref{sec:proof-Saito-Q} needs the formalism of complex mixed Hodge modules since for a pure Hodge module $\cN$ on a complement of a divisor $D$ of $X$, we use that the push-forwards $\cN[\ast D]$ and $\cN[!D]$ are mixed Hodge modules. The analogue of this fact for complex mixed Hodge modules is in \cite{davis2022mixed}*{\S7.1}. The strategy in \S\ref{sec:proof-Saito-Q} can be applied to the setting of complex mixed Hodge modules since
    \begin{enumerate}
        \item one can reduce to the case of pure Hodge module with strict supports,
        \item the structure theorem for pure Hodge module (see \cite{SSMHMproject}*{\S16}), and
        \item Theorem \ref{theo-Main-ample} uses complex polarizable variation of Hodge structures.
    \end{enumerate}

	%\addcontentsline{toc}{section}{Bibliography}
	\bibliographystyle{plain}
    
	\bibliography{Reference}

\end{document}